\newcommand{\cB}{\mathcal{B}}
\newcommand{\cE}{\mathcal{E}}
\newcommand{\cJ}{\mathcal{J}}
\newcommand{\cK}{\mathcal{K}}
\newcommand{\cQ}{\mathcal{Q}}
\newcommand{\C}{\mathbb{C}}
\newcommand{\N}{\mathbb{N}}
\newcommand{\R}{\mathbb{R}}
\newcommand{\eps}{\varepsilon}
\newcommand{\norm}[1]{\left\Vert#1\right\Vert}
\newcommand{\abs}[1]{\left\vert#1\right\vert}
\newtheorem{theorem}{Theorem}[section]
\newtheorem{lemma}[theorem]{Lemma}
\newtheorem{corollary}[theorem]{Corollary}
\newproof{proof}{Proof}
\numberwithin{equation}{section}
\begin{document}
\title{On singular integral operators
with semi-almost periodic coefficients
on variable Lebesgue spaces}

\author[AK]{Alexei Yu. Karlovich\corref{Karl}}
\ead{oyk@fct.unl.pt}

\author[IS]{Ilya M. Spitkovsky}
\ead{ilya@math.wm.edu}

\cortext[Karl]{Corresponding author}

\address[AK]{%%
Departamento de Matem\'atica,
Faculdade de Ci\^encias e Tecnologia,
Universidade Nova de Lisboa,
Quinta da Torre,
2829--516 Caparica,
Portugal}

\address[IS]{%%
Department of Mathematics,
College of William \& Mary,
Williamsburg, VA, 23187-8795,
U.S.A.}

\begin{abstract}
Let $a$ be a semi-almost periodic matrix function with the almost
periodic representatives $a_l$ and $a_r$ at $-\infty$ and
$+\infty$, respectively. Suppose $p:\R\to(1,\infty)$ is a slowly
oscillating exponent such that the Cauchy singular integral
operator $S$ is bounded on the variable Lebesgue space
$L^{p(\cdot)}(\R)$. We prove that if the operator $aP+Q$ with
$P=(I+S)/2$ and $Q=(I-S)/2$ is Fredholm on the variable Lebesgue
space $L_N^{p(\cdot)}(\R)$, then the operators $a_lP+Q$ and
$a_rP+Q$ are invertible on standard Lebesgue spaces
$L_N^{q_l}(\R)$ and $L_N^{q_r}(\R)$ with some exponents $q_l$ and
$q_r$ lying in the segments between the lower and the upper limits
of $p$ at $-\infty$ and $+\infty$, respectively.
\end{abstract}

\begin{keyword}
Almost-periodic function \sep
semi-almost periodic function \sep
slowly oscillating function \sep
variable Lebesgue space \sep
singular integral operator \sep
Fredholmness \sep
invertibility
\end{keyword}

\maketitle

%%%%%%%%%%%%%%%%%%%%%%%%%%%%%%%%%%%%%%%%%%%%%%%%%%%%%%%%%%%%%%%%%%%%%%%%%%%
\section{Introduction}
Given a Banach space $X$, we denote by $X_N$ the Banach space of
all columns of height $N$ with components in $X$; the norm in
$X_N$ is defined by
\[
\|(x_1,\dots,x_N)^\mathrm{T}\|_{X_N}=\left(\sum_{\alpha=1}^N\|x_\alpha\|_X^2\right)^{1/2}.
\]
Given a subalgebra $B$ of $L^\infty(\R)$, we denote by $B_{N\times
N}$ the algebra of all $N\times N$ matrices with entries in $B$;
we equip $B_{N\times N}$ with the norm
\[
\|a\|_{B_{N\times N}}=\|(a_{\alpha\beta})_{\alpha,\beta=1}^N\|_{B_{N\times N}}=
\left(\sum_{\alpha,\beta=1}^N\|a_{\alpha\beta}\|_B^2\right)^{1/2}.
\]
Let $\cB(X)$ denote the Banach algebra of all
bounded linear operators on $X$ and let $\cK(X)$ denote the ideal of all
compact operators on $X$. As usual, $A^*$ denotes the adjoint operator
of $A\in\cB(X)$. An operator $A\in\cB(X)$ is said to be Fredholm on $X$
if its image $\operatorname{Im}A$ is closed
in $X$ and
\[
\dim\operatorname{Ker}A<\infty,
\quad
\dim(X/\overline{\operatorname{Im}A})<\infty.
\]

We denote by $C(\overline{\R})$ the set of all complex-valued
continuous functions $c$ on $\R$ which have finite limits $c(-\infty)$
and $c(+\infty)$ at $-\infty$ and $+\infty$. Let $C(\dot{\R})$ be the set
of all functions $c\in C(\overline{\R})$ such that $c(-\infty)=c(+\infty)$.
An almost-periodic polynomial is a function of the form
%%%
%\begin{equation}\label{a}
\[
a(x)=\sum_{j=1}^m a_j e^{i\lambda_j x} \quad
(x\in\R)\quad\mbox{with}\quad a_j\in\C,\quad\lambda_j\in\R.
\]
%\end{equation}
%%%
The set of all almost-periodic polynomials will be denoted by
$AP^0$. The algebra $AP$ of the continuous almost-periodic
functions is defined as the closure of $AP^0$ in $L^\infty(\R)$;
its closure with respect to a stronger {\em Wiener} norm
$\norm{a}_W:=\sum\abs{a_j}$ is the algebra $APW$. Note that $APW$
is dense in $AP$. Finally, the algebra $SAP$ of the
semi-almost-periodic functions is the smallest closed subalgebra
of $L^\infty(\R)$ containing $C(\overline{\R})\cup AP$. The
algebra $SAP$ was introduced by Sarason \cite{Sarason77}, who also
showed that every $a\in SAP_{N\times N}$ can be written in the
form
\[
a=(1-u)a_l+ua_r+a_0,
\]
where $u\in C(\overline{\R})$ is any fixed function such that
$0\le u\le 1$, $u(-\infty)=0$, $u(+\infty)=1$, $a_l$ and $a_r$
belong to $AP_{N\times N}$, and $a_0$ is in $[C_0]_{N\times N}$,
the set of all continuous matrix functions vanishing at $-\infty$
and $+\infty$. Moreover, $a_l$ and $a_r$ are uniquely determined
by $a$ and the maps $a\mapsto a_l$ and $a\mapsto a_r$ are
$C^*$-algebra homomorphisms of $SAP_{N\times N}$ onto $AP_{N\times
N}$. The matrix functions $a_l$ and $a_r$ are referred to as the
almost-periodic representatives of $a$ at $-\infty$ and $+\infty$,
respectively (for $N=1$, see \cite[Theorem~1.21]{BKS02}; for $N>1$,
the proof is the same).

For a continuous function $f:\R\to\C$ and $J\subset\R$, let
\[
\operatorname{osc}(f,J):=\sup_{t,\tau\in J}|f(t)-f(\tau)|.
\]
Following \cite{Power80}, we denote by $SO$ the class of slowly
oscillating functions given by
\[
SO:=\left\{f\in C(\R) \colon\lim_{x\to+\infty}\operatorname{osc}(f,[-2x,-x]\cup[x,2x])=0\right\}\cap
L^\infty(\R).
\]
Clearly, $SO$ is a unital $C^*$-subalgebra of $L^\infty(\R)$ that contains $C(\dot{\R})$.

Let $p\colon\R\to[1,\infty]$ be a measurable a.e. finite function. By
$L^{p(\cdot)}(\R)$ we denote the set of all complex-valued functions
$f$ on $\R$ such that
\[
I_{p(\cdot)}(f/\lambda):=\int_\R |f(x)/\lambda|^{p(x)} dx <\infty
\]
for some $\lambda>0$. This set becomes a Banach space when
equipped with the norm
\[
\|f\|_{p(\cdot)}:=\inf\big\{\lambda>0: I_{p(\cdot)}(f/\lambda)\le 1\big\}.
\]
It is easy to see that if $p$ is constant, then $L^{p(\cdot)}(\R)$ is nothing but
the standard Lebesgue space $L^p(\R)$. The space $L^{p(\cdot)}(\R)$
is referred to as a \textit{variable Lebesgue space}.
We will always suppose that
%%%
\begin{equation}\label{eq:exponents}
1<p_-:=\operatornamewithlimits{ess\,inf}_{x\in\R}p(x),
\quad
\operatornamewithlimits{ess\,sup}_{x\in\R}p(x)=:p_+<\infty.
\end{equation}
%%%
Under these conditions, the space $L^{p(\cdot)}(\R)$ is separable and reflexive,
its dual space is isomorphic to the space $L^{p'(\cdot)}(\R)$, where
\[
1/p(x)+1/p'(x)=1 \quad(x\in\R)
\]
(see, e.g., \cite{KR91}).

The Cauchy singular integral operator $S$ is defined for $f\in L^1_{\rm loc}(\R)$
by
\[
(Sf)(x):=\frac{1}{\pi i}\int_\R\frac{f(\tau)}{\tau-x}d\tau
\quad(x\in\R),
\]
where the integral is understood in the principal value sense. Assume that
$S$ generates a bounded operator on $L^{p(\cdot)}(\R)$ and put
\[
P:=(I+S)/2,
\quad
Q:=(I-S)/2.
\]
The operators $S,P$, and $Q$ are defined on $L_N^{p(\cdot)}(\R)$ elementwise.
If $a\in L^\infty_{N\times N}(\R)$, then the operator $aI$ of multiplication
by $a$ is bounded on $L_N^{p(\cdot)}(\R)$. We will say that
the operator $aP+Q$ with $a\in L_{N\times N}^\infty(\R)$
is a singular integral operator with the coefficient $a$.

A Fredholm criterion for Banach algebras of singular integral
operators with piecewise continuous coefficients on variable
Lebesgue spaces $L^{p(\cdot)}(\Gamma,w)$ over Carleson Jordan
curves with weights having finite sets of singularities were
obtained in \cite{K09,K10,K11} (see also the references therein).
The approach of these works is based on further developments of
the methods of the monograph \cite{BK97} based on localization
techniques, Wiener-Hopf factorization and heavy use of results and
methods from the theory of submultiplicative functions. An
alternative approach to Fredholm theory of singular integral
operators with piecewise continuous and slowly oscillating
coefficients is based on the method of limit operators and Mellin
pseudodifferential operators techniques (we refer to \cite{R96},
\cite[Section~10.6]{BK97}, \cite{BKR00} in the case of standard
Lebesgue spaces and to \cite{RS08,RS11} in the case of weighted
variable Lebesgue spaces). The second approach allows one to treat
the case of composed curves, but still not arbitrary composed
Carleson curves.

Notice that in all mentioned works coefficients are piecewise
continuous or slowly oscillating; and the variable exponent $p$ is
continuous and has a finite limit at infinity in the case of
unbounded curves. The aim of the present paper is to make the
first step beyond these hypotheses: we are going to study singular
integral operators $aP+Q$ with $a\in SAP_{N\times N}$ on variable
exponent spaces with the exponent $p$ which may not have a limit
at infinity.

Let $\cE$ denote the class of exponents $p:\R\to[1,\infty]$
satisfying \eqref{eq:exponents}, continuous on $\R$, and such that
the Cauchy singular integral operator is bounded on $L^{p(\cdot)}(\R)$.
First, we observe that this class contains interesting exponents.
%%%%%%%%%%%%%%%%%%%%%%%%%%%%%%%%%%%%%%%%%%%%%%%%%%%%%%%%%%%%%%%%%%%%%%%%%%%
\begin{lemma}\label{le:nontriviality}
There exists an exponent $p\in\cE$ such that $p\in SO\setminus C(\dot{\R})$.
\end{lemma}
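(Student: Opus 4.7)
The plan is to exhibit an explicit exponent. A natural candidate is
\[
p(x) := p_0 + \eps \sin\bigl(\log\log(3+|x|)\bigr), \qquad x\in\R,
\]
with fixed $p_0\in(1,\infty)$ and $\eps\in(0,p_0-1)$ sufficiently small, so that $p$ is continuous on $\R$ and \eqref{eq:exponents} is automatic.

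Two short estimates should establish that $p\in SO\setminus C(\dot\R)$. First, the derivative of $y\mapsto\sin(\log\log(3+|y|))$ has order $1/((3+|y|)\log(3+|y|))$ for $|y|$ large, so the mean value theorem gives $\operatorname{osc}(p,[x,2x])=O(1/\log x)$ as $x\to+\infty$. Since $p$ is even, the same estimate holds on $[-2x,-x]$, and the cross terms $|p(t)-p(\tau)|$ with $t\in[x,2x]$ and $\tau\in[-2x,-x]$ reduce via $p(\tau)=p(|\tau|)$ to the previous case; hence $\operatorname{osc}(p,[-2x,-x]\cup[x,2x])\to 0$ and $p\in SO$. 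Second, $\log\log(3+|x|)\to+\infty$ as $|x|\to\infty$ while $\sin$ has no limit at $+\infty$, so $p$ has no limit at either $+\infty$ or $-\infty$, and consequently $p\notin C(\overline{\R})\supset C(\dot\R)$.

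The only nontrivial step, and the expected main obstacle, is the verification that $p\in\cE$, i.e., that $S$ is bounded on $L^{p(\cdot)}(\R)$. The standard log-Hölder decay condition at infinity would require $\lim_{|x|\to\infty} p(x)$ to exist, which our construction deliberately rules out. I would therefore invoke a more permissive sufficient condition for the boundedness of $S$ on $L^{p(\cdot)}(\R)$ — one phrased in terms of oscillation at infinity rather than convergence — and exploit the freedom to take $\eps$ arbitrarily small: with $\eps\ll 1$, the exponent $p$ is arbitrarily close in $L^\infty$ to the constant $p_0$ (for which $S$ is bounded on $L^{p_0}(\R)$), and its $SO$ decay converts into smallness of whatever seminorm controls $S$ in the permissive criterion. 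This perturbative argument, together with the slow oscillation of $\sin\bigl(\log\log(3+|\cdot|)\bigr)$, is the substantive content of the lemma.
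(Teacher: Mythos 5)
Your verification that $p\in SO\setminus C(\dot{\R})$ is fine and essentially coincides with the paper's Lemma~\ref{le:Lerner-SO} (slow oscillation via the derivative estimate $x\,p'(x)=O(1/\log x)$, absence of a limit at infinity because $\sin(\log\log|x|)$ oscillates). But the crux of the lemma is precisely the claim you leave open: that $p\in\cE$, i.e.\ that $S$ is bounded on $L^{p(\cdot)}(\R)$. At this point your proposal has a genuine gap. You do not name any sufficient condition that applies; you only postulate the existence of ``a more permissive criterion phrased in terms of oscillation at infinity'' and argue perturbatively that smallness of $\eps$ (closeness of $p$ to the constant $p_0$ in $L^\infty$) together with the $SO$ decay should yield boundedness. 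No such general principle is available: smallness of $\|p-p_0\|_\infty$ does \emph{not} imply boundedness of $M$ or $S$ on $L^{p(\cdot)}(\R)$ --- the known counterexamples showing the sharpness of the log-H\"older decay condition at infinity involve exponents whose oscillation has arbitrarily small amplitude, so a purely perturbative argument in the sup-norm cannot close the gap, and ``slow oscillation'' by itself is not a known substitute for log-H\"older decay either. The boundedness for an exponent of this exact shape is a nontrivial theorem, not a routine estimate.

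The paper resolves exactly this point by citing Lerner's theorem (Theorem~\ref{th:Lerner}): for the specific exponent $p_L(x)=\alpha+\sin\big(\log(\log|x|)\chi_{\{|x|\ge e\}}(x)\big)$ with a suitable $\alpha>2$, the Hardy--Littlewood maximal operator is bounded on $L^{p_L(\cdot)}(\R)$; combined with Theorem~\ref{th:M-S} (boundedness of $M$ implies boundedness of $S$) this gives $p_L\in\cE$, and then Lemma~\ref{le:Lerner-SO} supplies the $SO\setminus C(\dot{\R})$ part. So your candidate function is essentially the right one (it is a rescaled version of Lerner's), but to complete your argument you would have to either invoke Lerner's result explicitly or reprove it, which is a substantial piece of harmonic analysis rather than the soft perturbation step your outline suggests.
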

%%%%%%%%%%%%%%%%%%%%%%%%%%%%%%%%%%%%%%%%%%%%%%%%%%%%%%%%%%%%%%%%%%%%%%%%%%%
Lerner \cite{L05} constructed an example of a variable exponent $p_L\notin C(\dot{\R})$
such that the Hardy-Littlewood maximal operator $M$ is bounded on $L^{p_L(\cdot)}(\R)$.
It is known that the boundedness of the Hardy-Littlewood
maximal operator implies the boundedness of the Cauchy singular integral
operator \cite{D05,DR03,KL05}. Thus $p_L\in\cE$. It turns out that $p_L\in SO$,
which gives the proof of Lemma~\ref{le:nontriviality}. All details of the proof
of Lemma~\ref{le:nontriviality} are contained in Section~\ref{sec:E-nontriviality}.

Our main result is the following.
%%%%%%%%%%%%%%%%%%%%%%%%%%%%%%%%%%%%%%%%%%%%%%%%%%%%%%%%%%%%%%%%%%%%%%%%%%%
\begin{theorem}[Main result]
\label{th:main}
Let $a\in SAP_{N\times N}$ and $p\in\cE\cap SO$.
If the operator $aP+Q$ is Fredholm on
the variable Lebesgue space $L_N^{p(\cdot)}(\R)$, then
\begin{enumerate}[(a)]
\item
there is an exponent $q_r$ lying in the segment
\[
\left[\liminf\limits_{x\to+\infty}p(x),\limsup\limits_{x\to+\infty}p(x)\right]
\]
such that $a_rP+Q$ is invertible on the standard
Lebesgue space $L_N^{q_r}(\R)$.

\item
there is an exponent $q_l$ lying in the segment
\[
\left[\liminf\limits_{x\to-\infty}p(x),\limsup\limits_{x\to-\infty}p(x)\right]
\]
such that $a_lP+Q$ is invertible on the standard
Lebesgue space $L_N^{q_l}(\R)$.
\end{enumerate}
\end{theorem}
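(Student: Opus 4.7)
The plan is to combine the Sarason decomposition of $SAP_{N\times N}$ with a translation/limit-operator argument that exploits the slow oscillation of $p$ at infinity; I sketch only part~(a), as (b) is entirely analogous with $h_n\to-\infty$. First, I would write $a=(1-u)a_l+ua_r+a_0$ as in the Sarason decomposition recalled above, fix any sequence $h_n\to+\infty$, and consider the translated coefficient $a(\cdot+h_n)$. Because $u(x+h_n)\to 1$ and $a_0(x+h_n)\to 0$ locally uniformly, and since the family $\{a_r(\cdot+h_n)\}$ is precompact in the uniform norm on $\R$ (a standard property of $AP$), a diagonal argument yields a subsequence---still denoted $\{h_n\}$---along which $a(\cdot+h_n)\to\widetilde{a}_r$ uniformly on every compact set, for some $\widetilde{a}_r\in AP_{N\times N}$ lying in the $AP$-orbit closure of $a_r$. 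Simultaneously, since $p\in SO$ the oscillation of $p$ on the dyadic annuli $[x,2x]$ tends to $0$ as $x\to+\infty$; hence, passing to a further subsequence if necessary, $p(\cdot+h_n)\to q_r$ uniformly on every compact set, where $q_r$ is a cluster value of $p(x)$ as $x\to+\infty$ and therefore automatically lies in the segment $[\liminf_{x\to+\infty}p(x),\limsup_{x\to+\infty}p(x)]$.

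Next, I would use translations as isometric isomorphisms between variable-exponent spaces. Let $V_h$ denote the operator $f(x)\mapsto f(x-h)$, which is an isometric isomorphism from $L^{p(\cdot+h)}(\R)$ onto $L^{p(\cdot)}(\R)$, commutes with $S$ (and hence with $P$ and $Q$), and satisfies $V_h^{-1}(aI)V_h=a(\cdot+h)I$. Therefore the Fredholmness of $aP+Q$ on $L_N^{p(\cdot)}(\R)$ is equivalent to the Fredholmness, with the same index, of $A_n:=a(\cdot+h_n)P+Q$ on $L_N^{p(\cdot+h_n)}(\R)$. The core step is to let $n\to\infty$: with $p(\cdot+h_n)\to q_r$ and $a(\cdot+h_n)\to\widetilde{a}_r$ uniformly on compact sets, a Fredholm-stability argument in the spirit of the limit-operator and Mellin pseudodifferential techniques of \cite{RS08,RS11} should yield that the limit operator $\widetilde{a}_rP+Q$ is Fredholm on the constant-exponent space $L_N^{q_r}(\R)$.

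To conclude, I would invoke the classical fact that a singular integral operator with an $AP_{N\times N}$ coefficient which is Fredholm on a standard Lebesgue space is automatically invertible there---a version of the Coburn--Simonenko phenomenon for almost periodic symbols, established through $AP$ factorization theory. Combined with the fact that invertibility of $bP+Q$ on $L_N^{q}(\R)$ depends only on the $AP$-orbit closure of $b\in AP_{N\times N}$, this upgrades the Fredholmness of $\widetilde{a}_rP+Q$ to the invertibility of $a_rP+Q$ on $L_N^{q_r}(\R)$, which is the desired conclusion. The main obstacle I anticipate is the rigorous execution of the Fredholm-stability limit step: one has to control the norms and indices of $A_n$ simultaneously with the variation of the underlying variable Lebesgue space $L_N^{p(\cdot+h_n)}(\R)$, and to argue that the family of translations provides a sufficient localization for $SAP$-coefficient singular integral operators at $+\infty$ (rather than, say, only identifying the Fredholm symbol modulo compact operators on the original fixed space).
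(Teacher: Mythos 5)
Your reduction to translated operators is fine as far as it goes: $V_h$ is an isometry of $L^{p(\cdot+h)}(\R)$ onto $L^{p(\cdot)}(\R)$ commuting with $S$, so $a(\cdot+h_n)P+Q$ is Fredholm on $L_N^{p(\cdot+h_n)}(\R)$, and your choice of $q_r$ as a cluster value of $p(h_n)$, with locally uniform convergence $p(\cdot+h_n)\to q_r$ coming from $p\in SO$, is exactly in the spirit of Lemma~\ref{le:SO-uniform}. The genuine gap is the step you yourself flag as the ``main obstacle'': the passage $n\to\infty$. There is no Fredholm-stability principle that transfers Fredholmness from the operators $A_n$ acting on the \emph{varying} spaces $L_N^{p(\cdot+h_n)}(\R)$ to a limit operator on $L_N^{q_r}(\R)$: the convergence of the exponents is only locally uniform, the spaces $L^{p(\cdot+h_n)}(\R)$ and $L^{q_r}(\R)$ are in general incomparable globally, and Fredholmness is not stable even under strong operator convergence on a fixed space. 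Nor can the machinery of \cite{RS08,RS11} be invoked as a black box: those works assume the exponent has a limit at infinity (essentially $p\in C(\dot{\R})$), which is precisely the hypothesis being dropped here. The paper never takes a limit of Fredholm operators at all. Instead it extracts left regularizers of $aP+Q$ and (via duality and Lemma~\ref{le:relations-sio}) of $a^*P+Q$, turns them into a priori estimates, suppresses the compact terms by composing with the cutoffs $\Psi_n$ (Lemma~\ref{le:convergence-compact-multiplication}), tests the resulting inequalities on $V_{h_m}\varphi$ with $\varphi\in[C_c^\infty(\R)]_N$, and then uses the key Lemma~\ref{le:key} (the $SO$-generalization of \cite[Proposition~6.3]{RS08}, which is the real technical heart) to convert them into positive lower bounds for the injection moduli of $a_rP+Q$ on $L_N^{q_r}(\R)$ and of its companion $P+a_r^*Q$ on $L_N^{q_r'}(\R)$; invertibility then follows from Lemma~\ref{le:injection-surjection-duality} and Theorem~\ref{th:Kurbatov}. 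Some argument of this kind is what your proposal is missing at its decisive point.

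A second, smaller issue is that your endgame leans on two further nontrivial inputs which the paper deliberately avoids: (i) that Fredholmness of $bP+Q$ with $b\in AP_{N\times N}$ on a standard space $L_N^{q}(\R)$ implies invertibility, and (ii) that invertibility depends only on the orbit closure of $b$, so that it can be transported from your cluster symbol $\widetilde{a}_r$ back to $a_r$. Point (ii) can be repaired using the recurrence property of almost periodic functions (each $b$ lies in the closed translation orbit of any of its orbit limits), but it is an extra argument you must supply; the paper sidesteps it by approximating $a_r$ with almost periodic polynomials and using Kronecker's theorem (Lemma~\ref{le:Kronecker}) to pick $h_m$ along which the translates converge back to the polynomial itself. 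Point (i) is a substantial theorem of $AP$ factorization theory whose matrix version on $L^q$ with $q\ne 2$ would need a careful reference or proof; the paper's a priori-estimate route makes it unnecessary, since it proves invertibility of $a_rP+Q$ directly rather than upgrading Fredholmness.
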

%%%%%%%%%%%%%%%%%%%%%%%%%%%%%%%%%%%%%%%%%%%%%%%%%%%%%%%%%%%%%%%%%%%%%%%%%%%
For standard Lebesgue spaces this result boils down to the
statement that Fredholmness of $aP+Q$ with $a\in SAP_{N\times N}$
on $L_N^p(\R)$ implies the invertibility of $a_rP+Q$, $a_lP+Q$ on
the same space $L_N^p(\R)$, and in this form was established in
\cite{K93} (see also its proof in \cite[Chap.~18]{BKS02}).

Note also that if $b\in APW_{N\times N}$, then the operator $bP+Q$
is invertible on all standard Lebesgue spaces $L^p_N(\R)$,
$1<p<\infty$, as soon as it is invertible on at least one of them
(see \cite[Section~18.1]{BKS02}). It is not known at the moment
whether this property persists for all $b\in AP_{N\times N}$. In
particular, we do not know whether in the setting of
Theorem~\ref{th:main} the operators $a_lP+Q$ and $a_rP+Q$ are
invertible on $L^{q_l}(\R)$ and $L^{q_r}(\R)$ for all $q_l$ and
$q_r$ in the segments between the lower and the upper limits of
$p$ at $-\infty$ and $+\infty$, respectively.

The proofs in \cite{K93,BKS02} are based on the method of limit
operators. The outline of this method is as follows. Let $h\in\R$
and $V_h$ be the translation operator given by
\[
(V_hf)(x):=f(x-h)\quad (x\in\R).
\]
It is well known that this operator is an isometry on every standard Lebesgue
space. Moreover, it commutes with the Cauchy singular integral operator $S$.
The method of limit operators consists in the study of the strong limits of $V_{-h_k}AV_{h_k}$
as $k\to\infty$ for a given operator $A$ and a given sequence $\{h_k\}_{k=1}^\infty$
tending to $+\infty$ or to $-\infty$. Typically, these strong limits
(if they exist) are simpler than the original operator $A$, but still
keep much information about $A$. For instance $V_{-h_k}KV_{h_k}$ tends
strongly to the zero operator for every compact operator $K$ on the standard
Lebesgue space and $V_{-h_k}(aP+Q)V_{h_k}$ tends strongly to $a_lP+Q$
for $h_k\to-\infty$ and to $a_rP+Q$ for $h_k\to+\infty$.
For a detailed discussion of the method of limit operators,
we refer to the monograph by Rabinovich, Roch, and Silbermann \cite{RRS04}.

On variable Lebesgue spaces $L^{p(\cdot)}(\R)$ the translation
operator $V_h$ is, in general, unbounded. So the method of the
proof of Theorem~\ref{th:main} presented in
\cite[Section~18.4]{BKS02} should be adjusted accordingly. To this
end, we combine ideas from \cite[Section~18.4]{BKS02} and
\cite{RS08} (see also \cite{RS11}). A key lemma concerns the
behavior of the sequence $\|V_{h_k}w_k\|_{p(\cdot)}$, where the
functions $w_k$ are nice (continuous and decaying faster than
$|x|$ as $|x|\to+\infty$): if $w_k$ converges to $w$ and $p(h_k)$
converges to $q\in(1,\infty)$, then $\|V_{h_k}w_k\|_{p(\cdot)}$
converges to the norm of $w$ on the standard Lebesgue space
$L^q(\R)$. This fact was proved by Rabinovich and Samko
\cite[Proposition~6.3]{RS08} for exponents having finite limits at
infinity; we relax this hypothesis and assume only that $p\in SO$.

The paper is organized as follows.
Section~\ref{sec:E-nontriviality} contains the proof of
Lemma~\ref{le:nontriviality}. In Section~\ref{sec:auxiliary} we
collect auxiliary material on Fredholmness, injectivity and
surjectivity moduli and their relation with invertibility, some
fundamental properties of variable Lebesgue spaces. Further, we
prove that $P$ and $Q$ are projections on variable Lebesgue spaces
and calculate the adjoint operator of $aP+bQ$ with $a,b\in
L_{N\times N}^\infty(\R)$. We prove that the sequence
$\Psi_n=K\chi_{\R\setminus[-n,n]}I$ converges uniformly whenever
$K$ is compact on $L_N^{p(\cdot)}(\R)$. We finish this section
with a property of slowly oscillating functions and an implicit
sequence lemma. Both statements play an important role in the proof
of the key lemma given in Section~\ref{sec:key}.

The final Section~\ref{sec:main-proof} is devoted to the proof of
Theorem~\ref{th:main}. Let us briefly outline its main steps.
First we approximate the operator $A=aP+Q$ by the operators
$A_j=a_jP+Q$ where $a_j$ has the same form as $a$, but with
polynomial almost-periodic representatives $a_l^{(j)}$ and
$a_l^{(j)}$ at $-\infty$ and $+\infty$, respectively. Since the
norm of $K\Psi_n$ is small whenever $n$ is large, from the
Fredholmness of $A$ we arrive at an a priori estimate
%%%
\begin{equation}\label{eq:intro-1}
\|\Psi_n f\|_{L_N^{p(\cdot)}(\R)}\le\mathrm{const}\|A_j\Psi_n f\|_{L_N^{p(\cdot)}(\R)}
\quad\mbox{for}\quad
f\in L_N^{p(\cdot)}(\R)
\end{equation}
%%%
and large fixed $j,n$. By the corollary of Kronecker's theorem there
exists a sequence $h_m\to+\infty$ such that
%%%
\begin{equation}\label{eq:intro-2}
\|a_r^{(j)}(\cdot+h_m)-a_r^{(j)}(\cdot)\|_{L_{N\times N}^\infty(\R)}\to 0
\quad\mbox{as}\quad m\to\infty.
\end{equation}
%%%
If $\varphi$ is smooth and compactly supported, $\varphi\in
[C_c^\infty(\R)]_N$, then $\Psi_nV_{h_m}\varphi=V_{h_m}\varphi$ for
large $m$. Hence \eqref{eq:intro-1} implies that
%%%
\begin{equation}\label{eq:intro-3}
\|V_{h_m}\varphi\|_{L_N^{p(\cdot)}(\R)}\le\mathrm{const}\|V_{h_m}(V_{-h_m}A_jV_{h_m}\varphi)\|_{L_N^{p(\cdot)}(\R)}
\quad\mbox{for}\quad\varphi\in [C_c^\infty(\R)]_N.
\end{equation}
%%%
Since the sequence $\{p(h_m)\}$ is bounded, we can extract its
subsequence $\{p(h_{m_k})\}$ that converges to a certain number
$q_r$. Taking into account \eqref{eq:intro-2}, we show that the
sequence $w_k=V_{-h_{m_k}}A_jV_{h_{m_k}}\varphi$ and the function
$w:=(a_r^{(j)}P+Q)\varphi$ satisfy the hypotheses of the key lemma.
Passing to the limit in
\eqref{eq:intro-3} along the subsequence $\{h_{m_k}\}$ as
$k\to\infty$, and then replacing $a_r^{(j)}$ by $a_r$, we arrive at
%%%
\begin{equation}\label{eq:intro-4}
\|\varphi\|_{L_N^{q_r}(\R)}\le\mathrm{const}\|(a_r P+Q)\varphi\|_{L_N^{q_r}(\R)}
\quad\mbox{for}\quad\varphi\in [C_c^\infty(\R)]_N.
\end{equation}
%%%
Applying duality arguments, we also obtain an a priori estimate
for the adjoint operator:
%%%
\begin{equation}\label{eq:intro-5}
\|\varphi\|_{L_N^{q_r'}(\R)}\le\mathrm{const}\|(a_r P+Q)^*\varphi\|_{L_N^{q_r'}(\R)}
\quad\mbox{for}\quad\varphi\in [C_c^\infty(\R)]_N
\end{equation}
%%%
where $q_r'=q_r/(q_r-1)$. Since $C_c^\infty(\R)$ is dense
both in $L^{p(\cdot)}(\R)$ and in its dual space
$L^{p'(\cdot)}(\R)$ whenever \eqref{eq:exponents} is fulfilled, from
\eqref{eq:intro-4}--\eqref{eq:intro-5} it follows that the operator
$a_rP+Q$ is invertible on $L_N^{q_r}(\R)$.
%%%%%%%%%%%%%%%%%%%%%%%%%%%%%%%%%%%%%%%%%%%%%%%%%%%%%%%%%%%%%%%%%%%%%%%%%%%
\section{Nontriviality of the class $\cE$}\label{sec:E-nontriviality}
\subsection{The Hardy-Littlewood maximal operator and the Cauchy singular integral operator}
Given $f\in  L^1_{\rm loc}(\R)$, the Hardy-Littlewood maximal
operator $M$ is defined by
\[
(Mf)(x) := \sup_{Q\ni x}\frac{1}{|Q|}\int_Q |f(y)|dy
\]
where the supremum is taken over all intervals $Q\subset\R$
containing $x$. From \cite[Theorem~4.8]{DR03} (see also
\cite[Theorem~2.7]{KL05}) and \cite[Theorem~8.1]{D05} one can
extract the following.
%%%%%%%%%%%%%%%%%%%%%%%%%%%%%%%%%%%%%%%%%%%%%%%%%%%%%%%%%%%%%%%%%%%%%%%%
\begin{theorem}\label{th:M-S}
Let $p:\R\to[1,\infty]$ be a measurable function satisfying \eqref{eq:exponents}.
If the Hardy-Littlewood maximal operator $M$ is bounded on $L^{p(\cdot)}(\R)$,
then the Cauchy singular integral operator $S$ is bounded on $L^{p(\cdot)}(\R)$.
\end{theorem}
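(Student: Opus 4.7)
The plan is to pointwise dominate the Cauchy singular integral by the Hardy--Littlewood maximal function and then invoke the hypothesis to transfer the $L^{p(\cdot)}(\R)$ bound. Two natural routes present themselves, both passing through the variable-exponent Calder\'on--Zygmund theory developed in the references cited by the authors.

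First I would try the Alvarez--P\'erez sharp maximal inequality: for every $\delta\in(0,1)$ and every $f$ in a suitable dense subclass,
$$M^\#_\delta(Sf)(x)\le C_\delta (Mf)(x),$$
where $M^\#_\delta g:=(M^\#(|g|^\delta))^{1/\delta}$. This is a purely pointwise statement that holds because the Hilbert kernel $1/[\pi i(\tau-x)]$ satisfies the usual Calder\'on--Zygmund size and smoothness bounds. The second ingredient is a variable-exponent Fefferman--Stein inequality
$$\|g\|_{p(\cdot)}\le C\|M^\#_\delta g\|_{p(\cdot)},$$
which should follow from the boundedness of $M$ on $L^{p(\cdot)}(\R)$ via Diening's local-to-global principle together with the standard good-$\lambda$ mechanism. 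Chaining the three estimates then yields $\|Sf\|_{p(\cdot)}\le C\|f\|_{p(\cdot)}$, after a routine approximation of general $f\in L^{p(\cdot)}(\R)$ by compactly supported functions (for which $Sf$ is a priori finite a.e., e.g.\ by interpolation with the classical weak $(1,1)$ bound).

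A more black-box alternative is Rubio de Francia extrapolation in the variable-exponent setting (Cruz-Uribe--Fiorenza--Martell--P\'erez): since $S$ satisfies the classical weighted estimate $\|Sf\|_{L^{p_0}(w)}\le C_{p_0,w}\|f\|_{L^{p_0}(w)}$ for every $p_0\in(1,\infty)$ and every Muckenhoupt weight $w\in A_{p_0}$, the boundedness of $M$ on $L^{p(\cdot)}(\R)$ extrapolates this bound directly to $\|Sf\|_{p(\cdot)}\le C\|f\|_{p(\cdot)}$, without any further pointwise work.

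The main obstacle is not the pointwise or weighted inequalities themselves, which are classical, but the passage from the single hypothesis ``$M$ bounded on $L^{p(\cdot)}(\R)$'' into either the Fefferman--Stein or the extrapolation framework. In particular one typically needs Diening's nontrivial self-duality theorem asserting that boundedness of $M$ on $L^{p(\cdot)}(\R)$ forces boundedness on $L^{p'(\cdot)}(\R)$ as well, a property which is not automatic for general Banach function spaces and whose proof is the hardest step in the background theory. Once that duality is in hand, either of the two routes above closes the argument essentially mechanically.
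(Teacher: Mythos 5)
Your proposal is correct and follows essentially the same route as the paper, which does not prove the theorem from scratch but extracts it from exactly the ingredients you name: the sharp-maximal/Calder\'on--Zygmund machinery of Diening--R\r{u}\v zi\v cka and Karlovich--Lerner (your first route) combined with Diening's theorem that boundedness of $M$ on $L^{p(\cdot)}(\R)$ implies its boundedness on $L^{p'(\cdot)}(\R)$, which you correctly single out as the crucial nonelementary step. The extrapolation alternative you mention is an equally valid packaging of the same ingredients, since it too reduces to that duality result.
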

%%%%%%%%%%%%%%%%%%%%%%%%%%%%%%%%%%%%%%%%%%%%%%%%%%%%%%%%%%%%%%%%%%%%%%%
Note that in the majority of papers dealing with the boundedness
of the Hardy-Littlewood maximal operator it is supposed that the
exponent has a finite limit at infinity (see, e.g.,
\cite{CFN03,CFN04,D04,H09,KS08} and the references therein). We
refer also to \cite{L10,N08}, where this condition was weakened
and to the recent monograph \cite{DHHR11} for the detailed
treatment of these questions.
%%%%%%%%%%%%%%%%%%%%%%%%%%%%%%%%%%%%%%%%%%%%%%%%%%%%%%%%%%%%%%%%%%%%%%%
\subsection{Lerner's example}
One interesting class of variable exponents such that $M$ is bounded
on $L^{p(\cdot)}(\R)$ was considered by Lerner \cite{L05}. Among other
things he proved the following.
%%%%%%%%%%%%%%%%%%%%%%%%%%%%%%%%%%%%%%%%%%%%%%%%%%%%%%%%%%%%%%%%%%%%%%%
\begin{theorem}[Lerner]
\label{th:Lerner}
There exists an $\alpha>2$ such that the Hardy-Littlewood maximal operator
$M$ is bounded on the variable Lebesgue space $L^{p_L(\cdot)}(\R)$ with
\[
p_L(x):=\alpha+\sin\big(\log(\log|x|)\chi_{\{x\in\R:|x|\ge e\}}(x)\big)
\quad(x\in\R).
\]
\end{theorem}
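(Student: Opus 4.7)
The plan is to verify that $p_L$ meets a sufficient condition for boundedness of the Hardy--Littlewood maximal operator on $L^{p_L(\cdot)}(\R)$. Since $p_L(x)=\alpha+\sin(\log\log|x|)$ oscillates as $|x|\to\infty$ and therefore has no limit at $\pm\infty$, the classical Diening-type criterion (local log-H\"older continuity combined with log-H\"older decay to a single exponent at infinity) is \emph{not} directly applicable, and this is precisely the obstacle the theorem is meant to overcome.

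First I would check that $p_L$ is locally log-H\"older continuous on $\R$, that is,
\[
|p_L(x)-p_L(y)|\le \frac{C}{\log(e+1/|x-y|)}\qquad(x,y\in\R).
\]
For $|x|,|y|\le e$ the function is constant, so there is nothing to prove. For $|x|,|y|\ge e$ a direct computation gives $|p_L'(x)|\le 1/(|x|\log|x|)$, and the mean value theorem then yields the stated modulus of continuity with a smooth gluing argument across $|x|=e$. Next I would verify $p_-=\alpha-1>1$, which uses $\alpha>2$; this is exactly the role of that threshold, ensuring enough room between $p_-$ and the critical exponent $1$ for the maximal operator.

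The heart of the proof is to dispense with the requirement of a limit at infinity. The idea is a dyadic-shell decomposition: write $\R=\bigcup_k\Delta_k$, where $\Delta_k=\{x:2^k\le |x|<2^{k+1}\}$ for large $k$, and note that $p_L$ has oscillation on $\Delta_k$ of order at most the derivative bound times $2^k$, namely $O(1/k)$, which tends to $0$. Hence on each shell $p_L$ is essentially constant with value $q_k\in[\alpha-1,\alpha+1]$. One then estimates
\[
\int_{\Delta_k}|Mf(x)|^{p_L(x)}\,dx
\le C\int_{\Delta_k}|Mf(x)|^{q_k}\,dx
+\text{small correction},
\]
using the local log-H\"older bound to pass from $p_L(x)$ to $q_k$ in the exponent, and then invoking the standard $L^{q_k}$-boundedness of $M$ (valid because $q_k>1$). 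Summing over $k$ and combining with a parallel bound for $|x|\le e$, where $p_L$ is constant, gives the modular inequality that translates into norm boundedness of $M$ on $L^{p_L(\cdot)}(\R)$.

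The main obstacle will be controlling the cross-terms produced by the identity $|Mf|^{p_L(x)}=|Mf|^{q_k}\cdot|Mf|^{p_L(x)-q_k}$: one needs that $|Mf|^{p_L(x)-q_k}$ is essentially $O(1)$ on $\Delta_k$ in a sense sufficient to preserve the modular estimate after summation over $k$. This is delicate precisely because there is no single $p_\infty$ to anchor the estimate; it is handled by using that the oscillation of $p_L$ on $\Delta_k$ decays like $1/k$ together with the fact that the maximal function satisfies $\log^+|Mf(x)|=O(\log|x|)$ on average, so that the exponent perturbation contributes only a bounded multiplicative factor per shell. Once the modular inequality is established, the theorem follows.
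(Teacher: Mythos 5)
There is a genuine gap here, and it sits exactly at the step you call the ``heart of the proof.'' Note first that the paper does not prove this statement at all: it is quoted from Lerner \cite{L05}, whose argument is of a different nature (a weighted-norm/BMO-multiplier type mechanism in which the largeness of $\alpha$, i.e.\ the smallness of the oscillation $\sin(\log\log|x|)$ relative to the constant level $\alpha$, is essential). Your sketch uses $\alpha>2$ only to get $p_-=\alpha-1>1$; if it worked, it would prove boundedness of $M$ for \emph{every} $\alpha>2$, and indeed for essentially any locally log-H\"older exponent whose oscillation on dyadic shells tends to zero. That is a much stronger statement than Lerner's ``there exists an $\alpha>2$,'' and it is not a known sufficient condition; this mismatch already signals that the mechanism is insufficient.

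The concrete failure is the nonlocality of $M$ in your shell-by-shell estimate. Freezing the exponent at $q_k$ on $\Delta_k$ (which is fine pointwise, since $\operatorname{osc}(p_L,\Delta_k)=O(1/k)$) leaves you with $\int_{\Delta_k}(Mf)^{q_k}\,dx$, but $Mf$ on $\Delta_k$ is driven by the values of $f$ on all of $\R$, so the ``standard $L^{q_k}$-boundedness of $M$'' only gives $\int_{\Delta_k}(Mf)^{q_k}\,dx\le C\int_{\R}|f|^{q_k}\,dx$. These right-hand sides involve the global $q_k$-modular of $f$, which is not controlled by the variable modular $\int_\R|f|^{p_L(x)}\,dx$ (on distant shells $p_L(y)$ differs from $q_k$ by an amount of order $1$, since $\sin(\log\log|y|)$ drifts by a full unit between scales $2^k$ and roughly $2^{k^e}$), and summing such bounds over infinitely many $k$ diverges in any case. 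The standard way around this in the constant-limit situation is to compare with a single $p_\infty$ and absorb the exponent error into integrable tails using $|p(x)-p_\infty|\le C/\log(e+|x|)$; here there is no anchor $p_\infty$, and precisely this inter-shell interaction is what you dismiss as ``small correction'' and ``cross-terms,'' with the unproved claim that $\log^+|Mf(x)|=O(\log|x|)$ ``on average.'' Without a genuine argument controlling the contribution to $Mf$ on $\Delta_k$ coming from mass of $f$ located where the exponent differs by order one --- which is where Lerner's use of a large $\alpha$ and weighted/oscillation techniques enters --- the modular inequality you aim for is not established, so the proposal does not prove the theorem.
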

%%%%%%%%%%%%%%%%%%%%%%%%%%%%%%%%%%%%%%%%%%%%%%%%%%%%%%%%%%%%%%%%%%%%%%%%%%%
\begin{lemma}\label{le:Lerner-SO}
The exponent $p_L$ satisfies \eqref{eq:exponents} and belongs to $SO\setminus C(\dot{\R})$.
\end{lemma}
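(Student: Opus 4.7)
The plan is to verify the three separate claims—that $p_L$ satisfies \eqref{eq:exponents}, that $p_L\in SO$, and that $p_L\notin C(\dot{\R})$—by elementary estimates on the composition $\sin\circ\log\circ\log$. I would first dispatch \eqref{eq:exponents}: since $\alpha>2$ and $\sin$ takes values in $[-1,1]$, one has $\alpha-1\le p_L(x)\le\alpha+1$ pointwise on $\R$, so $p_-\ge\alpha-1>1$ and $p_+\le\alpha+1<\infty$. Continuity on $\R$ is automatic on $\{|x|>e\}$ and on $\{|x|<e\}$; at $|x|=e$ it follows because $\log\log|x|\to 0$ as $|x|\downarrow e$ and $\sin(0)=0$, which matches the value $p_L=\alpha$ coming from the region where the characteristic function vanishes.

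Next I would handle $p_L\in SO$. Since $\sin$ is $1$-Lipschitz, for $s,t$ with $|s|,|t|\ge e$,
\[
|p_L(s)-p_L(t)|\le\bigl|\log\log|s|-\log\log|t|\bigr|.
\]
Given $x\ge e$ and arbitrary $s,t\in[-2x,-x]\cup[x,2x]$, both $|s|,|t|$ lie in $[x,2x]$, so applying the mean value theorem to the function $u\mapsto\log\log u$ on $[x,2x]$ (whose derivative is $1/(u\log u)$) yields
\[
\bigl|\log\log|s|-\log\log|t|\bigr|\le\frac{|\,|s|-|t|\,|}{x\log x}\le\frac{x}{x\log x}=\frac{1}{\log x}.
\]
Hence $\operatorname{osc}(p_L,[-2x,-x]\cup[x,2x])\le 1/\log x\to 0$ as $x\to+\infty$, which together with the boundedness and continuity already established places $p_L$ in $SO$.

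For $p_L\notin C(\dot{\R})$ it suffices to observe that $p_L$ fails to have a limit at $+\infty$: as $x\to+\infty$, $\log\log x\to+\infty$, and along the sequences $x_n=\exp(\exp(\pi n))$ and $y_n=\exp(\exp(\pi n+\pi/2))$ one gets $\sin(\log\log x_n)=0$ while $\sin(\log\log y_n)=\pm 1$, so $p_L$ oscillates at $+\infty$. Therefore $p_L\notin C(\overline{\R})$, and a fortiori $p_L\notin C(\dot{\R})$.

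There is no serious obstacle here; the only points requiring any care are verifying continuity at the gluing points $|x|=e$ and choosing a matching pair of sequences at infinity to rule out a limit. The key quantitative ingredient—the $1/\log x$ bound on the oscillation—is exactly what makes the slowly oscillating property work despite the non-existence of a limit at infinity, and it is this contrast that gives $p_L\in SO\setminus C(\dot{\R})$.
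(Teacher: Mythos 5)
Your proof is correct, and for the central point---membership in $SO$---you take a somewhat different route from the paper. The paper verifies that $\lim_{x\to+\infty} x\,p_L'(x)=\lim_{x\to+\infty}\cos(\log\log x)/\log x=0$ and then invokes the known criterion (cited from Bastos--Bravo--Karlovich) that this derivative condition implies slow oscillation; you instead prove the oscillation bound directly, using the $1$-Lipschitz property of $\sin$, the evenness of $p_L$, and the mean value theorem for $u\mapsto\log\log u$ to get the explicit estimate $\operatorname{osc}\bigl(p_L,[-2x,-x]\cup[x,2x]\bigr)\le 1/\log x$. Your argument is thus self-contained and quantitative where the paper's is shorter but relies on an external criterion; the two are of course closely related, since your $1/\log x$ bound is just the integrated form of the paper's bound on $x\,p_L'(x)$. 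For the remaining claims both proofs are essentially identical: the bounds $\alpha-1\le p_L\le\alpha+1$ with $\alpha>2$ give \eqref{eq:exponents}, and the failure of a limit at $+\infty$ (which you make explicit with the sequences $x_n=\exp(\exp(\pi n))$, $y_n=\exp(\exp(\pi n+\pi/2))$, while the paper simply records $\liminf=\alpha-1<\alpha+1=\limsup$) gives $p_L\notin C(\dot{\R})$; your extra check of continuity at the gluing points $|x|=e$ is a harmless added detail the paper leaves implicit.
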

%%%%%%%%%%%%%%%%%%%%%%%%%%%%%%%%%%%%%%%%%%%%%%%%%%%%%%%%%%%%%%%%%%%%%%%%%%%
\begin{proof}
It is clear that $p_L\in C(\R)$ and $p_L$ is even. Moreover,
\[
\lim_{x\to+\infty}x\frac{dp_L(x)}{dx}=\lim_{x\to+\infty}\frac{\cos(\log(\log x))}{\log x}=0.
\]
Then (see, e.g., \cite[p.~154--155 and p.~158]{BBK04}) $p_L\in SO$.
Obviously,
\[
\liminf_{x\to+\infty}p_L(x)=\inf_{x\in\R}p_L(x)=\alpha-1>1,
\quad
\limsup_{x\to+\infty}p_L(x)=\sup_{x\in\R}p_L(x)=\alpha+1<\infty.
\]
Thus $p_L$ satisfies \eqref{eq:exponents} and  $p_L\notin C(\dot{\R})$.
\qed
\end{proof}
%%%%%%%%%%%%%%%%%%%%%%%%%%%%%%%%%%%%%%%%%%%%%%%%%%%%%%%%%%%%%%%%%%%%%%%%%%%
Lemma~\ref{le:nontriviality} follows from
Theorems~\ref{th:M-S}--\ref{th:Lerner} and
Lemma~\ref{le:Lerner-SO}.
%%%%%%%%%%%%%%%%%%%%%%%%%%%%%%%%%%%%%%%%%%%%%%%%%%%%%%%%%%%%%%%%%%%%%%%%%%%
\section{Auxiliary results}
\label{sec:auxiliary}
%%%%%%%%%%%%%%%%%%%%%%%%%%%%%%%%%%%%%%%%%%%%%%%%%%%%%%%%%%%%%%%%%%%%%%%%%%%
\subsection{Fredholmness}
Recall the following well known fact, which follows from Atkinson's
theorem (see, e.g., \cite[Chap.~4, Theorem~6.1]{GK92}).
%%%%%%%%%%%%%%%%%%%%%%%%%%%%%%%%%%%%%%%%%%%%%%%%%%%%%%%%%%%%%%%%%%%%%%%%%%%
\begin{lemma}\label{le:Atkinson}
Let $X$ be a Banach space and $A,B\in\cB(X)$. If $A$ is Fredholm on $X$ and
$B$ is invertible on $X$, then $AB$ and $BA$ are Fredholm on $X$.
\end{lemma}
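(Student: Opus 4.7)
The plan is to invoke Atkinson's theorem in both directions: if $A$ is Fredholm on $X$, there exists a regularizer $R\in\cB(X)$ such that both $AR-I$ and $RA-I$ are compact, and conversely any operator with such a regularizer is Fredholm. Since $B$ is invertible, multiplying by $B^{\pm 1}$ preserves this property because $\cK(X)$ is a two-sided ideal in $\cB(X)$.

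For $AB$, I would take $R_1:=B^{-1}R$ as a candidate regularizer. Then
\[
(AB)R_1 = A(BB^{-1})R = AR = I + K_1,
\qquad
R_1(AB) = B^{-1}(RA)B = I + B^{-1}K_2 B,
\]
where $K_1=AR-I$ and $K_2=RA-I$ belong to $\cK(X)$. Since $\cK(X)$ is a two-sided ideal, $B^{-1}K_2 B\in\cK(X)$, so by the converse direction of Atkinson's theorem, $AB$ is Fredholm. A symmetric argument with $R_2:=RB^{-1}$ as regularizer of $BA$ shows that $BA$ is Fredholm: $(BA)R_2=B(I+K_1)B^{-1}=I+BK_1 B^{-1}$ and $R_2(BA)=RA=I+K_2$.

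There is really no obstacle here; the lemma is a direct and well-known consequence of Atkinson's characterization of Fredholm operators, and the only input used beyond it is the fact that $\cK(X)$ is closed under left and right multiplication by bounded operators. I would keep the proof at this level of brevity, simply recording the regularizers $B^{-1}R$ and $RB^{-1}$ and citing the same reference \cite[Chap.~4, Theorem~6.1]{GK92} that is already used for Atkinson's theorem itself.
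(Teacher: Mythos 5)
Your proof is correct and takes the same route the paper intends: the paper states this lemma without proof as an immediate consequence of Atkinson's theorem (citing \cite[Chap.~4, Theorem~6.1]{GK92}), and your explicit regularizers $B^{-1}R$ and $RB^{-1}$, together with the fact that $\cK(X)$ is a two-sided ideal, are exactly the standard details behind that assertion.
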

%%%%%%%%%%%%%%%%%%%%%%%%%%%%%%%%%%%%%%%%%%%%%%%%%%%%%%%%%%%%%%%%%%%%%%%%%%%
The next statement is about Fredholmness of adjoints.
%%%%%%%%%%%%%%%%%%%%%%%%%%%%%%%%%%%%%%%%%%%%%%%%%%%%%%%%%%%%%%%%%%%%%%%%%%%
\begin{theorem}[{see, e.g., \cite[Section~4.15]{GK92}}]
\label{th:Fredholm-duality}
Let $X$ be a Banach space and $A\in\cB(X)$. Then $A$ is Fredholm on $X$
if and only if its adjoint $A^*$ is Fredholm on the dual space $X^*$.
\end{theorem}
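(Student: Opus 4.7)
The plan is to reduce the statement to invertibility modulo compact operators via Atkinson's theorem: $A\in\cB(X)$ is Fredholm on $X$ if and only if there exist $B\in\cB(X)$ and $K_1,K_2\in\cK(X)$ with $BA=I+K_1$ and $AB=I+K_2$. Combined with Schauder's theorem ($K\in\cK(X)\Rightarrow K^*\in\cK(X^*)$) and Banach's closed range theorem, this characterization handles both directions.

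For the implication ($\Rightarrow$), I would pick a regularizer $B$ of $A$ as above, pass to Banach adjoints in both relations to obtain $A^*B^*=I+K_1^*$ and $B^*A^*=I+K_2^*$ on $X^*$, and apply Schauder's theorem to conclude that $K_1^*,K_2^*\in\cK(X^*)$. Atkinson's theorem, applied now on $X^*$, then yields Fredholmness of $A^*$.

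For the implication ($\Leftarrow$), I would use the standard duality identities $\operatorname{Ker}A^*=(\operatorname{Im}A)^\perp$ and, provided $\operatorname{Im}A$ is closed, $\operatorname{Im}A^*=(\operatorname{Ker}A)^\perp$, together with Banach's closed range theorem. Assuming $A^*$ is Fredholm on $X^*$, $\operatorname{Im}A^*$ is closed and so by Banach's theorem $\operatorname{Im}A$ is closed as well; the first identity then gives $\operatorname{codim}\operatorname{Im}A=\dim\operatorname{Ker}A^*<\infty$, while the second, via the natural isomorphism $X^*/(\operatorname{Ker}A)^\perp\cong(\operatorname{Ker}A)^*$, yields $\dim\operatorname{Ker}A=\operatorname{codim}\operatorname{Im}A^*<\infty$. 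Hence $A$ is Fredholm on $X$. The main obstacle is precisely this closed-range step: without Banach's theorem one cannot transfer closedness of $\operatorname{Im}A^*$ back to closedness of $\operatorname{Im}A$, and absent closed range the codimension equality $\operatorname{codim}\operatorname{Im}A=\dim\operatorname{Ker}A^*$ need not hold. An alternative route would be to apply ($\Rightarrow$) twice to deduce Fredholmness of $A^{**}$ on $X^{**}$ and then descend via the canonical embedding $X\hookrightarrow X^{**}$, but controlling $\operatorname{Ker}A$ and $\operatorname{codim}\operatorname{Im}A$ from their double-dual counterparts requires essentially the same duality input.
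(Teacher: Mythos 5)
The paper does not prove this theorem at all: it is imported as a known result with the reference \cite{GK92}, so there is no internal argument to compare against. Your proposal is a correct rendition of the standard textbook proof: the forward direction via a two-sided regularizer (Atkinson's characterization, which is the paper's Theorem~\ref{th:regularization}) together with Schauder's theorem that adjoints of compact operators are compact, and the converse via Banach's closed range theorem plus the annihilator identities $\operatorname{Ker}A^*=(\operatorname{Im}A)^\perp$ and $\operatorname{Im}A^*=(\operatorname{Ker}A)^\perp$, with the codimension counts $\operatorname{codim}\operatorname{Im}A=\dim\operatorname{Ker}A^*$ and $\dim\operatorname{Ker}A=\operatorname{codim}\operatorname{Im}A^*$. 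You also correctly identify the one delicate point, namely that closedness of $\operatorname{Im}A$ must first be recovered from closedness of $\operatorname{Im}A^*$ before those identities may be used, so the argument is complete as outlined.
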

%%%%%%%%%%%%%%%%%%%%%%%%%%%%%%%%%%%%%%%%%%%%%%%%%%%%%%%%%%%%%%%%%%%%%%%%%%%
Let $A\in\cB(X)$. An operator $R\in\cB(X)$ is said to be a left (resp. right)
regularizer of $A$ if $RA-I\in\cK(X)$ (resp. $AR-I\in\cK(X)$). If $R$ is
a left and right regularizer of $A$, then we say that $R$ is a two-sided
regularizer of $A$.
%%%%%%%%%%%%%%%%%%%%%%%%%%%%%%%%%%%%%%%%%%%%%%%%%%%%%%%%%%%%%%%%%%%%%%%%%%%
\begin{theorem}[{see, e.g., \cite[Chap.~4, Theorem~7.1]{GK92}}]
\label{th:regularization}
Let $X$ be a Banach space.
An operator $A\in\cB(X)$ is Fredholm on $X$ if and only if there exists
a two-sided regularizer of $A$.
\end{theorem}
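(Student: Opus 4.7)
The plan is to prove both directions of this (classical) form of Atkinson's theorem by direct constructions, relying only on the open mapping theorem, the Riesz--Schauder theory of $I+K$ for compact $K$, and the standard fact that every finite-dimensional subspace, and every closed subspace of finite codimension, of a Banach space admits a closed complement.

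For the necessity direction I would assume $A$ is Fredholm and build $R$ explicitly. Since $\dim\ker A<\infty$, there is a closed complement $X_1$ with $X=\ker A\oplus X_1$; since $\operatorname{Im}A$ is closed with finite codimension, there is a finite-dimensional complement $Y$ with $X=\operatorname{Im}A\oplus Y$. The restriction $A_0:=A|_{X_1}:X_1\to\operatorname{Im}A$ is a continuous bijection between Banach spaces, so by the open mapping theorem $A_0^{-1}$ is bounded. Setting $R:=A_0^{-1}P_1$, where $P_1\in\cB(X)$ is the projection onto $\operatorname{Im}A$ along $Y$, a short computation gives $AR=I-P_Y$ and $RA=I-P_{\ker A}$, and both $P_Y$ and $P_{\ker A}$ are finite-rank, hence compact. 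Thus $R$ is a two-sided regularizer of $A$.

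For sufficiency, suppose $R\in\cB(X)$ satisfies $RA=I+K_1$ and $AR=I+K_2$ with $K_1,K_2\in\cK(X)$. Then $\ker A\subset\ker(I+K_1)$, and the Riesz--Schauder theorem gives $\dim\ker(I+K_1)<\infty$, whence $\dim\ker A<\infty$. Similarly $\operatorname{Im}(I+K_2)\subset\operatorname{Im}A$, and $\operatorname{Im}(I+K_2)$ is closed and of finite codimension in $X$. Since $\operatorname{Im}A/\operatorname{Im}(I+K_2)$ embeds into the finite-dimensional quotient $X/\operatorname{Im}(I+K_2)$, it follows that $\operatorname{Im}A$ is itself closed and of finite codimension. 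Hence $A$ is Fredholm.

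No step presents a genuine obstacle, which is unsurprising since the result is the cornerstone of Fredholm theory. The two points that deserve attention are the appeal to the open mapping theorem to produce the bounded inverse $A_0^{-1}$ in the necessity part, and the argument in the sufficiency part that a subspace of $X$ containing a closed finite-codimensional subspace must itself be closed and of finite codimension; both are entirely standard, and this is why the authors content themselves with citing \cite[Chap.~4, Theorem~7.1]{GK92}.
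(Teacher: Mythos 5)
Your proof is correct, and the paper itself offers no argument for this statement: it is quoted as a classical result with a reference to \cite{GK92}, so there is nothing to diverge from. Your two-step argument (constructing the regularizer $R=A_0^{-1}P_1$ from the complemented decompositions $X=\ker A\oplus X_1$ and $X=\operatorname{Im}A\oplus Y$, and conversely deducing Fredholmness from the Riesz--Schauder properties of $I+K_1$, $I+K_2$) is precisely the standard textbook proof of Atkinson's theorem that the cited source gives.
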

%%%%%%%%%%%%%%%%%%%%%%%%%%%%%%%%%%%%%%%%%%%%%%%%%%%%%%%%%%%%%%%%%%%%%%%%%%%
\subsection{Injection and surjection moduli}
Let $A\in\cB(X)$. Following \cite[Sections~B.3.1 and B.3.4]{P80}, consider its injection
modulus
\[
\cJ(A;X):=\sup\big\{c\ge 0: \ \|Af\|_X\ge c \|f\|_X\text{ for all } f\in X\big\}
\]
and its surjection modulus
\[
\cQ(A;X):=\sup\big\{c\ge 0: \ cB_X\subset AB_X\big\}
\]
where $B_X$ is the closed unit ball of $X$. Sometimes these
characteristics are also called lower norms of $A$ (see, e.g.,
\cite[Section~1.3]{Ku99}). Fundamental properties of the injection and
surjection moduli are collected in the following statements.
%%%%%%%%%%%%%%%%%%%%%%%%%%%%%%%%%%%%%%%%%%%%%%%%%%%%%%%%%%%%%%%%%%%%%%%%%%%
\begin{lemma}[{see, e.g., \cite[Section~B.3.8]{P80}}]
\label{le:injection-surjection-duality}
If $A\in\cB(X)$, then
\[
\cJ(A;X)=\cQ(A^*;X^*),
\quad
\cQ(A;X)=\cJ(A^*;X^*).
\]
\end{lemma}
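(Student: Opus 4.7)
The plan is to prove the two identities separately, in each case splitting into two inequalities whose "direct" half comes from computing $\|\cdot\|_X$ as $\sup_{g^*\in B_{X^*}}|g^*(\cdot)|$, and whose "reverse" half requires Hahn--Banach.

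For the first identity $\cJ(A;X)=\cQ(A^*;X^*)$, I would start from $\|Af\|_X\geq c\|f\|_X$ for all $f\in X$. Then $A$ is injective with closed range, so for any $g^*\in X^*$ the formula $\ell(Af):=g^*(f)$ defines a bounded linear functional on $\operatorname{Im}A$ of norm at most $\|g^*\|/c$. Hahn--Banach extends $\ell$ to some $\tilde\ell\in X^*$ with $\|\tilde\ell\|\leq\|g^*\|/c$, and then $A^*\tilde\ell=g^*$; this yields $cB_{X^*}\subset A^*B_{X^*}$, hence $\cQ(A^*;X^*)\geq c$. For the opposite inequality, if $cB_{X^*}\subset A^*B_{X^*}$ then for every $f\in X$ and $g^*\in B_{X^*}$ one picks $h^*\in B_{X^*}$ with $cg^*=A^*h^*$, and $c|g^*(f)|=|h^*(Af)|\leq\|Af\|$; taking the supremum over $g^*$ gives $c\|f\|\leq\|Af\|$.

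The second identity $\cQ(A;X)=\cJ(A^*;X^*)$ has an easy direction and a harder one. The easy direction assumes $cB_X\subset AB_X$ and uses
\[
c\|g^*\|=\sup_{f\in cB_X}|g^*(f)|\leq\sup_{h\in B_X}|g^*(Ah)|=\sup_{h\in B_X}|(A^*g^*)(h)|=\|A^*g^*\|,
\]
so $\cJ(A^*;X^*)\geq c$. For the harder direction, assuming $\|A^*g^*\|\geq c\|g^*\|$ for all $g^*\in X^*$, the identity $\sup_{f\in AB_X}|g^*(f)|=\|A^*g^*\|$ combined with the assumption gives $\sup_{f\in AB_X}|g^*(f)|\geq\sup_{f\in cB_X}|g^*(f)|$ for every $g^*$; since $AB_X$ is convex and balanced, the bipolar theorem then forces $cB_X\subset\overline{AB_X}$. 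The final ingredient is a Banach-style iteration: given $y\in(1-\delta)cB_X$ with $\delta\in(0,1)$, one constructs inductively $h_k\in B_X$ with $y=A\bigl(\sum_{k\geq 1}(1-\delta)\delta^{k-1}h_k\bigr)$ and the sum in $B_X$; this shows $(1-\delta)cB_X\subset AB_X$, and letting $\delta\to 0$ yields $\cQ(A;X)\geq c$.

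The main obstacle is the last step. The duality arguments only produce closures ($cB_X\subset\overline{AB_X}$), whereas the definition of $\cQ(A;X)$ requires an honest inclusion into $AB_X$. Recovering the latter from the former requires the geometric-series iteration, and this asymmetry is the reason why the proof of the second identity is not a mere transcription of the proof of the first.
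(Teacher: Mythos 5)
Your proof is correct. Note that the paper does not prove this lemma at all: it is quoted with a reference to Pietsch's book (Section B.3.8), so there is no in-paper argument to compare against; your two-step scheme --- Hahn--Banach extension along $\operatorname{Im}A$ for $\cJ(A;X)=\cQ(A^*;X^*)$, and polar/bipolar duality plus the open-mapping-type geometric-series iteration (which is where completeness of $X$ enters, and $X$ is indeed Banach here) to upgrade $cB_X\subset\overline{AB_X}$ to $(1-\delta)cB_X\subset AB_X$ for $\cQ(A;X)=\cJ(A^*;X^*)$ --- is exactly the standard argument behind the cited result, and all the steps you outline go through.
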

%%%%%%%%%%%%%%%%%%%%%%%%%%%%%%%%%%%%%%%%%%%%%%%%%%%%%%%%%%%%%%%%%%%%%%%%%%%
\begin{lemma}[{see, e.g., \cite[Proposition~1.3.7]{Ku99}}]
\label{le:moduli-supermult}
If $A,B\in\cB(X)$, then
\[
\cJ(A;X)\cdot\cJ(B;X)\le\cJ(AB;X),
\quad
\cQ(A;X)\cdot\cQ(B;X)\le\cQ(AB;X).
\]
\end{lemma}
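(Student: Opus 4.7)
The plan is to prove the two inequalities separately, using directly the definitions of $\cJ$ and $\cQ$ together with a routine $\varepsilon$-approximation argument; no deep machinery beyond that of the preceding definitions is needed.

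For the injection modulus, I would fix arbitrary $c_A$ with $0\le c_A<\cJ(A;X)$ and $c_B$ with $0\le c_B<\cJ(B;X)$. By definition of the supremum in $\cJ(\cdot;X)$, these choices yield $\|Ag\|_X\ge c_A\|g\|_X$ for every $g\in X$ and $\|Bf\|_X\ge c_B\|f\|_X$ for every $f\in X$. Applying the first with $g=Bf$ and then the second, I obtain
\[
\|ABf\|_X\ge c_A\|Bf\|_X\ge c_Ac_B\|f\|_X
\quad\text{for all }f\in X,
\]
so $c_Ac_B\le\cJ(AB;X)$ by the very definition of $\cJ(AB;X)$. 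Letting $c_A\nearrow\cJ(A;X)$ and $c_B\nearrow\cJ(B;X)$ gives the first inequality.

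For the surjection modulus, I fix $0\le c_A<\cQ(A;X)$ and $0\le c_B<\cQ(B;X)$, so that $c_AB_X\subset AB_X$ and $c_BB_X\subset BB_X$. The goal is to show $c_Ac_BB_X\subset ABB_X$, from which $c_Ac_B\le\cQ(AB;X)$ follows. Given $y\in X$ with $\|y\|_X\le c_Ac_B$, the inclusion $c_AB_X\subset AB_X$ translates (by scaling $y$ to $c_Ay/\|y\|_X$ when $y\ne 0$) into the existence of some $u\in X$ with $Au=y$ and $\|u\|_X\le\|y\|_X/c_A\le c_B$. Then $u\in c_BB_X\subset BB_X$, so there is $w\in B_X$ with $Bw=u$, and consequently $ABw=Au=y$. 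Hence $y\in ABB_X$, which gives the inclusion, and letting $c_A,c_B$ approach the respective suprema yields the second inequality.

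The only point requiring care, and essentially the main (minor) obstacle, is the scaling step in the surjection argument: the inclusion $c_AB_X\subset AB_X$ must be turned into the quantitative statement that every $y\in X$ admits a preimage $u$ under $A$ with $\|u\|_X\le\|y\|_X/c_A$, which follows by normalizing $y$ to lie in $c_AB_X$ and then rescaling the preimage back. Once this is in place, the composition $A\circ B$ lifts any $y\in c_Ac_BB_X$ to an element of $B_X$, as shown above.
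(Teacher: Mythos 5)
Your argument is correct; the only microscopic caveat is the degenerate case $c_A=0$ in the surjection step (where you divide by $c_A$), which is harmless because then $c_Ac_B=0$ and the inequality $0\le\cQ(AB;X)$ is trivial, so one may simply assume $c_A,c_B>0$. The paper gives no proof of this lemma at all---it only cites \cite[Proposition~1.3.7]{Ku99}---so there is nothing internal to compare with; your direct verification from the definitions (downward closedness of the admissible sets of constants for $\cJ$, and the rescaling of preimages for $\cQ$) is exactly the standard argument from the cited source.
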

%%%%%%%%%%%%%%%%%%%%%%%%%%%%%%%%%%%%%%%%%%%%%%%%%%%%%%%%%%%%%%%%%%%%%%%%%%%
\begin{theorem}[{see, e.g., \cite[Theorem~1.3.2]{Ku99}}]
\label{th:Kurbatov}
An operator $A\in\cB(X)$ is invertible if and only if
\[
\cJ(A;X)>0,\quad\cQ(A;X)>0.
\]
If $A$ is invertible, then
\[
\cJ(A;X)=\cQ(A;X)=\frac{1}{\|A^{-1}\|_{\cB(X)}}.
\]
\end{theorem}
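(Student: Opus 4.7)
The plan is to prove both directions of the equivalence, treating the quantitative identity $\cJ(A;X)=\cQ(A;X)=1/\|A^{-1}\|_{\cB(X)}$ as part of the forward implication. All the ingredients are elementary once one unpacks the definitions of the two moduli, so the bulk of the work is organizing the four inequalities cleanly.

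For the forward direction, suppose $A$ is invertible with bounded inverse. For any $f\in X$ write $\|f\|_X=\|A^{-1}Af\|_X\le\|A^{-1}\|_{\cB(X)}\|Af\|_X$, so $\|Af\|_X\ge(1/\|A^{-1}\|_{\cB(X)})\|f\|_X$ and hence $\cJ(A;X)\ge 1/\|A^{-1}\|_{\cB(X)}$. In the other direction, if $c\ge 0$ satisfies $\|Af\|_X\ge c\|f\|_X$ for all $f\in X$, substituting $f=A^{-1}g$ gives $\|g\|_X\ge c\|A^{-1}g\|_X$ for all $g\in X$, so $c\le 1/\|A^{-1}\|_{\cB(X)}$, whence $\cJ(A;X)\le 1/\|A^{-1}\|_{\cB(X)}$. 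The identity $\cQ(A;X)=1/\|A^{-1}\|_{\cB(X)}$ can then be obtained either by a symmetric direct argument (noting that $A^{-1}$ maps $(1/\|A^{-1}\|_{\cB(X)})B_X$ into $B_X$, which gives the lower bound, and that $cB_X\subset AB_X$ forces $\|A^{-1}g\|_X\le\|g\|_X/c$ for all $g$, which gives the upper bound) or, more concisely, by invoking Lemma~\ref{le:injection-surjection-duality} together with the fact that $(A^*)^{-1}=(A^{-1})^*$ and $\|(A^{-1})^*\|_{\cB(X^*)}=\|A^{-1}\|_{\cB(X)}$.

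For the reverse direction, assume $\cJ(A;X)>0$ and $\cQ(A;X)>0$. Positivity of $\cJ(A;X)$ means $A$ is bounded below by a positive constant, which immediately gives injectivity and closedness of the range. Positivity of $\cQ(A;X)$ gives some $c>0$ with $cB_X\subset AB_X$; then for every nonzero $g\in X$ the vector $cg/\|g\|_X$ lies in $cB_X$ and hence in $AB_X$, so $g\in A\bigl(\tfrac{\|g\|_X}{c}B_X\bigr)$, proving surjectivity. Bijectivity together with the Banach open mapping theorem yields $A^{-1}\in\cB(X)$, and the quantitative identity is then supplied by the forward direction already proved.

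I do not anticipate a genuine obstacle in this proof; the only place one must be careful is the upper bound $\cQ(A;X)\le 1/\|A^{-1}\|_{\cB(X)}$, where one has to remember that $cB_X\subset AB_X$ is an inclusion of sets (not a statement about the image of a specific point), so that the deduction $\|A^{-1}g\|_X\le\|g\|_X/c$ uses injectivity of $A$ to identify the pre-image. Everything else is a routine manipulation of definitions, and the whole argument fits comfortably in a few lines once the four inequalities are separated and the duality lemma is invoked where convenient.
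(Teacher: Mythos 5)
Your proof is correct. Note, however, that the paper does not prove this statement at all: it is quoted as a known result from Kurbatov's book \cite[Theorem~1.3.2]{Ku99}, so there is no in-paper argument to compare against. Your elementary route --- the four inequalities $\cJ(A;X)\gtrless 1/\|A^{-1}\|_{\cB(X)}$ and $\cQ(A;X)\gtrless 1/\|A^{-1}\|_{\cB(X)}$ for the forward direction, and ``bounded below $\Rightarrow$ injective with closed range'' plus ``$cB_X\subset AB_X$ for some $c>0$ $\Rightarrow$ surjective'' combined with the open mapping theorem for the converse --- is the standard proof, and you correctly flag the one delicate point, namely that extracting $\|A^{-1}g\|_X\le\|g\|_X/c$ from the set inclusion $cB_X\subset AB_X$ uses invertibility to identify the preimage.
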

%%%%%%%%%%%%%%%%%%%%%%%%%%%%%%%%%%%%%%%%%%%%%%%%%%%%%%%%%%%%%%%%%%%%%%%%%%%
\subsection{Some fundamental properties of variable Lebesgue spaces}
Let $C_c^\infty(\R)$ be the set of all infinitely differentiable functions
with compact support. The following results were proved in
\cite[Theorems~2.4, 2.6, and 2.11]{KR91}.
%%%%%%%%%%%%%%%%%%%%%%%%%%%%%%%%%%%%%%%%%%%%%%%%%%%%%%%%%%%%%%%%%%%%%%%%%%%
\begin{theorem}\label{th:KR}
Let $p:\R\to[1,\infty]$ be a measurable function satisfying
\eqref{eq:exponents} and $f_n\in\ L^{p(\cdot)}(\R)$. Then
\begin{enumerate}[(a)]
\item
the set $C_c^\infty(\R)$ is dense in $L^{p(\cdot)}(\R)$;

\item
$\lim\limits_{n\to\infty}I_{p(\cdot)}(f_n)=0$ if and only if $\lim\limits_{n\to\infty}\|f_n\|_{p(\cdot)}=0$;

\item
for every continuous linear functional $G$ on $L^{p(\cdot)}(\R)$ there exists a
unique function $g\in L^{p'(\cdot)}(\R)$ such that
\[
G(f)=\int_\R f(x)\overline{g(x)}dx\quad
\ \mbox{for}\
f\in L^{p(\cdot)}(\R)
\]
and the norms $\|G\|$ and $\|g\|_{p'(\cdot)}$ are equivalent.
\end{enumerate}
\end{theorem}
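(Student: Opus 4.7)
My plan is to dispose of (b) first, since it is the modular/norm bridge needed throughout (a) and (c). The key inputs are the pointwise estimates $\lambda^{p_+}|f|^{p(x)}\le\lambda^{p(x)}|f|^{p(x)}\le\lambda^{p_-}|f|^{p(x)}$ valid for $0<\lambda\le 1$, which follow directly from \eqref{eq:exponents} and integrate to $\lambda^{p_+}I_{p(\cdot)}(f)\le I_{p(\cdot)}(\lambda f)\le\lambda^{p_-}I_{p(\cdot)}(f)$. If $\|f_n\|_{p(\cdot)}\to 0$, then eventually $\lambda_n:=\|f_n\|_{p(\cdot)}<1$, and Fatou applied as $\lambda\downarrow\lambda_n$ yields $I_{p(\cdot)}(f_n/\lambda_n)\le 1$; the upper estimate above then gives $I_{p(\cdot)}(f_n)\le\lambda_n^{p_-}\to 0$. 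Conversely, if $I_{p(\cdot)}(f_n)\to 0$ and $\eps\in(0,1)$, then $I_{p(\cdot)}(f_n/\eps)\le\eps^{-p_+}I_{p(\cdot)}(f_n)\le 1$ for large $n$, so $\|f_n\|_{p(\cdot)}\le\eps$.

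For (a) I would use three standard approximation steps, converting each modular estimate into a norm estimate via (b). First, for $f\in L^{p(\cdot)}(\R)$ the truncations $f_{n,M}:=f\chi_{[-n,n]}\chi_{\{|f|\le M\}}$ satisfy $I_{p(\cdot)}(f-f_{n,M})\to 0$ as $n,M\to\infty$ by dominated convergence with majorant $|f|^{p(x)}$. Second, a bounded compactly supported $g$ with $|g|\le M$ on $[-n,n]$ is approximated by continuous compactly supported $h$ via Lusin's theorem: choosing $|h|\le M$ and $|\{g\ne h\}|<\delta$ yields $I_{p(\cdot)}(g-h)\le(2M)^{p_+}\delta$. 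Third, mollification by a Friedrichs kernel $h\ast\phi_\eps$ produces $C_c^\infty(\R)$ approximations that converge to $h$ uniformly on a fixed compact set, and the resulting modular again goes to zero by dominated convergence. Part (b) then upgrades each step to norm convergence.

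Part (c) is the substantive one, and I would split it into the two inclusions. The easy direction uses the variable-exponent H\"older inequality $\int_\R|fg|\,dx\le C\|f\|_{p(\cdot)}\|g\|_{p'(\cdot)}$, a consequence of the pointwise Young inequality $|fg|\le|f|^{p(x)}/p(x)+|g|^{p'(x)}/p'(x)$; this shows that each $g\in L^{p'(\cdot)}(\R)$ induces a bounded functional $G_g(f):=\int_\R f\overline g\,dx$ with $\|G_g\|\le C\|g\|_{p'(\cdot)}$. For the converse, given $G\in(L^{p(\cdot)}(\R))^*$ I would set $\mu(E):=G(\chi_E)$ on bounded measurable sets; absolute continuity with respect to Lebesgue measure is immediate, and $\sigma$-additivity follows from applying (b) to the tails $\chi_{\bigcup_{k>n}E_k}$ together with continuity of $G$. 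Radon-Nikodym then produces a locally integrable $g$ with $G(s)=\int_\R s\overline g\,dx$ on simple functions, and the density afforded by (a) extends this to all of $L^{p(\cdot)}(\R)$. The delicate step, which I expect to be the main obstacle, is establishing $g\in L^{p'(\cdot)}(\R)$ with $\|g\|_{p'(\cdot)}\lesssim\|G\|$: I would truncate $g_n:=g\chi_{\{|g|\le n\}\cap[-n,n]}$ and test $G$ against the dual extremizer $f_n:=(|g_n|/\|g_n\|_{p'(\cdot)})^{p'(x)-1}\operatorname{sgn}(\overline{g_n})$. The identity $p(x)(p'(x)-1)=p'(x)$ makes the two modulars match, giving $\|f_n\|_{p(\cdot)}\le 1$ and $G(f_n)=\|g_n\|_{p'(\cdot)}$, hence $\|g_n\|_{p'(\cdot)}\le\|G\|$; a monotone-convergence limit on the modular of $g_n$ then transfers the bound to $g$. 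The care required in designing this variable-exponent extremizer is precisely the subtlety that is absent in the classical $L^p$ case.
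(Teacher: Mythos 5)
The paper does not prove this theorem at all: it is quoted verbatim from Kov\'a\v{c}ik and R\'akosn\'ik \cite{KR91} (Theorems 2.4, 2.6 and 2.11 there), so any comparison is with that classical source rather than with an argument in the text. Your reconstruction follows what is essentially the standard route and is correct in all its main steps: the two-sided modular estimate $\lambda^{p_+}I_{p(\cdot)}(f)\le I_{p(\cdot)}(\lambda f)\le\lambda^{p_-}I_{p(\cdot)}(f)$ for $0<\lambda\le 1$ together with the unit-ball property does give (b) (and this is exactly where $1<p_-\le p_+<\infty$ enters); truncation, Lusin, and mollification, each upgraded from modular to norm convergence via (b), give (a); and for (c) the H\"older inequality handles the easy inclusion, while $E\mapsto G(\chi_E)$, Radon--Nikodym, truncation of $g$, and testing against the conjugate-exponent extremizer give the hard one, with monotone convergence transferring the bound $\|g_n\|_{p'(\cdot)}\le\|G\|$ to $g$. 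A few points deserve tightening in a full write-up. First, the logical order in (c): the extension of the representation from simple functions ``to all of $L^{p(\cdot)}(\R)$ by density'' is only legitimate after you know $g\in L^{p'(\cdot)}(\R)$; what you actually need before the extremizer step is the representation on bounded functions of bounded support (obtained from simple functions by uniform approximation), since your $f_n$ is of that type but not simple. Second, with the sesquilinear pairing $\int f\overline{g}\,dx$ the extremizer should carry the factor $\operatorname{sgn}(g_n)=g_n/|g_n|$, not $\operatorname{sgn}(\overline{g_n})$, so that $f_n\overline{g_n}=|f_n|\,|g_n|\ge 0$. Third, the identity $G(f_n)=\|g_n\|_{p'(\cdot)}$ uses not only that the modulars match but that $I_{p'(\cdot)}\bigl(g_n/\|g_n\|_{p'(\cdot)}\bigr)=1$ (not merely $\le 1$); this norm-attainment holds because $p'_+<\infty$, i.e.\ because $p_->1$, and is worth a sentence, since with strict inequality the bound $\|g_n\|_{p'(\cdot)}\le\|G\|$ would not follow. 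Finally, uniqueness of $g$ is not addressed, though it is immediate by testing the difference of two representers against characteristic functions of bounded sets. None of these is a gap in the ideas; they are matters of ordering and bookkeeping in an otherwise sound proof of results the paper itself only cites.
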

%%%%%%%%%%%%%%%%%%%%%%%%%%%%%%%%%%%%%%%%%%%%%%%%%%%%%%%%%%%%%%%%%%%%%%%%%%%
\begin{corollary}\label{co:duality-N}
Let $p:\R\to[1,\infty]$ be a measurable function satisfying \eqref{eq:exponents}.
For every continuous linear functional $G$ on $L_N^{p(\cdot)}(\R)$ there
exists a unique function $g=(g_1,\dots,g_N)\in L_N^{p'(\cdot)}(\R)$ such
that
%%%
\begin{equation}\label{eq:duality-N}
G(f)=\sum_{\alpha=1}^N\int_\R f_\alpha(x)\overline{g_\alpha(x)}dx=:\langle f,g\rangle
\end{equation}
%%%
for all $f=(f_1,\dots,f_N)\in L_N^{p(\cdot)}(\R)$. The norms of $\|G\|$
and $\|g\|_{L_N^{p'(\cdot)}(\R)}$ are equivalent.
\end{corollary}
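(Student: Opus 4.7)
The plan is to reduce the vector-valued statement to the scalar case treated in Theorem~\ref{th:KR}(c) by testing $G$ on inputs supported in a single coordinate. First I would introduce, for each $\alpha\in\{1,\dots,N\}$, the scalar functional $G_\alpha$ on $L^{p(\cdot)}(\R)$ defined by $G_\alpha(f):=G\big((0,\dots,0,f,0,\dots,0)^{\mathrm T}\big)$ with $f$ placed in the $\alpha$-th slot. The vector norm gives $\|(0,\dots,f,\dots,0)^{\mathrm T}\|_{L_N^{p(\cdot)}(\R)}=\|f\|_{p(\cdot)}$, so each $G_\alpha$ is a continuous linear functional on $L^{p(\cdot)}(\R)$ with $\|G_\alpha\|\le\|G\|$. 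Theorem~\ref{th:KR}(c) then produces unique $g_\alpha\in L^{p'(\cdot)}(\R)$ representing $G_\alpha$, with $\|g_\alpha\|_{p'(\cdot)}$ equivalent to $\|G_\alpha\|$ via the universal constants from the scalar theorem. Setting $g=(g_1,\dots,g_N)^{\mathrm T}$ and expanding a general $f=(f_1,\dots,f_N)^{\mathrm T}$ as the sum of its coordinate components, linearity of $G$ yields formula \eqref{eq:duality-N}.

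Next I would establish the norm equivalence. For the bound $\|g\|_{L_N^{p'(\cdot)}(\R)}\lesssim\|G\|$, I simply take the $\ell^2$-sum of the scalar estimates $\|g_\alpha\|_{p'(\cdot)}\le C\|G_\alpha\|\le C\|G\|$, gaining at most a factor $\sqrt{N}$. For the reverse inequality, I combine the scalar Hölder-type bound $\big|\int_\R f_\alpha\overline{g_\alpha}\,dx\big|\le C'\|f_\alpha\|_{p(\cdot)}\|g_\alpha\|_{p'(\cdot)}$ (which is part of Theorem~\ref{th:KR}(c)) with the Cauchy--Schwarz inequality applied to the sum over $\alpha$:
\[
|G(f)|\le\sum_{\alpha=1}^N C'\|f_\alpha\|_{p(\cdot)}\|g_\alpha\|_{p'(\cdot)}\le C'\|f\|_{L_N^{p(\cdot)}(\R)}\|g\|_{L_N^{p'(\cdot)}(\R)},
\]
so $\|G\|\le C'\|g\|_{L_N^{p'(\cdot)}(\R)}$.

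Uniqueness of $g$ is immediate: if $\widetilde g$ also represented $G$ via \eqref{eq:duality-N}, then for each $\alpha$ and every $f\in L^{p(\cdot)}(\R)$ the scalar function $g_\alpha-\widetilde g_\alpha$ would annihilate $f$ via the pairing in Theorem~\ref{th:KR}(c), forcing $g_\alpha=\widetilde g_\alpha$ in $L^{p'(\cdot)}(\R)$ by the uniqueness clause of the scalar result. I do not anticipate a genuine obstacle here; the only mildly delicate point is bookkeeping of the constants, since the $\ell^2$-style vector norm interacts with the scalar Hölder inequality through Cauchy--Schwarz rather than through an equality, which is why one obtains equivalence rather than isometry of norms.
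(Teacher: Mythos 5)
Your argument is correct and is essentially the componentwise reduction to Theorem~\ref{th:KR}(c) that the paper leaves implicit when it states this result as a corollary: restrict $G$ to coordinate slots, represent each scalar functional, reassemble, and use Cauchy--Schwarz in $\alpha$ for the norm equivalence. The only cosmetic point is that the H\"older-type bound you invoke is not literally stated in Theorem~\ref{th:KR}(c), but it follows from the equivalence $\|G_\alpha\|\asymp\|g_\alpha\|_{p'(\cdot)}$ applied to the functional generated by $g_\alpha$, so nothing is missing.
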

%%%%%%%%%%%%%%%%%%%%%%%%%%%%%%%%%%%%%%%%%%%%%%%%%%%%%%%%%%%%%%%%%%%%%%%%%%%
\subsection{Singular integral operators and their adjoints}
For $a\in L_{N\times N}^\infty(\R)$, let $a^*$ denote the complex
conjugate of the transpose matrix function $a^{\mathrm{T}}$.
%%%%%%%%%%%%%%%%%%%%%%%%%%%%%%%%%%%%%%%%%%%%%%%%%%%%%%%%%%%%%%%%%%%%%%%%%%
\begin{lemma}\label{le:adjoint-multiplication}
Let $p:\R\to[1,\infty]$ be a measurable function satisfying \eqref{eq:exponents}.
If $a\in L_{N\times N}^\infty(\R)$, then
\[
(aI)^*=a^*I \in\cB(L_N^{p'(\cdot)}(\R)).
\]
\end{lemma}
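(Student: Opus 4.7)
The statement is essentially a bookkeeping result: we must show that the Banach-space adjoint of the multiplication operator $aI$, when the dual of $L_N^{p(\cdot)}(\R)$ is represented by $L_N^{p'(\cdot)}(\R)$ through the sesquilinear bracket $\langle\cdot,\cdot\rangle$ of Corollary~\ref{co:duality-N}, is itself the multiplication operator by the matrix $a^{*}=\overline{a^{\mathrm T}}$. My plan has three short steps.

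First, I verify that $a^{*}I$ is a bona-fide bounded operator on $L_N^{p'(\cdot)}(\R)$. Since the exponent $p$ satisfies \eqref{eq:exponents}, so does $p'$ (with $(p')_{-}=p_{+}'$ and $(p')_{+}=p_{-}'$), hence $L_N^{p'(\cdot)}(\R)$ is a well-defined variable Lebesgue space of the same type; the entries of $a^{*}$ are complex conjugates of the entries of $a$ and therefore lie in $L^{\infty}(\R)$, so multiplication by $a^{*}$ is bounded on $L_N^{p'(\cdot)}(\R)$ by the remark preceding Theorem~\ref{th:main} (applied to $p'$ in place of $p$). This ensures that the candidate operator $a^{*}I$ actually belongs to $\cB(L_N^{p'(\cdot)}(\R))$.

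Second, I carry out the pointwise computation that identifies the adjoint. For arbitrary $f=(f_1,\dots,f_N)\in L_N^{p(\cdot)}(\R)$ and $g=(g_1,\dots,g_N)\in L_N^{p'(\cdot)}(\R)$, I write $(af)_\alpha=\sum_{\beta}a_{\alpha\beta}f_\beta$ and compute, using \eqref{eq:duality-N} and interchanging the two finite sums,
\[
\langle aIf,g\rangle
=\sum_{\alpha=1}^{N}\int_{\R}\Big(\sum_{\beta=1}^{N}a_{\alpha\beta}(x)f_\beta(x)\Big)\overline{g_\alpha(x)}\,dx
=\sum_{\beta=1}^{N}\int_{\R}f_\beta(x)\,\overline{\Big(\sum_{\alpha=1}^{N}\overline{a_{\alpha\beta}(x)}\,g_\alpha(x)\Big)}\,dx.
\]
Since $(a^{*})_{\beta\alpha}=\overline{a_{\alpha\beta}}$, the inner sum equals $(a^{*}g)_\beta(x)$, so the right-hand side is precisely $\langle f,a^{*}Ig\rangle$. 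All integrals are absolutely convergent by Corollary~\ref{co:duality-N} applied to the products $(aI)f\in L_N^{p(\cdot)}(\R)$ and $g\in L_N^{p'(\cdot)}(\R)$, and also to $f$ and $a^{*}Ig\in L_N^{p'(\cdot)}(\R)$, so swapping the two finite sums requires no subtler justification than the linearity of the integral.

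Third, I invoke uniqueness of the representation in Corollary~\ref{co:duality-N}: for each $g\in L_N^{p'(\cdot)}(\R)$ the functional $f\mapsto\langle aIf,g\rangle$ on $L_N^{p(\cdot)}(\R)$ is represented by a unique element of $L_N^{p'(\cdot)}(\R)$, and by the identity just established this element is $a^{*}Ig$. This means precisely that $(aI)^{*}=a^{*}I$ in $\cB(L_N^{p'(\cdot)}(\R))$. There is really no main obstacle here; the only point requiring a little care is tracking the complex conjugate in the pairing of Corollary~\ref{co:duality-N}, which is exactly what produces the conjugate-transpose $a^{*}$ rather than the plain transpose.
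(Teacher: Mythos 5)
Your proposal is correct and follows essentially the same route as the paper: the same computation with the sesquilinear pairing of Corollary~\ref{co:duality-N}, interchanging the finite sums to recognize $\langle aIf,g\rangle=\langle f,a^{*}Ig\rangle$, and then invoking the uniqueness in that corollary to identify $(aI)^{*}$ with $a^{*}I$. The extra remarks on the boundedness of $a^{*}I$ on $L_N^{p'(\cdot)}(\R)$ are harmless additions that the paper leaves implicit.
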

%%%%%%%%%%%%%%%%%%%%%%%%%%%%%%%%%%%%%%%%%%%%%%%%%%%%%%%%%%%%%%%%%%%%%%%%%%%
\begin{proof}
Let $\langle\cdot,\cdot\rangle$ be the pairing defined by \eqref{eq:duality-N}
and $f\in L_N^{p(\cdot)}(\R)$, $g\in L_N^{p'(\cdot)}(\R)$. Then
%%%
\begin{align*}
\langle af,g\rangle
&=
\sum_{\alpha=1}^N\int_\R
\left(
\sum_{\beta=1}^N a_{\alpha\beta}(x) f_\beta(x)
\right)
\overline{g_\alpha(x)}\,dx
=
\sum_{\beta=1}^N\int_\R
\left(
\sum_{\alpha=1}^N a_{\alpha\beta}(x) \overline{g_\alpha(x)}
\right)
f_\beta(x)\,dx
\\
&=
\sum_{\alpha=1}^N\int_\R
\left(
\sum_{\beta=1}^N a_{\beta\alpha}(x) \overline{g_\beta(x)}
\right)
f_\alpha(x)\,dx
=
\sum_{\alpha=1}^N\int_\R f_\alpha(x)\overline{
\left(\sum_{\beta=1}^N\overline{a_{\beta\alpha}(x)}g_\beta(x)\right)}\,dx
=
\langle f,a^*g\rangle,
\end{align*}
%%%
which completes the proof in view of Corollary~\ref{co:duality-N}.
\qed
\end{proof}
%%%%%%%%%%%%%%%%%%%%%%%%%%%%%%%%%%%%%%%%%%%%%%%%%%%%%%%%%%%%%%%%%%%%%%%%%%%
\begin{lemma}\label{le:projections}
If $p\in\cE$, then $P,Q\in\cB(L_N^{p(\cdot)}(\R))$ and $P^2=P$, $Q^2=Q$.
\end{lemma}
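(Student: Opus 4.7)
The plan is to reduce both claims to the single identity $S^2=I$ on $L_N^{p(\cdot)}(\R)$. Boundedness of $P$ and $Q$ is immediate: since $p\in\cE$, the operator $S$ lies in $\cB(L^{p(\cdot)}(\R))$ by definition, hence componentwise also in $\cB(L_N^{p(\cdot)}(\R))$, and therefore $P=(I+S)/2$ and $Q=(I-S)/2$ are bounded too. A direct algebraic expansion shows
\[
P^2=\frac{I+2S+S^2}{4},\qquad Q^2=\frac{I-2S+S^2}{4},
\]
so the relations $P^2=P$ and $Q^2=Q$ are both equivalent to $S^2=I$ on $L_N^{p(\cdot)}(\R)$.

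To prove $S^2=I$, I would argue by density. For any scalar $\varphi\in C_c^\infty(\R)$ one has $\varphi\in L^2(\R)$, and since $S$ is known to be a bounded involution on the standard Lebesgue space $L^2(\R)$ (a classical fact, obtainable via Plemelj formulas or the Fourier multiplier representation of $S$), we get $S^2\varphi=\varphi$ in $L^2(\R)$, and hence a.e. on $\R$. On the other hand, $S^2-I$ is a bounded operator on $L^{p(\cdot)}(\R)$, and by Theorem~\ref{th:KR}(a) the set $C_c^\infty(\R)$ is dense in $L^{p(\cdot)}(\R)$. Since $(S^2-I)\varphi=0$ for every $\varphi$ in this dense subset, continuity forces $S^2=I$ on all of $L^{p(\cdot)}(\R)$, and then componentwise on $L_N^{p(\cdot)}(\R)$.

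The only delicate point is to justify that the $L^2$-identity $S^2\varphi=\varphi$ is the same as the $L^{p(\cdot)}$-identity, i.e.\ that the Cauchy singular integral of $\varphi\in C_c^\infty(\R)$ computed in the principal value sense produces a single function $S\varphi$ that represents the action of the operator $S$ on both spaces. This is automatic from the definition of $(Sf)(x)$ as a principal value integral, which depends only on $f\in L^1_{\rm loc}(\R)$ and not on the ambient Banach space; hence $S\varphi$ is the same measurable function whether viewed in $L^2(\R)$ or in $L^{p(\cdot)}(\R)$, and likewise for $S(S\varphi)$. With this observation in place the density argument closes the proof.
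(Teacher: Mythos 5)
Your argument is correct and coincides with the paper's own proof: both reduce the idempotency of $P$ and $Q$ to the identity $S^2=I$ on $L^{p(\cdot)}(\R)$, verify it on $C_c^\infty(\R)$ via the classical involution property of $S$ on $L^2(\R)$, and extend by the density of $C_c^\infty(\R)$ in $L^{p(\cdot)}(\R)$ (Theorem~\ref{th:KR}(a)) together with the boundedness of $S^2$ on $L^{p(\cdot)}(\R)$, working elementwise for the matrix case. Your extra remark that the principal value definition of $S$ yields the same function independently of the ambient space is a welcome (if implicit in the paper) clarification.
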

%%%%%%%%%%%%%%%%%%%%%%%%%%%%%%%%%%%%%%%%%%%%%%%%%%%%%%%%%%%%%%%%%%%%%%%%%%%
\begin{proof}
Since the operators $S$, $P$, and $Q$ are defined elementwise, it is
sufficient to prove the statement for $N=1$. It is well known (see, e.g.,
\cite[formula (3.5)]{D01}) that
\[
S^2\varphi=\varphi\quad\mbox{for}\quad\varphi\in L^2(\R).
\]
In particular, the above formula holds for all $\varphi\in C_c^\infty(\R)$.
Let $f\in L^{p(\cdot)}(\R)$. By Theorem~\ref{th:KR}(a), there exists a sequence
$\{\varphi_n\}_{n=1}^\infty\subset C_c^\infty(\R)$ such that
%%%
\begin{equation}\label{eq:projections-1}
\lim_{n\to\infty}\|f-\varphi_n\|_{p(\cdot)}=0.
\end{equation}
%%%
Since $p\in\cE$, we conclude that $S^2\in\cB(L^{p(\cdot)}(\R))$. Hence
%%%
\begin{equation}\label{eq:projections-2}
\lim_{n\to\infty}\|S^2f-S^2\varphi_n\|_{p(\cdot)}\le\|S^2\|_{\cB(L^{p(\cdot)}(\R))}
\lim_{n\to\infty}\|f-\varphi_n\|_{p(\cdot)}=0.
\end{equation}
%%%
Passing to the limit in the equality $S^2\varphi_n=\varphi_n$ as $n\to\infty$
and taking into account \eqref{eq:projections-1}--\eqref{eq:projections-2},
we arrive at $S^2f=f$ for $f\in L^{p(\cdot)}(\R)$, that is, $S^2=I$ on
$L^{p(\cdot)}(\R)$. This immediately implies that $P^2=P$ and $Q^2=Q$.
\qed
\end{proof}
%%%%%%%%%%%%%%%%%%%%%%%%%%%%%%%%%%%%%%%%%%%%%%%%%%%%%%%%%%%%%%%%%%%%%%%%%%%
\begin{lemma}\label{le:adjoints-SPQ}
If $p\in\cE$, then $p'\in\cE$ and
\[
S^*=S,\quad
P^*=P,\quad
Q^*=Q
\]
belong to $\cB(L_N^{p'(\cdot)}(\R))$.
\end{lemma}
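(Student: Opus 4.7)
The plan is to prove the three statements in the order: $p'\in\cE$, $S^*=S\in\cB(L_N^{p'(\cdot)}(\R))$, and then derive $P^*=P$, $Q^*=Q$. Since the operators $S,P,Q$ act componentwise on $L_N^{p(\cdot)}(\R)$ and the duality of Corollary~\ref{co:duality-N} is componentwise as well, it suffices to work in the scalar case $N=1$ and lift the conclusions trivially.

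First I would verify that $p'$ lies in the class $\cE$. The exponent $p'=p/(p-1)$ is continuous on $\R$ because $p$ is continuous and bounded below by $p_->1$, and the inequalities $1<p_+/(p_+-1)\le p'(x)\le p_-/(p_--1)<\infty$ show that $p'$ satisfies \eqref{eq:exponents}. For boundedness of $S$ on $L^{p'(\cdot)}(\R)$, I would invoke Theorem~\ref{th:KR}(c) to identify $(L^{p(\cdot)}(\R))^*$ with $L^{p'(\cdot)}(\R)$ via the pairing $\langle\cdot,\cdot\rangle$; the Banach-space adjoint $S^*$ is automatically bounded on the dual, hence, via this identification, on $L^{p'(\cdot)}(\R)$. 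The remaining task is to show that, under this identification, $S^*$ coincides with $S$.

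The core step is therefore the identity $S^*=S$. Take $f,g\in C_c^\infty(\R)$, and write $S$ as the pointwise limit of its truncations $S_\eps$ with kernel $k_\eps(x,\tau)=\chi_{|\tau-x|>\eps}/[\pi i(\tau-x)]$. Since $f,g$ are compactly supported and smooth, one can pass the truncated integrals through Fubini, exploit the kernel symmetry
\[
\overline{k_\eps(\tau,x)}=\overline{\frac{1}{\pi i(x-\tau)}}=\frac{1}{\pi i(\tau-x)}=k_\eps(x,\tau),
\]
and let $\eps\to0$ with the aid of the standard a.e.\ and dominated-convergence estimates for the truncated Hilbert transform on $C_c^\infty$. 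This yields $\langle Sf,g\rangle=\langle f,Sg\rangle$ for all $f,g\in C_c^\infty(\R)$, i.e.\ $S^*g=Sg$ as elements of $L^{p'(\cdot)}(\R)$ for every $g\in C_c^\infty(\R)$. Because $C_c^\infty(\R)$ is dense in $L^{p'(\cdot)}(\R)$ by Theorem~\ref{th:KR}(a) and $S^*$ is bounded on $L^{p'(\cdot)}(\R)$, the operator $S^*$ is the unique bounded extension of $S|_{C_c^\infty(\R)}$ to $L^{p'(\cdot)}(\R)$; we may therefore declare this extension to be $S$ and conclude that $S\in\cB(L^{p'(\cdot)}(\R))$ with $S^*=S$. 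This in particular completes the verification that $p'\in\cE$.

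Finally, applying Lemma~\ref{le:adjoint-multiplication} with $a=I$ gives $I^*=I$, and combining with $S^*=S$ yields
\[
P^*=\left(\tfrac{I+S}{2}\right)^*=\tfrac{I^*+S^*}{2}=\tfrac{I+S}{2}=P,
\qquad
Q^*=\left(\tfrac{I-S}{2}\right)^*=\tfrac{I-S}{2}=Q,
\]
all as bounded operators on $L_N^{p'(\cdot)}(\R)$ by Lemma~\ref{le:projections} applied with $p'$ in place of $p$. The main obstacle I expect is the careful justification of the Fubini/limit interchange on the dense subspace $C_c^\infty(\R)$, since $S$ is only a principal-value singular integral and one must avoid any appeal to a priori pointwise bounds on $Sg$ for general $g\in L^{p'(\cdot)}(\R)$; everything else is then a matter of routine density and duality.
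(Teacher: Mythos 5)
Your proposal is correct and follows essentially the same route as the paper: establish the symmetry relation $\langle Sf,g\rangle=\langle f,Sg\rangle$ on the dense subspace $C_c^\infty(\R)$, then use the duality $(L^{p(\cdot)}(\R))^*\cong L^{p'(\cdot)}(\R)$ and boundedness of the adjoint to conclude $S^*=S\in\cB(L^{p'(\cdot)}(\R))$, hence $p'\in\cE$, and finally pass to $P$ and $Q$. The only difference is cosmetic: the paper simply quotes the known $L^2$ identity $\int_\R (S\varphi)\overline{\psi}\,dx=\int_\R \varphi\overline{S\psi}\,dx$ (together with Kato's standard extension argument) where you re-derive it by hand from the truncated kernels via Fubini.
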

%%%%%%%%%%%%%%%%%%%%%%%%%%%%%%%%%%%%%%%%%%%%%%%%%%%%%%%%%%%%%%%%%%%%%%%%%%%
\begin{proof}
Since the operators $S$, $P$, and $Q$ are defined elementwise on $L^{p(\cdot)}(\R)$,
it is sufficient to prove the statement for $N=1$. It is well known
that for $\varphi,\psi\in L^2(\R)$,
\[
\int_\R(S\varphi)(x)\overline{\psi(x)}\,dx=\int_\R\varphi(x)\overline{(S\psi)(x)}\,dx
\]
(see, e.g., \cite[formula (3.6)]{D01}). In particular, this equality holds for
all $\varphi,\psi\in C_c^\infty(\R)$. This means that $S$ is a self-adjoint
and densely defined operator on $L^{p(\cdot)}(\R)$ and $L^{p'(\cdot)}(\R)$ (see
Theorem~\ref{th:KR}(a)). By the standard argument (see
\cite[Chap.~III, Section~5.5]{K95}), one can show that $S=S^*
\in\cB(L^{p'(\cdot)}(\R))$ because
$S\in\cB(L^{p(\cdot)}(\R))$. This yields $p'\in\cE$ and also
the equalities
\[
P^*=(I+S)^*/2=(I+S)/2=P,
\quad
Q^*=(I-S)^*/2=(I-S)/2=Q,
\]
which finishes the proof.
\qed
\end{proof}
%%%%%%%%%%%%%%%%%%%%%%%%%%%%%%%%%%%%%%%%%%%%%%%%%%%%%%%%%%%%%%%%%%%%%%%%%%%
From Lemmas~\ref{le:adjoint-multiplication} and \ref{le:adjoints-SPQ}
we immediately get the following.
%%%%%%%%%%%%%%%%%%%%%%%%%%%%%%%%%%%%%%%%%%%%%%%%%%%%%%%%%%%%%%%%%%%%%%%%%%%
\begin{corollary}\label{co:adjoints-sio}
If $p\in\cE$  and $a,b\in L_{N\times N}^\infty(\R)$, then
\[
(aP+bQ)^*=Pa^*I+Qb^*I\in\cB(L_N^{p'(\cdot)}(\R)).
\]
\end{corollary}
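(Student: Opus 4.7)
The plan is to deduce this corollary directly from the two preceding lemmas by exploiting the standard functorial properties of adjoints on a Banach space, namely $(A+B)^* = A^* + B^*$ and $(AB)^* = B^*A^*$ whenever $A,B$ are bounded.

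First, I would unpack the composite operators. Since $p \in \cE$, Lemma \ref{le:projections} gives $P, Q \in \cB(L_N^{p(\cdot)}(\R))$, and since $a, b \in L_{N\times N}^\infty(\R)$ the operators $aI$ and $bI$ are bounded on $L_N^{p(\cdot)}(\R)$. Therefore $aP = (aI)P$ and $bQ = (bI)Q$ are bounded, and the sum $aP + bQ$ is a bounded operator on $L_N^{p(\cdot)}(\R)$ whose adjoint acts on the dual space, which by Corollary \ref{co:duality-N} is (isomorphic to) $L_N^{p'(\cdot)}(\R)$.

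Next, I would apply the adjoint identities termwise: $(aP+bQ)^* = (aP)^* + (bQ)^* = P^*(aI)^* + Q^*(bI)^*$. At this point Lemma \ref{le:adjoint-multiplication} supplies $(aI)^* = a^*I$ and $(bI)^* = b^*I$, while Lemma \ref{le:adjoints-SPQ} gives $p' \in \cE$ together with $P^* = P$ and $Q^* = Q$ on $L_N^{p'(\cdot)}(\R)$. Substituting these identifications yields $(aP+bQ)^* = Pa^*I + Qb^*I$, and the right-hand side lies in $\cB(L_N^{p'(\cdot)}(\R))$ because each factor does (again by Lemmas \ref{le:adjoint-multiplication} and \ref{le:adjoints-SPQ} applied with the exponent $p'$).

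There is no real obstacle here; the statement is a routine bookkeeping consequence of the preceding two lemmas. The only mild subtlety is to make sure the adjoints are interpreted consistently with the pairing $\langle\cdot,\cdot\rangle$ from \eqref{eq:duality-N}, which is precisely the pairing used in the proofs of both Lemma \ref{le:adjoint-multiplication} and Lemma \ref{le:adjoints-SPQ}, so the identifications compose without ambiguity.
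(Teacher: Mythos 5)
Your argument is correct and is exactly the route the paper takes: the paper states that the corollary follows immediately from Lemma~\ref{le:adjoint-multiplication} and Lemma~\ref{le:adjoints-SPQ}, i.e., by writing $(aP+bQ)^*=P^*(aI)^*+Q^*(bI)^*=Pa^*I+Qb^*I$ with all adjoints taken with respect to the pairing \eqref{eq:duality-N}. Your write-up just makes the routine bookkeeping explicit, including the boundedness on $L_N^{p'(\cdot)}(\R)$ via $p'\in\cE$, which matches the paper's intent.
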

%%%%%%%%%%%%%%%%%%%%%%%%%%%%%%%%%%%%%%%%%%%%%%%%%%%%%%%%%%%%%%%%%%%%%%%%%%%
The proof of the next statement is a matter of a
straightforward calculation and application of
Lemma~\ref{le:projections} when necessary.
%%%%%%%%%%%%%%%%%%%%%%%%%%%%%%%%%%%%%%%%%%%%%%%%%%%%%%%%%%%%%%%%%%%%%%%%%%%
\begin{lemma}\label{le:relations-sio}
If $p\in\cE$ and $a\in L_{N\times N}^\infty(\R)$, then
%%%
\begin{equation}\label{eq:relations-sio-1}
(I\pm PaQ)^{-1}=I\mp PaQ,
\quad
(I\pm QaP)^{-1}=I\mp QaP,
\end{equation}
%%%
and
%%%
\[
PaI+Q =(I+PaQ)(aP+Q)(I-QaP),
\quad
P+QaI =(I+QaP)(P+aQ)(I-PaQ).
\]
\end{lemma}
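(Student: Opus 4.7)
The plan is to reduce everything to the algebraic facts that $P$ and $Q$ are complementary idempotents. Lemma~\ref{le:projections} gives $P^2=P$ and $Q^2=Q$, and the definitions $P=(I+S)/2$, $Q=(I-S)/2$ immediately yield $P+Q=I$. Multiplying this identity on the right by $P$ (or $Q$) and using $P^2=P$ (or $Q^2=Q$) produces the key relation
\[
PQ=QP=0,
\]
which is what drives the whole lemma.

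With this in hand, the identities \eqref{eq:relations-sio-1} fall out instantly. For any $a\in L_{N\times N}^\infty(\R)$,
\[
(PaQ)^2=Pa(QP)aQ=0,\qquad (QaP)^2=Qa(PQ)aP=0,
\]
so that $(I\pm PaQ)(I\mp PaQ)=I-(PaQ)^2=I$, and symmetrically for $QaP$. This simultaneously verifies that both $I\pm PaQ$ and $I\pm QaP$ are invertible and exhibits their inverses.

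For the two factorization formulas I would simply expand the right-hand sides. Using $PQ=QP=0$ together with $P^2=P$ and $Q^2=Q$, the expansion of $(I+PaQ)(aP+Q)(I-QaP)$ collapses: terms containing the product $PQ$ or $QP$ vanish outright, while the two surviving quadratic contributions $PaQ\cdot aP=PaQaP$ and $-PaQ\cdot QaP=-PaQaP$ cancel. What remains is $aP+Q-QaP+PaQ$, which rewrites as $PaP+PaQ+Q=Pa(P+Q)+Q=PaI+Q$ via $I-Q=P$ and $P+Q=I$, giving the first factorization. The second factorization is obtained from the identical computation after interchanging the roles of $P$ and $Q$ throughout; this sends $PaI+Q$ to $QaI+P=P+QaI$, as required.

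The only obstacle here is a very mild bookkeeping one: since the matrix-valued coefficient $a$ does not commute with $P$ or $Q$, the order of the factors in each expansion has to be respected. No analytic issue arises, as all of the operators appearing in the computation are bounded on $L_N^{p(\cdot)}(\R)$ by Lemma~\ref{le:projections} and the hypothesis $a\in L_{N\times N}^\infty(\R)$.
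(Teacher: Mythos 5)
Your proof is correct and matches the paper's intent: the paper dismisses this lemma as "a matter of a straightforward calculation" using Lemma~\ref{le:projections}, and your computation — deriving $PQ=QP=0$ from $P+Q=I$ and idempotency, using nilpotency of $PaQ$ and $QaP$ for \eqref{eq:relations-sio-1}, and expanding the factorizations — is exactly that calculation, carried out carefully with the noncommutativity of $a$ respected.
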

%%%%%%%%%%%%%%%%%%%%%%%%%%%%%%%%%%%%%%%%%%%%%%%%%%%%%%%%%%%%%%%%%%%%%%%%%%%
\subsection{Compact operators and convergence of sequences of operators}
\begin{lemma}[{see, e.g., \cite[Lemma~1.4.7]{RSS11}}]
\label{le:compact-convergence}
Let $X$ be a Banach space. Suppose $A,B\in\cB(X)$, and $A_n,B_n\in\cB(X)$
for all $n\in\N$. If $K\in\cK(X)$ and if $A_n\to A$ and
$B_n^*\to B^*$ strongly as $n\to\infty$, then $\|A_nKB_n-AKB\|_{\cB(X)}\to 0$ as
$n\to\infty$.
\end{lemma}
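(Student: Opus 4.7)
The plan is to use the standard algebraic decomposition
\[
A_nKB_n-AKB=(A_n-A)KB+A_nK(B_n-B),
\]
and show that each summand tends to zero in operator norm for independent reasons.

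For the first summand, I would use the fact that $KB\in\cK(X)$ since $K$ is compact and $B$ is bounded. The uniform boundedness principle, applied to the strongly convergent sequence $\{A_n-A\}$, yields $\sup_n\|A_n-A\|_{\cB(X)}<\infty$. Then I would invoke the classical fact that a strongly null sequence of bounded operators converges to zero uniformly on any relatively compact set: since $(KB)(B_X)$ has compact closure, $\sup_{f\in B_X}\|(A_n-A)KBf\|_X\to 0$, i.e.\ $\|(A_n-A)KB\|_{\cB(X)}\to 0$.

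For the second summand, first apply the uniform boundedness principle to $\{A_n\}$ to get $\sup_n\|A_n\|_{\cB(X)}<\infty$, so it suffices to prove $\|K(B_n-B)\|_{\cB(X)}\to 0$. Here I would pass to adjoints:
\[
\|K(B_n-B)\|_{\cB(X)}=\|(B_n-B)^*K^*\|_{\cB(X^*)}=\|(B_n^*-B^*)K^*\|_{\cB(X^*)}.
\]
Since the adjoint of a compact operator is compact, $K^*\in\cK(X^*)$, and by hypothesis $B_n^*\to B^*$ strongly on $X^*$, so the same compact-plus-strong argument as above (applied in $X^*$) gives $\|(B_n^*-B^*)K^*\|_{\cB(X^*)}\to 0$. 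Combining, $\|A_nK(B_n-B)\|_{\cB(X)}\le\sup_n\|A_n\|_{\cB(X)}\cdot\|K(B_n-B)\|_{\cB(X)}\to 0$.

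The only nontrivial ingredient is the classical lemma that a strongly convergent sequence of operators converges uniformly on relatively compact subsets, which is the main obstacle if one wants to prove this from first principles; but this follows from a standard $\eps/3$ argument using a finite $\eps$-net of the compact set together with the uniform bound provided by Banach--Steinhaus. Since this is really the only content, and the rest is algebraic manipulation plus the identity $\|T\|_{\cB(X)}=\|T^*\|_{\cB(X^*)}$, the whole proof reduces to combining these two appearances of the compact-plus-strong principle, once on $X$ and once on $X^*$.
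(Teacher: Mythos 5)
Your proof is correct, and it is essentially the standard argument behind the cited result \cite[Lemma~1.4.7]{RSS11}: the paper itself gives no proof but only the reference, and the reference's proof rests on the same two ingredients you use, namely the decomposition $A_nKB_n-AKB=(A_n-A)KB+A_nK(B_n-B)$ together with the fact that a strongly convergent sequence converges uniformly on relatively compact sets, applied once on $X$ and once on $X^*$ via $\|K(B_n-B)\|_{\cB(X)}=\|(B_n^*-B^*)K^*\|_{\cB(X^*)}$ and Schauder's theorem. Note that your use of the hypothesis $B_n^*\to B^*$ strongly (rather than $B_n\to B$) is exactly the point of the lemma's formulation, so nothing is missing.
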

%%%%%%%%%%%%%%%%%%%%%%%%%%%%%%%%%%%%%%%%%%%%%%%%%%%%%%%%%%%%%%%%%%%%%%%%%%%
Let $\chi_E$ be the characteristic function of a set $E\subset\R$.
%%%%%%%%%%%%%%%%%%%%%%%%%%%%%%%%%%%%%%%%%%%%%%%%%%%%%%%%%%%%%%%%%%%%%%%%%%%
\begin{lemma}\label{le:convergence-compact-multiplication}
Let $p:\R\to[1,\infty]$ be a measurable function satisfying
\eqref{eq:exponents}. For $n\in\N$ and $x\in\R$, put
\[
\psi_n(x):=1-\chi_{[-1,1]}(x/n).
\]
\begin{enumerate}[(a)]
\item
The sequence $\{\psi_n I\}_{n=1}^\infty$ converges strongly to the zero operator
on $L^{p(\cdot)}(\R)$ and on $L^{p'(\cdot)}(\R)$ as $n\to\infty$.

\item
If $K\in\cK(L^{p(\cdot)}(\R))$, then
\[
\lim_{n\to\infty}\|K\psi_nI\|_{\cB(L^{p(\cdot)}(\R))}=0.
\]
\end{enumerate}
\end{lemma}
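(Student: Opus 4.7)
The plan is to prove (a) by reducing norm convergence to modular convergence via Theorem~\ref{th:KR}(b), and then derive (b) from (a) by invoking Lemma~\ref{le:compact-convergence}.

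For part (a), fix $f\in L^{p(\cdot)}(\R)$. By definition of the space there is a $\lambda>0$ with $I_{p(\cdot)}(f/\lambda)<\infty$, i.e.\ $|f/\lambda|^{p(\cdot)}\in L^1(\R)$. Since $\psi_n(x)=\chi_{\R\setminus[-n,n]}(x)\to 0$ pointwise while $|\psi_n f/\lambda|^{p(\cdot)}\le |f/\lambda|^{p(\cdot)}$, dominated convergence gives
\[
I_{p(\cdot)}(\psi_n f/\lambda)=\int_{|x|>n}\left|\frac{f(x)}{\lambda}\right|^{p(x)}dx\longrightarrow 0.
\]
Theorem~\ref{th:KR}(b) then yields $\|\psi_n f/\lambda\|_{p(\cdot)}\to 0$, hence $\|\psi_n f\|_{p(\cdot)}\to 0$. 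Since the assumption \eqref{eq:exponents} on $p$ is inherited by $p'$, the identical argument applies on $L^{p'(\cdot)}(\R)$.

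For part (b), note first that $\psi_n$ is real-valued and scalar, so Lemma~\ref{le:adjoint-multiplication} (with $N=1$) gives $(\psi_n I)^*=\psi_n I$ as an operator on $L^{p'(\cdot)}(\R)$. Apply Lemma~\ref{le:compact-convergence} on $X=L^{p(\cdot)}(\R)$ with $A_n=A=I$, $B_n=\psi_n I$, $B=0$: the hypothesis $A_n\to A$ strongly is trivial, and $B_n^*=\psi_n I\to 0$ strongly on $L^{p'(\cdot)}(\R)$ by part (a) applied to the dual exponent. Lemma~\ref{le:compact-convergence} then delivers
\[
\|K\psi_n I\|_{\cB(L^{p(\cdot)}(\R))}=\|A_n K B_n-AKB\|_{\cB(L^{p(\cdot)}(\R))}\longrightarrow 0.
\]

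There is no real obstacle here; the only point that deserves a moment's attention is the passage from the modular to the Luxemburg norm in part (a), which is handled cleanly by Theorem~\ref{th:KR}(b), and the verification that $(\psi_n I)^*=\psi_n I$ so that part (a) applied on the dual space gives exactly the hypothesis needed to feed Lemma~\ref{le:compact-convergence}.
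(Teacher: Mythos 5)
Your proof is correct and follows essentially the same route as the paper: part (b) is the paper's argument verbatim (identify $(\psi_n I)^*=\psi_n I$ on $L^{p'(\cdot)}(\R)$ and feed the strong convergence from part (a), applied to the dual exponent, into Lemma~\ref{le:compact-convergence}), and part (a) likewise rests on passing from the modular to the norm via Theorem~\ref{th:KR}(b). The only difference is cosmetic: in (a) the paper first reduces to $f\in C_c^\infty(\R)$ by density, where the modular is eventually exactly zero, whereas you apply dominated convergence directly to an arbitrary $f$ using the finiteness of $I_{p(\cdot)}(f/\lambda)$ for some $\lambda>0$; both arguments are sound.
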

%%%%%%%%%%%%%%%%%%%%%%%%%%%%%%%%%%%%%%%%%%%%%%%%%%%%%%%%%%%%%%%%%%%%%%%%%%%
\begin{proof}
(a) If $1<\operatornamewithlimits{ess\,inf}\limits_{x\in\R}p(x)$, then
$\operatornamewithlimits{ess\,sup}\limits_{x\in\R}p'(x)<\infty$. Therefore, by
Theorem~\ref{th:KR}(a)--(b), it is sufficient to prove that
%%%
\begin{equation}\label{eq:ccm-1}
\lim_{n\to\infty}I_{p(\cdot)}(\psi_n f)=0
\quad\text{for all}\quad
f\in C_c^\infty(\R).
\end{equation}
%%%

Suppose $f\in C_c^\infty(\R)$. Then there exists $n_0\in\N$ such that
$\operatorname{supp} f\subset[-n_0,n_0]$. Then for all $n\ge n_0$,
%%%
\[
I_{p(\cdot)}(\psi_n f)
=
\int_\R|\big(1-\chi_{[-1,1]}(x/n)\big)f(x)|^{p(x)}dx
=
\int_\R|\chi_{\R\setminus[-n,n]}(x)f(x)|^{p(x)}dx
=
\int_{\R\setminus[-n,n]}|f(x)|^{p(x)}dx=0.
\]
%%%
Thus $I_{p(\cdot)}(\psi_n f)=0$ for all $n\ge n_0$, which finishes the proof of
\eqref{eq:ccm-1}. Part (a) is proved.

\medskip
(b) From Theorem~\ref{th:KR}(c) it follows that $(\psi_n I)^*=\psi_n I\in\cB(L^{p'(\cdot)}(\R))$.
By part~(a), the sequence $\{(\psi_n I)^*\}_{n=1}^\infty$ converges strongly to
the zero operator. It remains to apply Lemma~\ref{le:compact-convergence}.
\qed
\end{proof}
%%%%%%%%%%%%%%%%%%%%%%%%%%%%%%%%%%%%%%%%%%%%%%%%%%%%%%%%%%%%%%%%%%%%%%%%%%%
\subsection{Important property of slowly oscillating functions}
The following statement is proved by analogy with \cite[Proposition 4(ii)]{BKS04}.
%%%%%%%%%%%%%%%%%%%%%%%%%%%%%%%%%%%%%%%%%%%%%%%%%%%%%%%%%%%%%%%%%%%%%%%%%%%
\begin{lemma}\label{le:SO-uniform}
Let $f\in SO$. Suppose $\{h_k\}_{k=1}^\infty\subset\R$ is a sequence tending
to $+\infty$ {\rm(}resp. to $-\infty${\rm)} and such that the limit
%%%
\begin{equation}\label{eq:SO-uniform-1}
\lim_{k\to\infty} f(h_k)=:g
\end{equation}
%%%
exist. Then for every $R>0$,
%%%
\begin{equation}\label{eq:SO-uniform-2}
\lim_{k\to\infty}\sup_{x\in[-R,R]}|f(x+h_k)-g|=0.
\end{equation}
\end{lemma}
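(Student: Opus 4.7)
The plan is to reduce \eqref{eq:SO-uniform-2} to the assumption \eqref{eq:SO-uniform-1} via the triangle inequality, after first showing that
\[
\lim_{k\to\infty}\sup_{x\in[-R,R]}|f(x+h_k)-f(h_k)|=0.
\]
This is a purely geometric observation based on the definition of $SO$: for $k$ large, the whole interval $[h_k-R,h_k+R]$ fits inside one of the windows $[x_k,2x_k]$ or $[-2x_k,-x_k]$ with $x_k\to+\infty$, so the oscillation across this interval is controlled by $\operatorname{osc}(f,[-2x_k,-x_k]\cup[x_k,2x_k])$, which vanishes in the limit.

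More precisely, first consider the case $h_k\to+\infty$. Set $x_k:=(h_k+R)/2$. A direct computation shows that as soon as $h_k\ge 3R$, one has $x_k\le h_k-R$ and $2x_k=h_k+R$, so that $h_k\in[h_k-R,h_k+R]\subset[x_k,2x_k]$ and consequently
\[
\sup_{x\in[-R,R]}|f(x+h_k)-f(h_k)|\le\operatorname{osc}\bigl(f,[x_k,2x_k]\bigr)\le\operatorname{osc}\bigl(f,[-2x_k,-x_k]\cup[x_k,2x_k]\bigr).
\]
Since $x_k\to+\infty$, the right-hand side tends to $0$ by the definition of $SO$. The case $h_k\to-\infty$ is entirely analogous: setting $x_k:=(R-h_k)/2=(R+|h_k|)/2$, one checks that for $|h_k|\ge 3R$ the inclusion $[h_k-R,h_k+R]\subset[-2x_k,-x_k]$ holds, and the same $SO$ estimate applies.

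Finally, the triangle inequality
\[
\sup_{x\in[-R,R]}|f(x+h_k)-g|\le\sup_{x\in[-R,R]}|f(x+h_k)-f(h_k)|+|f(h_k)-g|
\]
combined with the previous step and the hypothesis \eqref{eq:SO-uniform-1} yields \eqref{eq:SO-uniform-2}.

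The only step that requires care is the elementary bookkeeping in choosing $x_k$ so that both endpoints $h_k\pm R$ lie inside $[x_k,2x_k]$ (resp.\ $[-2x_k,-x_k]$) while ensuring $x_k\to+\infty$; there is no genuine analytic obstacle since the statement is essentially the content of the $SO$ definition applied at the scale of a single bounded window sliding along the sequence $\{h_k\}$.
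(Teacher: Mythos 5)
Your proof is correct and follows essentially the same route as the paper: the triangle inequality splits off the term $|f(h_k)-g|$, and the interval $[h_k-R,h_k+R]$ is shown to lie, for large $k$, inside a dyadic window $[x_k,2x_k]$ (resp.\ $[-2x_k,-x_k]$) with $x_k\to+\infty$, so the oscillation term vanishes by the definition of $SO$. The paper anchors the window at $h_k+R$ in the $-\infty$ case while you anchor at $h_k-R$ (resp.\ use $x_k=(h_k+R)/2$ for $+\infty$), but this is only a cosmetic difference in bookkeeping.
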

%%%%%%%%%%%%%%%%%%%%%%%%%%%%%%%%%%%%%%%%%%%%%%%%%%%%%%%%%%%%%%%%%%%%%%%%%%%
\begin{proof}
For every $k\in\N$,
%%%
\begin{align}
\sup_{x\in[-R,R]}|f(x+h_k)-g|
&\le
\sup_{x\in[-R,R]}|f(x+h_k)-f(h_k)|+|f(h_k)-g|
\nonumber
\\
&\le
\sup_{x,y\in[h_k-R,h_k+R]}|f(x)-f(y)|+|f(h_k)-g|.
\label{eq:SO-uniform-3}
\end{align}
%%%
Let for definiteness $\lim_{k\to\infty}h_k=-\infty$. Then there exists a $k_0\in\N$
such that $h_k\le -3R$ for all $k\ge k_0$. Therefore $2(h_k+R)\le h_k-R$ and
\[
[2(h_k+R),h_k+R]\supset [h_k-R,h_k+R].
\]
Thus for $k\ge k_0$,
%%%
\begin{align}
\sup_{x,y\in[h_k-R,h_k+R]} &|f(x)-f(y)|
\le
\sup_{x,y\in[2(h_k+R),h_k+R]}|f(x)-f(y)|
\nonumber
\\
&\le
\operatorname{osc}\big(f,[2(h_k+R),h_k+R]\cup[-(h_k+R),-2(h_k+R)]\big).
\label{eq:SO-uniform-4}
\end{align}
%%%
Since $f\in SO$, the latter oscillation tends to zero as $k\to\infty$.
Combining this observation with \eqref{eq:SO-uniform-1} and
\eqref{eq:SO-uniform-3}--\eqref{eq:SO-uniform-4}, we arrive at \eqref{eq:SO-uniform-2}.
\qed
\end{proof}
%%%%%%%%%%%%%%%%%%%%%%%%%%%%%%%%%%%%%%%%%%%%%%%%%%%%%%%%%%%%%%%%%%%%%%%%%%%
\subsection{Lemma on an implicit sequence}
We will need the following result from Elementary Calculus.
Put $\R_+:=(0,+\infty)$ and $\R_-:=(-\infty,0)$.
%%%%%%%%%%%%%%%%%%%%%%%%%%%%%%%%%%%%%%%%%%%%%%%%%%%%%%%%%%%%%%%%%%%%%%%%%%%
\begin{lemma}\label{le:implicit}
Let $F:\R_+\times (\N\cup\{\infty\})\to\R_+$ be a function such that
%%%
\begin{enumerate}[(i)]
\item
for every $k\in\N\cup\{\infty\}$, the function $F(\cdot,k)$ is continuous
and strictly decreasing;

\item
for every $\lambda\in\R_+$,
%%%
\begin{equation}\label{eq:implicit-1}
\lim_{k\to\infty}F(\lambda,k)=F(\lambda,\infty).
\end{equation}
\end{enumerate}
%%%
If $F(\lambda_\infty,\infty)=1$ for some $\lambda_\infty\in\R_+$, then there
exists a number $k_0\in\N$ and a unique sequence $\{\lambda(k)\}_{k=k_0}^\infty$
such that $F(\lambda(k),k)=1$ for all $k\ge k_0$ and
%%%
\begin{equation}\label{eq:implicit-2}
\lim_{k\to\infty}\lambda(k)=\lambda_\infty.
\end{equation}
\end{lemma}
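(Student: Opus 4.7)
The plan is to reduce the statement to a standard intermediate value argument combined with the pointwise convergence hypothesis (ii). Uniqueness of $\lambda(k)$ for each $k$, once existence is established, is an immediate consequence of hypothesis (i): the strictly decreasing function $F(\cdot,k)$ can attain the value $1$ at most once on $\R_+$.

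For existence, I would fix an arbitrary $\eps_0\in(0,\lambda_\infty)$. Since $F(\cdot,\infty)$ is strictly decreasing and $F(\lambda_\infty,\infty)=1$, one has
\[
F(\lambda_\infty-\eps_0,\infty)>1>F(\lambda_\infty+\eps_0,\infty).
\]
Applying hypothesis (ii) at the two points $\lambda_\infty\pm\eps_0$, I would choose $k_0\in\N$ so large that for every $k\ge k_0$,
\[
F(\lambda_\infty-\eps_0,k)>1>F(\lambda_\infty+\eps_0,k).
\]
The intermediate value theorem, applied to the continuous function $F(\cdot,k)$ on $[\lambda_\infty-\eps_0,\lambda_\infty+\eps_0]$, then produces $\lambda(k)$ in this interval with $F(\lambda(k),k)=1$; this $\lambda(k)$ is unique by the observation above.

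For the limit \eqref{eq:implicit-2}, I would repeat the bracketing argument with an arbitrary $\eps\in(0,\eps_0)$ in place of $\eps_0$, obtaining an index $k_1(\eps)\ge k_0$ such that the unique solution $\lambda(k)$ lies in $(\lambda_\infty-\eps,\lambda_\infty+\eps)$ for all $k\ge k_1(\eps)$. There is no serious obstacle here: the only points requiring attention are that $\lambda_\infty\pm\eps_0$ must lie in $\R_+$ (handled by the restriction $\eps_0<\lambda_\infty$) and that hypothesis (ii) is invoked only at the two fixed points $\lambda_\infty\pm\eps_0$ (respectively $\lambda_\infty\pm\eps$), so no uniformity of pointwise convergence in $\lambda$ is needed.
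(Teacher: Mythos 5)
Your proposal is correct and follows essentially the same route as the paper's own proof: bracketing $\lambda_\infty$ by two points $\lambda_\infty\pm\eps$ where strict monotonicity of $F(\cdot,\infty)$ forces values on opposite sides of $1$, transferring these strict inequalities to $F(\cdot,k)$ for large $k$ via the pointwise convergence (ii), and then applying the intermediate value theorem together with strict monotonicity of $F(\cdot,k)$ to get a unique solution in $(\lambda_\infty-\eps,\lambda_\infty+\eps)$, which yields the limit. The only cosmetic difference is that the paper fixes $\eps\in(0,\lambda_\infty/2]$ and quantifies the convergence by halving the gaps $1-F(\lambda_\infty+\eps,\infty)$ and $F(\lambda_\infty-\eps,\infty)-1$, while you invoke convergence at the two fixed points directly; the content is identical.
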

%%%%%%%%%%%%%%%%%%%%%%%%%%%%%%%%%%%%%%%%%%%%%%%%%%%%%%%%%%%%%%%%%%%%%%%%%%%%
\begin{proof}
The proof is developed by analogy with the proof of the lemma from
\cite[Section~41.1]{Ku03}.

Let $\eps\in(0,\lambda_\infty/2]$. Since $F(\cdot,\infty)$ is strictly decreasing,
%%%
\begin{equation}\label{eq:implicit-3}
F(\lambda_\infty+\eps,\infty)<F(\lambda_\infty,\infty)=1<F(\lambda_\infty-\eps,\infty).
\end{equation}
%%%
From \eqref{eq:implicit-1} it follows that there exist $k_+(\eps),k_-(\eps)\in\N$
such that
%%%
\begin{equation}\label{eq:implicit-4}
|F(\lambda_\infty+\eps,\infty)-F(\lambda_\infty+\eps,k)|<\frac{1-F(\lambda_\infty+\eps,\infty)}{2}
\end{equation}
%%%
for $k\ge k_+(\eps)$ and
%%%
\begin{equation}\label{eq:implicit-5}
|F(\lambda_\infty-\eps,\infty)-F(\lambda_\infty-\eps,k)|<\frac{F(\lambda_\infty-\eps,\infty)-1}{2}
\end{equation}
%%%
for $k\ge k_-(\eps)$. Let
\[
k_0(\eps):=\max\{k_-(\eps),k_+(\eps)\},\quad
k_0:=k_0(\lambda_\infty/2).
\]
Taking into account \eqref{eq:implicit-3}, we obtain from
\eqref{eq:implicit-4}--\eqref{eq:implicit-5} that
%%%
\begin{align*}
F(\lambda_\infty+\eps,k)
&<
\frac{1-F(\lambda_\infty+\eps,\infty)}{2}+F(\lambda_\infty+\eps,\infty)
=\frac{1+F(\lambda_\infty+\eps,\infty)}{2}
<1,
\\
F(\lambda_\infty-\eps,k)
&>
F(\lambda_\infty-\eps,\infty)-\frac{F(\lambda_\infty-\eps,\infty)-1}{2}
=
\frac{1+F(\lambda_\infty-\eps,\infty)}{2}
>1.
\end{align*}
%%%
Thus, for all $k\ge k_0(\eps)$,
%%%
\begin{equation}\label{eq:implicit-6}
F(\lambda_\infty+\eps,k)<1<F(\lambda_\infty-\eps,k).
\end{equation}
%%%
Since $F(\cdot,k)$ is continuous in the first variable for every fixed $k$,
from \eqref{eq:implicit-6} we see, by the Bolzano-Cauchy intermediate value
theorem, that there exists a $\lambda(k)$ such that $F(\lambda(k),k)=1$
and
%%%
\begin{equation}\label{eq:implicit-7}
\lambda_\infty-\eps<\lambda(k)<\lambda_\infty+\eps.
\end{equation}
%%%
The value $\lambda(k)$ is unique for every $k$ because $F(\cdot,k)$
is strictly decreasing. Thus, for every $\eps\in(0,\infty/2]$, there
exists a number $k_0(\eps)\in\N$ such that for all $k\ge k_0(\eps)$,
inequality \eqref{eq:implicit-7} holds, which implies \eqref{eq:implicit-2}.
\qed
\end{proof}
%%%%%%%%%%%%%%%%%%%%%%%%%%%%%%%%%%%%%%%%%%%%%%%%%%%%%%%%%%%%%%%%%%%%%%%%%%%

%%%%%%%%%%%%%%%%%%%%%%%%%%%%%%%%%%%%%%%%%%%%%%%%%%%%%%%%%%%%%%%%%%%%%%%%%%%
\section{Norms of translations of decaying continuous functions}
\label{sec:key}
%%%%%%%%%%%%%%%%%%%%%%%%%%%%%%%%%%%%%%%%%%%%%%%%%%%%%%%%%%%%%%%%%%%%%%%%%%%
\subsection{Technical lemma}
We start with the following technical statement.
%%%%%%%%%%%%%%%%%%%%%%%%%%%%%%%%%%%%%%%%%%%%%%%%%%%%%%%%%%%%%%%%%%%%%%%%%%%
\begin{lemma}\label{le:technical}
Suppose $p:\R\to(1,\infty)$ belongs to $SO$ and satisfies
\eqref{eq:exponents}. Let $\{h_k\}_{k=1}^\infty\subset\R$ be a
sequence tending to $+\infty$ {\rm(}resp. to
$-\infty${\rm)} and such that the limit
\[
\lim\limits_{k\to\infty}p(h_k)=:q
\]
exists.  Suppose $R>0$ and $\{w_k\}_{k=1}^\infty\subset C(\R)$ is
a sequence which converges pointwise to a function $w\in C(\R)$ on the
segment $[-R,R]$. If there are positive constants $C_1<C_2$ and a measurable
set $\Delta\subset[-R,R]$ such that for all sufficiently large $k$ and all
$x\in[-R,R]\setminus\Delta$,
%%%
\begin{equation}\label{eq:technical-1}
C_1\le w_k(x)\le C_2,\quad C_1\le w(x)\le C_2,
\end{equation}
%%%
then for every $\lambda\in\R_+$,
%%%
\begin{equation}\label{eq:technical-2}
\lim_{k\to\infty}\int_{[-R,R]\setminus\Delta}\left|\frac{w_k(x)}{\lambda}\right|^{p(x+h_k)}dx
=
\int_{[-R,R]\setminus\Delta}\left|\frac{w(x)}{\lambda}\right|^{q}dx.
\end{equation}
\end{lemma}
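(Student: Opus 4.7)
The plan is straightforward: the right-hand side of \eqref{eq:technical-2} arises as the limit of the left-hand side by the bounded convergence theorem, applied on the set $[-R,R]\setminus\Delta$, which has finite Lebesgue measure (being a subset of $[-R,R]$). Two ingredients are needed: uniform convergence of the exponents $p(\cdot+h_k)$ to $q$ on $[-R,R]$, and pointwise convergence of the integrand on $[-R,R]\setminus\Delta$, together with a uniform integrable majorant.

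For the first ingredient, $p\in SO$ and $p(h_k)\to q$ together put us exactly in the setting of Lemma~\ref{le:SO-uniform}, which yields
$$\lim_{k\to\infty}\sup_{x\in[-R,R]}|p(x+h_k)-q|=0.$$
In particular, $p(x+h_k)\to q$ as $k\to\infty$ for every fixed $x\in[-R,R]$.

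For the second ingredient, fix $x\in[-R,R]\setminus\Delta$. By \eqref{eq:technical-1}, for sufficiently large $k$ both $w_k(x)/\lambda$ and $w(x)/\lambda$ lie in the compact interval $[C_1/\lambda,C_2/\lambda]\subset\R_+$. Since the map $(a,b)\mapsto a^b$ is continuous on $\R_+\times\R$, the pointwise convergence $w_k(x)\to w(x)$ together with $p(x+h_k)\to q$ yields
$$\left|\frac{w_k(x)}{\lambda}\right|^{p(x+h_k)}\longrightarrow \left|\frac{w(x)}{\lambda}\right|^{q}.$$
Moreover, since $p_-\le p(x+h_k)\le p_+$, the integrand is bounded on $[-R,R]\setminus\Delta$, uniformly in $k$, by the constant
$$M:=\max\left\{(C_1/\lambda)^{p_-},(C_1/\lambda)^{p_+},(C_2/\lambda)^{p_-},(C_2/\lambda)^{p_+}\right\}.$$

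With this uniform bound on a set of finite measure, the bounded convergence theorem delivers \eqref{eq:technical-2}. There is no substantial obstacle; the only delicate step is upgrading the pointwise limit $p(h_k)\to q$ to uniform convergence of $p(\cdot+h_k)$ on the translated interval $[h_k-R,h_k+R]$, but this is precisely the service provided by the slow-oscillation lemma, Lemma~\ref{le:SO-uniform}.
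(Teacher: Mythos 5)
Your proof is correct and follows essentially the same route as the paper: uniform convergence of $p(\cdot+h_k)$ to $q$ on $[-R,R]$ via Lemma~\ref{le:SO-uniform}, pointwise convergence of the integrand on $[-R,R]\setminus\Delta$, a uniform bound coming from \eqref{eq:technical-1} and $p_-\le p\le p_+$, and then the bounded convergence theorem on a set of finite measure. The only difference is that you obtain the pointwise convergence of $\left|w_k(x)/\lambda\right|^{p(x+h_k)}$ from the joint continuity of $(a,b)\mapsto a^b$ on $\R_+\times\R$, whereas the paper derives it through a quantitative mean-value-theorem estimate; since that quantitative bound is not used elsewhere, your shortcut is perfectly adequate.
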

%%%%%%%%%%%%%%%%%%%%%%%%%%%%%%%%%%%%%%%%%%%%%%%%%%%%%%%%%%%%%%%%%%%%%%%%%%%
\begin{proof}
The proof is based on the Lebesgue bounded convergence theorem
(see, e.g., \cite[Theorem~10.29]{A74}). Let us show that for all
$\lambda\in\R_+$ and all $x\in[-R,R]\setminus\Delta$,
%%%
\begin{equation}\label{eq:technical-3}
\lim_{k\to\infty}\left|\frac{w_k(x)}{\lambda}\right|^{p(x+h_k)}=
\left|\frac{w(x)}{\lambda}\right|^q.
\end{equation}
%%%
By the mean value theorem,
\[
\begin{split}
\left|\frac{w_k(x)}{\lambda}\right|^{p(x+h_k)}-\left|\frac{w(x)}{\lambda}\right|^q
&=
\exp\left(p(x+h_k)\log\left|\frac{w_k(x)}{\lambda}\right|\right)
-
\exp\left(q\log\left|\frac{w(x)}{\lambda}\right|\right)
\\
&=e^\xi
\left(
p(x+h_k)\log\left|\frac{w_k(x)}{\lambda}\right|
-
q\log\left|\frac{w(x)}{\lambda}\right|
\right),
\end{split}
\]
where $\xi$ is some real number between
\[
p(x+h_k)\log\left|\frac{w_k(x)}{\lambda}\right|
\quad\mbox{and}\quad
q\log\left|\frac{w(x)}{\lambda}\right|.
\]
Taking into account that there exists a $k_0\in\N$ such that for all $k\ge k_0$
inequalities \eqref{eq:technical-1} are fulfilled, we have
\[
p(x+h_k)\log\left|\frac{w_k(x)}{\lambda}\right|
\le
p(x+h_k)\log\frac{C_2}{\lambda}
\le
p(x+h_k)\left|\log\frac{C_2}{\lambda}\right|
\le
p_+\left|\log\frac{C_2}{\lambda}\right|
\]
and
\[
q\log\left|\frac{w(x)}{\lambda}\right|
\le
q\log\frac{C_2}{\lambda}
\le
q\left|\log\frac{C_2}{\lambda}\right|
\le
p_+\left|\log\frac{C_2}{\lambda}\right|.
\]
Hence
\[
\xi\le p_+\left|\log\frac{C_2}{\lambda}\right|
\quad\mbox{and}\quad
e^\xi\le\exp\left(p_+\left|\log\frac{C_2}{\lambda}\right|\right)=:C_3.
\]
Then for all $k\ge k_0$,
%%%
\begin{align}
\left|
\left|\frac{w_k(x)}{\lambda}\right|^{p(x+h_k)}
-
\left|\frac{w(x)}{\lambda}\right|^q
\right|
\le&
C_3\left|
p(x+h_k)\log\left|\frac{w_k(x)}{\lambda}\right|
-
q\log\left|\frac{w(x)}{\lambda}\right|
\right|
\nonumber
\\
\le&
C_3|p(x+h_k)-q|\,\left|\log\left|\frac{w_k(x)}{\lambda}\right|\right|
+
C_3q\left|\log\left|\frac{w_k(x)}{\lambda}\right|-\log\left|\frac{w(x)}{\lambda}\right|\right|.
\label{eq:technical-4}
\end{align}
%%%
Further, we have
\[
\begin{split}
\log\left|\frac{w_k(x)}{\lambda}\right|
&\le
\log\frac{C_2}{\lambda}
\le
\left|\log\frac{C_2}{\lambda}\right|
\le
\max\left\{\left|\log\frac{C_1}{\lambda}\right|,\left|\log\frac{C_2}{\lambda}\right|\right\},
\\
\log\left|\frac{w_k(x)}{\lambda}\right|
&\ge
\log\frac{C_1}{\lambda}
\ge
-\left|\log\frac{C_1}{\lambda}\right|
\ge
-\max\left\{\left|\log\frac{C_1}{\lambda}\right|,\left|\log\frac{C_2}{\lambda}\right|\right\}.
\end{split}
\]
Therefore, for all $k\ge k_0$,
%%%
\begin{equation}\label{eq:technical-5}
\left|
\log\left|\frac{w_k(x)}{\lambda}\right|
\right|
\le
\max\left\{\left|\log\frac{C_1}{\lambda}\right|,\left|\log\frac{C_2}{\lambda}\right|\right\}
=:C_4<\infty.
\end{equation}
%%%
Applying the main value theorem once again, we see that
\[
\log\left|\frac{w_k(x)}{\lambda}\right|-\log\left|\frac{w(x)}{\lambda}\right|
=\frac{1}{\zeta}\big(|w_k(x)|-|w(x)|\big),
\]
where $\zeta$ is some number between $|w_k(x)|$ and $|w(x)|$. Hence $\zeta\in[C_1,C_2]$.
Then for all $k\ge k_0$,
%%%
\begin{equation}\label{eq:technical-6}
\left|\log\left|\frac{w_k(x)}{\lambda}\right|-\log\left|\frac{w(x)}{\lambda}\right|\right|
\le\frac{1}{C_1}\big||w_k(x)|-|w(x)|\big|
\le\frac{1}{C_1}|w_k(x)-w(x)|.
\end{equation}
%%%
Combining \eqref{eq:technical-4}--\eqref{eq:technical-6}, we arrive at
%%%
\begin{equation}\label{eq:technical-7}
\left|
\left|\frac{w_k(x)}{\lambda}\right|^{p(x+h_k)}
-
\left|\frac{w(x)}{\lambda}\right|^q
\right|
\le
C_3C_4|p(x+h_k)-q|+\frac{C_3q}{C_1}|w_k(x)-w(x)|
\end{equation}
%%%
for all $k\ge k_0$. From Lemma~\ref{le:SO-uniform} it follows that
%%%
\begin{equation}\label{eq:technical-8}
\lim_{k\to\infty}|p(x+h_k)-q|=0.
\end{equation}
%%%
But it is given that
%%%
\begin{equation}\label{eq:technical-9}
\lim_{k\to\infty}|w_k(x)-w(x)|=0.
\end{equation}
%%%
Thus, from inequality \eqref{eq:technical-7} and equalities
\eqref{eq:technical-8}--\eqref{eq:technical-9} we immediately get
\eqref{eq:technical-3}.

Further, for every $x\in [-R,R]\setminus\Delta$ and $k\ge k_0$,
\[
\left|\frac{w_k(x)}{\lambda}\right|^{p(x+h_k)}
\le
\left(\frac{C_2}{\lambda}\right)^{p(x+h_k)}
\le
\left(\max\left\{1,\frac{C_2}{\lambda}\right\}\right)^{p(x+h_k)}
\le
\left(\max\left\{1,\frac{C_2}{\lambda}\right\}\right)^{p_+}
\]
because $p(x+h_k)\le p_+$. Thus, the sequence $|w_k(x)/\lambda|^{p(x+h_k)}$
is uniformly bounded and converges pointwise to $|w(x)/\lambda|^q$. By the
Lebesgue bounded convergence theorem, this yields \eqref{eq:technical-2}.
\qed
\end{proof}
%%%%%%%%%%%%%%%%%%%%%%%%%%%%%%%%%%%%%%%%%%%%%%%%%%%%%%%%%%%%%%%%%%%%%%%%%%%
\subsection{Key lemma}
The key to the proof of Theorem~\ref{th:main} is the following
generalization of the one-dimensional version of
\cite[Proposition~6.3]{RS08}. Note that conditions on $p$ imposed in
\cite{RS08} imply that $p\in C(\dot{\R})$. For the
readers' convenience, we provide here a detailed proof in our more
general situation, though the outline remains more or less the same
as in \cite{RS08}.
%%%%%%%%%%%%%%%%%%%%%%%%%%%%%%%%%%%%%%%%%%%%%%%%%%%%%%%%%%%%%%%%%%%%%%%%%%%
\begin{lemma}\label{le:key}
Suppose $p:\R\to(1,\infty)$ belongs to $SO$ and satisfies
\eqref{eq:exponents}. Let $\{h_k\}_{k=1}^\infty\subset\R$ be a
sequence tending to  $+\infty$ {\rm(}resp. to
$-\infty${\rm)} and such that the limit
\[
\lim\limits_{k\to\infty}p(h_k)=:q
\]
exists. Suppose $w\in C(\R)$ and
$\{w_k\}_{k=1}^\infty\subset C(\R)$ are such that
%%%
\begin{enumerate}[(i)]
\item
for all $x\in\R$,
\[
\lim_{k\to\infty}w_k(x)=w(x),
\]
and this convergence is uniform on each closed segment $J\subset\R_+$;

\item
there exists a constant $C>0$ such that for all $k\in\N$
and $x\in\R$,
\[
|w(x)|\le \frac{C}{1+|x|},\quad |w_k(x)|\le \frac{C}{1+|x|}.
\]
\end{enumerate}
%%%
Then
\begin{equation}\label{eq:key-1}
\lim_{k\to\infty}\|V_{h_k}w_k\|_{p(\cdot)}=\|w\|_q.
\end{equation}
\end{lemma}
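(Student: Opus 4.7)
The plan is to recast \eqref{eq:key-1} as an instance of Lemma~\ref{le:implicit}. For $k\in\N$ set
\[
F(\lambda,k):=I_{p(\cdot)}(V_{h_k}w_k/\lambda)=\int_\R\Bigl|\frac{w_k(y)}{\lambda}\Bigr|^{p(y+h_k)}dy,
\qquad
F(\lambda,\infty):=\int_\R|w(y)/\lambda|^q\,dy,
\]
where the second equality in the first formula comes from the substitution $x=y+h_k$. Hypothesis (ii) together with $p_->1$ ensures that each of these integrals is finite. Each $F(\cdot,k)$, $k\in\N\cup\{\infty\}$, is continuous and strictly decreasing on $\R_+$ (dominated convergence with a majorant extracted from (ii); strict decrease of $\lambda\mapsto\lambda^{-p(y+h_k)}$ combined with $w_k\not\equiv 0$), with $F(\lambda,k)\to\infty$ as $\lambda\to 0^+$ and $F(\lambda,k)\to 0$ as $\lambda\to\infty$. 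By the characterizations of the Luxemburg and $L^q$ norms, the unique roots $\lambda(k),\lambda_\infty$ of $F(\cdot,k)=1$ are exactly $\lambda(k)=\|V_{h_k}w_k\|_{p(\cdot)}$ and $\lambda_\infty=\|w\|_q$, so \eqref{eq:key-1} is the conclusion $\lambda(k)\to\lambda_\infty$ of Lemma~\ref{le:implicit}. (The degenerate case $w\equiv 0$ is handled directly: (i) and (ii) together with dominated convergence give $F(\lambda,k)\to 0$ for every $\lambda$, whence $\|V_{h_k}w_k\|_{p(\cdot)}\to 0=\|w\|_q$.)

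The main step is the remaining hypothesis of Lemma~\ref{le:implicit}, namely $F(\lambda,k)\to F(\lambda,\infty)$ for each fixed $\lambda>0$, which I would establish by the Lebesgue dominated convergence theorem applied on $\R$. The pointwise limit
\[
\Bigl|\frac{w_k(y)}{\lambda}\Bigr|^{p(y+h_k)}\longrightarrow\Bigl|\frac{w(y)}{\lambda}\Bigr|^q
\]
at every $y\in\R$ uses $w_k(y)\to w(y)$ from (i), $p(y+h_k)\to q$ from Lemma~\ref{le:SO-uniform} applied with $R=|y|$, and joint continuity of $(t,s)\mapsto t^s$ on $(0,\infty)\times(0,\infty)$; the case $w(y)=0$ is treated separately via $|w_k(y)/\lambda|^{p(y+h_k)}\le|w_k(y)/\lambda|^{p_-}\to 0$, valid once $|w_k(y)|<\lambda$. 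For the $k$-independent integrable majorant, set $A:=\max(1,C/\lambda)$; then on $|y|\ge A$ hypothesis (ii) yields $|w_k(y)|/\lambda\le C/(\lambda(1+|y|))\le 1$, so the integrand is dominated by $(C/(\lambda(1+|y|)))^{p_-}$, while on $|y|<A$ the crude bound $|w_k|\le C$ gives integrand at most $\max(1,C/\lambda)^{p_+}$. Therefore
\[
\phi(y):=\max(1,C/\lambda)^{p_+}\chi_{[-A,A]}(y)+\Bigl(\frac{C}{\lambda(1+|y|)}\Bigr)^{p_-}\chi_{\R\setminus[-A,A]}(y)
\]
dominates $|w_k(y)/\lambda|^{p(y+h_k)}$ for every $k$ and is integrable thanks to $p_->1$. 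Dominated convergence yields $F(\lambda,k)\to F(\lambda,\infty)$.

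The crucial technical point is exactly this construction of a $k$-uniform integrable majorant; balancing the polynomial decay supplied by (ii) against the variable exponent $p(y+h_k)$ requires precisely the standing assumption $p_->1$ to render the tail $\int_A^\infty(1+y)^{-p_-}dy$ convergent. As an alternative that stays closer to the machinery already developed in this section, one may split $\R=([-R,R]\setminus\Delta_\eps)\sqcup\Delta_\eps\sqcup(\R\setminus[-R,R])$ with $\Delta_\eps:=\{y\in[-R,R]:|w(y)|<\eps\}\cup[-\eps,\eps]$, apply Lemma~\ref{le:technical} with $C_1=\eps/2$ and $C_2=C$ on the middle piece (the uniform convergence on compact subsets of $\R_+$ from (i), and its analogue on $\R_-$, upgrades $|w|\ge\eps$ to $|w_k|\ge\eps/2$), bound the tail as above, and bound the $\Delta_\eps$-piece by $O(\eps^{p_-}R+\eps\max(1,C/\lambda)^{p_+})$; either route delivers $F(\lambda,k)\to F(\lambda,\infty)$ and hence, via Lemma~\ref{le:implicit}, the claim \eqref{eq:key-1}.
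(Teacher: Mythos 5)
Your proposal is correct, and its overall skeleton is the same as the paper's: rewrite the Luxemburg norm $\|V_{h_k}w_k\|_{p(\cdot)}$ as the unique root $\lambda(k)$ of the modular equation $F(\lambda,k)=1$, prove the pointwise-in-$\lambda$ convergence $F(\lambda,k)\to F(\lambda,\infty)$, treat $w\equiv 0$ separately, and conclude via Lemma~\ref{le:implicit}. Where you genuinely diverge is the proof of $F(\lambda,k)\to F(\lambda,\infty)$: the paper splits $\R$ into the tail $|x|>R$, the exceptional set $\Delta_\delta=\{|w|\le 2\delta\}$, and the remaining part of $[-R,R]$, estimating the first two pieces by hand and invoking Lemma~\ref{le:technical} (a bounded-convergence argument needing two-sided bounds $C_1\le|w_k|,|w|\le C_2$, hence the excision of the zero set of $w$ and the uniform closeness $|w_k-w|<\delta$ on $[-R,R]$); you instead run a single dominated convergence argument on all of $\R$, proving the pointwise limit directly (with the $w(y)=0$ case absorbed by $|w_k(y)/\lambda|^{p(y+h_k)}\le|w_k(y)/\lambda|^{p_-}$) and exhibiting the $k$-uniform majorant $\phi$ built from hypothesis (ii) and $p_->1$. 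Your route is shorter and more elementary: it bypasses Lemma~\ref{le:technical} entirely, uses only the pointwise part of hypothesis (i) (indeed the paper's own proof invokes uniform convergence on all of $[-R,R]$, though (i) formally states it only for segments of $\R_+$ -- an issue your argument sidesteps), and makes explicit that the decay $C/(1+|x|)$ together with $p_->1$ is exactly what controls the tail. What the paper's decomposition buys is a template that localizes the delicate behaviour (small values of $w$, oscillation of $p$) to explicit pieces with quantitative $\eps$-estimates, in the spirit of \cite{RS08}. Two cosmetic points: in the nondegenerate case you should, as the paper does, discard the at most finitely many indices with $w_k\equiv 0$ (pointwise convergence to $w\not\equiv 0$ guarantees there are only finitely many) so that each $F(\cdot,k)$ is indeed positive and strictly decreasing; and for $y=0$ apply Lemma~\ref{le:SO-uniform} with any $R>0$ rather than $R=|y|$. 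Neither affects the validity of the argument.
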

%%%%%%%%%%%%%%%%%%%%%%%%%%%%%%%%%%%%%%%%%%%%%%%%%%%%%%%%%%%%%%%%%%%%%%%%%%%
\begin{proof}
For $\lambda>0$ and $k\in\N$, put
\[
F(\lambda,k)
:=
\int_\R\left|\frac{(V_{h_k}w_k)(x)}{\lambda}\right|^{p(x)}dx
=
\int_\R\left|\frac{w_k(x)}{\lambda}\right|^{p(x+h_k)}dx,
\quad
F(\lambda,\infty):=\int_\R\left|\frac{w(x)}{\lambda}\right|^qdx=\lambda^{-q}\|w\|_q^q.
\]
First, let us show that for every $\lambda>0$,
%%%
\begin{equation}\label{eq:key-2}
\lim_{k\to\infty}F(\lambda,k)=F(\lambda,\infty).
\end{equation}
%%%
Fix some numbers $R>0$ and $\delta>0$. We will specify the choice of $R$ and
$\delta$ later. Consider the (possibly empty) set
%%%
\begin{equation}\label{eq:key-3}
\Delta_\delta:=\big\{x\in[-R,R]:\ |w(x)|\le 2\delta\big\}
\end{equation}
%%%
and put
\[
\begin{array}{lll}
\displaystyle
T_R(\lambda,k)
:=
\int_{|x|>R}\left|\frac{w_k(x)}{\lambda}\right|^{p(x+h_k)}dx,
&&
\displaystyle
T_R(\lambda,\infty)
:=
\int_{|x|>R}\left|\frac{w(x)}{\lambda}\right|^q dx,
\\[3mm]
\displaystyle
L_{\delta,R}(\lambda,k)
:=
\int_{\Delta_\delta}\left|\frac{w_k(x)}{\lambda}\right|^{p(x+h_k)}dx,
&&
\displaystyle
L_{\delta,R}(\lambda,\infty)
:=
\int_{\Delta_\delta}\left|\frac{w(x)}{\lambda}\right|^q dx,
\end{array}
\]
%%%
and
\[
D_{\delta,R}(\lambda,k):=
\left|\int_{[-R,R]\setminus\Delta_\delta}
\left|\frac{w_k(x)}{\lambda}\right|^{p(x+h_k)}dx
-
\int_{[-R,R]\setminus\Delta_\delta}
\left|\frac{w(x)}{\lambda}\right|^q dx
\right|.
\]
Here ``$T$" is for ``tail", ``$L$" is for ``little", and ``$D$" is for ``difference".
It is clear that
%%%
\begin{equation}\label{eq:key-4}
|F(\lambda,k)-F(\lambda,\infty)|
\le
T_R(\lambda,k)+T_R(\lambda,\infty)
+
L_{\delta,R}(\lambda,k)+L_{\delta,R}(\lambda,\infty)+
D_{\delta,R}(\lambda,k).
\end{equation}
%%%
Fix $\eps>0$. First we will show that it is possible to choose $R$ so large
that for $k\in\N$,
%%%
\begin{equation}\label{eq:key-5}
T_R(\lambda,k)+T_R(\lambda,\infty)<\eps/3.
\end{equation}
%%%
Let for the moment $R\ge C/\lambda$. Then from \eqref{eq:exponents} and
hypothesis (ii) we obtain
\[
\left|\frac{w_k(x)}{\lambda}\right|^{p(x+h_k)}
\le
\left(\frac{C}{\lambda|x|}\right)^{p(x+h_k)}
\le
\left(\frac{C}{\lambda|x|}\right)^{p_-}
\quad\mbox{for}\quad
|x|\ge R.
\]
Then for $\lambda>0$, $k\in\N$, and $R\ge C/\lambda$,
%%%
\begin{equation}\label{eq:key-6}
T_R(\lambda,k)
\le
\int_{|x|>R}\left(\frac{C}{\lambda|x|}\right)^{p_-}dx
=
2\left(\frac{C}{\lambda}\right)^{p_-}\int_R^{+\infty}\frac{dx}{x^{p_-}}
=
\frac{2}{p_--1}\left(\frac{C}{\lambda}\right)^{p_-}R^{1-p_-}
\end{equation}
%%%
and analogously
%%%
\begin{equation}\label{eq:key-7}
T_R(\lambda,\infty)
\le
\frac{2}{p_--1}\left(\frac{C}{\lambda}\right)^{p_-}R^{1-p_-}
\end{equation}
%%%
(recall that $q\ge p_->1$). We choose $R$ as the solution of the equation
%%%
\begin{equation}\label{eq:key-8}
\frac{4}{p_--1}\left(\frac{C}{\lambda}\right)^{p_-}R^{1-p_-}=\frac{\eps}{6}.
\end{equation}
%%%
Then from inequalities \eqref{eq:key-6}--\eqref{eq:key-7} it follows that inequality
\eqref{eq:key-5} holds.

It remains to show that for so chosen $R$ one has $R\ge C/\lambda$ whenever
$\eps$ is sufficiently small. Indeed, from \eqref{eq:key-8} we obtain
%%%
\begin{equation}\label{eq:key-9}
R=\left(\frac{24}{p_--1}\right)^{1/(p_--1)}\left(\frac{C}{\lambda}\right)^{p_-/(p_--1)}
\left(\frac{1}{\eps}\right)^{1/(p_--1)},
\end{equation}
%%%
and $R\ge C/\lambda$ is equivalent to
\[
\left(\frac{24}{p_--1}\right)^{1/(p_--1)}\frac{C}{\lambda}\ge \eps^{1/(p_--1)}.
\]
That is, if
\[
0<\eps\le\frac{24}{p_--1}\left(\frac{C}{\lambda}\right)^{p_--1}=:\eps_1,
\]
then $R$ given by \eqref{eq:key-9} satisfies $R\ge C/\lambda$ and inequality \eqref{eq:key-5}
holds.

Now we will choose $\delta>0$ and $k_0\in\N$ such that for $k\ge k_0$,
%%%
\begin{equation}\label{eq:key-10}
L_{\delta,R}(\lambda,k)+L_{\delta,R}(\lambda,\infty)<\eps/3.
\end{equation}
%%%
Let for the moment $\delta$ is so that $3\delta/\lambda\le 1$. For $R$ and
$\delta$, by hypothesis (i), there exists a $k_0:=k_0(\eps)=k_0(\delta,R)\in\N$
such that for all $x\in[-R,R]$ and all $k\ge k_0$,
\[
|w_k(x)-w(x)|<\delta.
\]
Hence, for all $k\ge k_0$,
%%%
\begin{equation}\label{eq:key-11}
|w(x)|-\delta\le|w_k(x)|\le|w(x)|+\delta.
\end{equation}
%%%
From \eqref{eq:key-3} and \eqref{eq:key-11} we see that for $k\ge k_0$ and $x\in\Delta_\delta$,
\[
\left|\frac{w(x)}{\lambda}\right|\le\frac{2\delta}{\lambda},
\quad
\left|\frac{w_k(x)}{\lambda}\right|\le\frac{3\delta}{\lambda}.
\]
Hence, taking into account that $p(x+h_k)>1$ and $q>1$, we have
for $k\ge k_0$,
%%%
\begin{align}
\label{eq:key-12}
&
L_{\delta,R}(\lambda,k)
\le
\int_{\Delta_\delta}\left(\frac{3\delta}{\lambda}\right)^{p(x+h_k)}dx
\le
\frac{3\delta}{\lambda}\int_{\Delta_\delta}dx
\le
\frac{6\delta R}{\lambda},
\\
\label{eq:key-13}
&
L_{\delta,R}(\lambda,\infty)
\le
\int_{\Delta_\delta}\left(\frac{2\delta}{\lambda}\right)^q dx
\le
\frac{2\delta}{\lambda}\int_{\Delta_\delta}dx
\le
\frac{4\delta R}{\lambda}.
\end{align}
%%%
Let us choose $\delta$ as the solution of the equation
%%%
\begin{equation}\label{eq:key-14}
\frac{10\delta R}{\lambda}=\frac{\eps}{6}.
\end{equation}
%%%
Then from inequalities \eqref{eq:key-12}--\eqref{eq:key-13} it follows that inequality
\eqref{eq:key-10} is fulfilled for all $k\ge k_0$.

It remains to show that we can guarantee that $3\delta/\lambda\le 1$ whenever
$\eps$ is sufficiently small. Indeed, from \eqref{eq:key-9} and \eqref{eq:key-14}
we see that
\[
\frac{3\delta}{\lambda}=\frac{\eps}{20R}=
\frac{\eps}{20}\left(\frac{p_--1}{24}\right)^{1/(p_--1)}\left(\frac{\lambda}{C}\right)^{p_-/(p_--1)}\eps^{1/(p_--1)}\le 1
\]
is equivalent to
\[
\eps^{2/(p_--1)}\le 20\left(\frac{24}{p_--1}\right)^{1/(p_--1)}\left(\frac{C}{\lambda}\right)^{p_-/(p_--1)}.
\]
That is, if
\[
0<\eps\le 20^{(p_--1)/2}\left(\frac{24}{p_--1}\right)^{1/2}\left(\frac{C}{\lambda}\right)^{p_-/2}=:\eps_2,
\]
then $3\delta/\lambda\le 1$. Thus, if $\eps\in(0,\min\{\eps_1,\eps_2\})$,
then we can choose $R>0$ by \eqref{eq:key-9}, $\delta>0$ as the solution of \eqref{eq:key-14},
and then choose a $k_0=k_0(\delta,R)$ such that for all $k\ge k_0$, inequalities
\eqref{eq:key-5} and \eqref{eq:key-10} are fulfilled. From \eqref{eq:key-4}, \eqref{eq:key-5},
and \eqref{eq:key-10} we get
%%%
\begin{equation}\label{eq:key-15}
|F(\lambda,k)-F(\lambda,\infty)|\le 2\eps/3+D_{\delta,R}(\lambda,k)
\quad\mbox{for}\quad k\ge k_0.
\end{equation}
%%%
From \eqref{eq:key-3} and \eqref{eq:key-11} it follows that for $x\in[-R,R]\setminus\Delta_\delta$
and $k\ge k_0$,
\[
2\delta<|w(x)|\le C,
\quad
\delta<|w(x)|\le C.
\]
From Lemma~\ref{le:technical} we deduce that there exists $k_1(\eps)\ge k_0$
such that
%%%
\begin{equation}\label{eq:key-16}
D_{\delta,R}(\lambda,k)<\eps/3\quad\mbox{for}\quad k\ge k_1(\eps).
\end{equation}
%%%
Combining \eqref{eq:key-15} and \eqref{eq:key-16}, we see that for $\eps>0$ there exists a
$k_1(\eps)\in\N$ such that for all $k\ge k_1(\eps)$,
\[
|F(\lambda,k)-F(\lambda,\infty)|<\eps,
\]
which finishes the proof of \eqref{eq:key-2}.

If the limit function $w$ is equal to zero identically on $\R$, then from
equality \eqref{eq:key-2} we have
\[
\lim_{k\to\infty}I_{p(\cdot)}(V_{h_k}w_k)=0.
\]
Then from Theorem~\ref{th:KR}(b) we obtain that
\[
\lim_{k\to\infty}\|V_{h_k}w_k\|_{p(\cdot)}=0=\|w\|_q,
\]
which finishes the proof of the lemma in the case $\|w\|_q=0$.

Assume now that $\|w\|_q>0$. Then, obviously, the function
$F(\lambda,\infty)=\lambda^{-q}\|w\|_q$ is strictly decreasing and continuous
in $\lambda\in\R$. Moreover,
%%%
\begin{equation}\label{eq:key-17}
F(\|w\|_q,\infty)=1.
\end{equation}
%%%
Without loss of generality we may assume that all functions $w_k$ are not
identically zero on $\R$. Let us show that for each $k\in\N$, the function
$F(\lambda,k)$ is strictly decreasing and continuous with respect to
$\lambda\in\R_+$. Clearly, for every $k\in\N$, $x\in\R$, and $\lambda\in\R_+$,
\[
\frac{\partial}{\partial\lambda}\left|\frac{w_k(x)}{\lambda}\right|^{p(x+h_k)}
=
-\frac{p(x+h_k)}{\lambda}\left|\frac{w_k(x)}{\lambda}\right|^{p(x+h_k)}.
\]
Let $[\alpha,\beta]\subset\R_+$ be some segment. It is not difficult to see
that for all $\lambda\in[\alpha,\beta]$,
\[
\left|\frac{\partial}{\partial\lambda}\left|\frac{w_k(x)}{\lambda}\right|^{p(x+h_k)}\right|
\le
\frac{p_+}{\alpha}\left|\frac{w_k(x)}{\lambda}\right|^{p(x+h_k)}
=
\frac{p_+}{\alpha}F(\alpha,k)<\infty.
\]
Therefore, by the theorem on the differentiation under the sign of the
Lebesgue integral (see, e.g., \cite[Theorem~10.39]{A74}), the function
$F(\lambda,k)$ is differentiable in $\lambda\in(\alpha,\beta)$ and
\[
\frac{\partial F}{\partial\lambda}(\lambda,k)=
-\int_\R\frac{p(x+h_k)}{\lambda}\left|\frac{w_k(x)}{\lambda}\right|^{p(x+h_k)}dx.
\]
Since $[\alpha,\beta]$ was chosen arbitrarily, we conclude that $F(\lambda,k)$
is differentiable in the first variable on $\R_+$ and
\[
\frac{\partial F}{\partial\lambda}(\lambda,k)<0\quad\mbox{for}\quad\lambda\in\R_+.
\]
Thus, $F(\lambda,k)$ is strictly increasing and continuous in $\lambda\in\R_+$.
From this observation, \eqref{eq:key-2}, and \eqref{eq:key-17} we obtain in view of
Lemma~\ref{le:implicit} that there exist a number $k_2\in\N$ and a unique
sequence $\{\lambda(k)\}_{k=k_2}^\infty$ such that $F(\lambda(k),k)=1$ for all
$k\ge k_2$ and
%%%
\begin{equation}\label{eq:key-18}
\lim_{k\to\infty}\lambda(k)=\|w\|_q.
\end{equation}
%%%
On the other hand, taking into account that $F(\lambda,k)$ is strictly
decreasing and continuous, we see that
%%%
\begin{equation}\label{eq:key-19}
\|V_{h_k}w_k\|_{p(\cdot)}=\inf\big\{\lambda>0:\ F(\lambda,k)\le 1\big\}=\lambda(k).
\end{equation}
%%%
Combining \eqref{eq:key-18} and \eqref{eq:key-19}, we arrive at \eqref{eq:key-1}.
\qed
\end{proof}
%%%%%%%%%%%%%%%%%%%%%%%%%%%%%%%%%%%%%%%%%%%%%%%%%%%%%%%%%%%%%%%%%%%%%%%%%%%
\section{Proof of the main result}\label{sec:main-proof}
\subsection{Verification of the hypotheses of the key lemma}
We start with the following consequence of the Kronecker theorem on
almost periodic functions (see, e.g., \cite[Theorem~1.12]{BKS02}).
%%%%%%%%%%%%%%%%%%%%%%%%%%%%%%%%%%%%%%%%%%%%%%%%%%%%%%%%%%%%%%%%%%%%%%%%%%
\begin{lemma}[{see \cite[Lemma~10.2]{BKS02}}]
\label{le:Kronecker}
If $a_1,\dots,a_M\in AP_{N\times N}^0$ is a finite collection of almost
periodic polynomials, then there exists a sequence $\{h_m\}_{m=1}^\infty\subset\R$
such that $h_m\to+\infty$ {\rm(}resp. $h_m\to-\infty${\rm)} as $m\to\infty$ and
\[
\lim_{m\to\infty}\|a_j(\cdot+h_m)-a_j(\cdot)\|_{L_{N\times N}^\infty(\R)}=0
\]
for all $j\in\{1,\dots,M\}$.
\end{lemma}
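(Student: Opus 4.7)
The plan is to reduce the statement to a simultaneous Diophantine approximation problem for the finitely many frequencies present in $a_1,\dots,a_M$ and then invoke Kronecker's theorem as cited in \cite[Theorem~1.12]{BKS02}. First I would collect into a single finite set $\{\mu_1,\dots,\mu_L\}\subset\R$ every frequency that appears in any scalar entry of any of the matrix polynomials $a_1,\dots,a_M$; since each entry is a finite sum $\sum_{k}c_k e^{i\lambda_k x}$, this union is finite. Let $C$ be an upper bound for the total absolute value of the coefficients across all entries of all $a_j$.

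The key elementary observation is the identity
\[
e^{i\mu(x+h)}-e^{i\mu x}=e^{i\mu x}\bigl(e^{i\mu h}-1\bigr),
\]
which, combined with the triangle inequality and the equivalence of the matrix norm on $L_{N\times N}^\infty(\R)$ with the entrywise $L^\infty$ norms, yields
\[
\|a_j(\cdot+h)-a_j(\cdot)\|_{L_{N\times N}^\infty(\R)}\le C\max_{1\le l\le L}\bigl|e^{i\mu_l h}-1\bigr|
\]
for every $j\in\{1,\dots,M\}$ and every $h\in\R$. Thus the lemma will follow once I produce a sequence $h_m\to+\infty$ (resp.\ $h_m\to-\infty$) along which $e^{i\mu_l h_m}\to 1$ simultaneously for every $l=1,\dots,L$.

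The existence of such a sequence is precisely the content of Kronecker's theorem applied to the one-parameter subgroup $h\mapsto(e^{i\mu_1 h},\dots,e^{i\mu_L h})$ of the torus $\mathbb{T}^L$: for every $\eps>0$ the set of its common $\eps$-almost periods,
\[
E_\eps:=\bigl\{h\in\R:|e^{i\mu_l h}-1|<\eps\ \text{for all}\ l=1,\dots,L\bigr\},
\]
is relatively dense in $\R$, meaning it meets every interval of length at least some $\ell(\eps)$. In particular $E_\eps$ is unbounded above and below, so for each $m\in\N$ I may select $h_m\in E_{1/m}$ with $h_m>m$ (respectively $h_m<-m$). The resulting sequence tends to $+\infty$ (resp.\ $-\infty$), and the displayed inequality above yields $\|a_j(\cdot+h_m)-a_j(\cdot)\|_{L_{N\times N}^\infty(\R)}\le C/m\to 0$ for every $j$ simultaneously.

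The only substantive ingredient is the relative density of $E_\eps$, which is the substantive content of Kronecker's theorem; everything else is bookkeeping: the extraction of a common finite set of frequencies from the collection $a_1,\dots,a_M$, the observation that the matrix case reduces to the scalar case entry-by-entry, and a triangle inequality. No separate treatment of the matrix structure is needed, so the same proof goes through uniformly in $j$.
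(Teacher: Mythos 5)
Your argument is correct and is essentially the standard deduction from Kronecker's theorem that the paper itself invokes (via \cite[Lemma~10.2 and Theorem~1.12]{BKS02}): reduce to the finite set of frequencies, bound $\|a_j(\cdot+h)-a_j(\cdot)\|_{L_{N\times N}^\infty(\R)}$ by $C\max_l|e^{i\mu_l h}-1|$, and use the relative density (hence unboundedness in both directions) of the set of common $\eps$-almost periods to pick $h_m\to\pm\infty$. The only cosmetic caveat is that the relative density of $E_\eps$ is, strictly speaking, a standard corollary of Kronecker's theorem (returns of the one-parameter subgroup to a neighborhood of the identity in its orbit-closure group are syndetic) rather than its literal statement, but this does not affect the validity of the proof.
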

%%%%%%%%%%%%%%%%%%%%%%%%%%%%%%%%%%%%%%%%%%%%%%%%%%%%%%%%%%%%%%%%%%%%%%%%%%%
The operator $S$ behaves extremely well on smooth compactly supported functions.
More precisely, we have the following.
%%%%%%%%%%%%%%%%%%%%%%%%%%%%%%%%%%%%%%%%%%%%%%%%%%%%%%%%%%%%%%%%%%%%%%%%%%
\begin{lemma}\label{le:Privalov}
If $\varphi\in C_c^\infty(\R)$, then $S\varphi\in C(\R)$ and there is a constant
$C_\varphi>0$ such that
\[
|(S\varphi)(x)|\le\frac{C_\varphi}{1+|x|}\quad(x\in\R).
\]
\end{lemma}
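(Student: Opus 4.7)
The plan is to split the analysis according to whether $x$ lies outside a fixed neighborhood of $\operatorname{supp}\varphi$ or inside it. Let $M>0$ be chosen so that $\operatorname{supp}\varphi\subset[-M,M]$.

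For $|x|>2M$ the kernel $1/(\tau-x)$ is nonsingular on $\operatorname{supp}\varphi$, so the principal value coincides with an ordinary integral and a direct estimate yields
\[
|(S\varphi)(x)|\le\frac{1}{\pi}\int_{-M}^{M}\frac{|\varphi(\tau)|}{|\tau-x|}\,d\tau\le\frac{\|\varphi\|_{L^1(\R)}}{\pi(|x|-M)}\le\frac{2\|\varphi\|_{L^1(\R)}}{\pi|x|},
\]
which gives the required decay. Differentiation under the integral sign also shows that $S\varphi$ is smooth, hence continuous, on $\{|x|>2M\}$.

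For $|x|\le 2M$ I handle the singularity via the standard subtraction trick. Fix a smooth even cutoff $\eta\in C_c^\infty(\R)$ with $\eta\equiv 1$ near $0$ and $\operatorname{supp}\eta\subset[-1,1]$. Since $\eta(\tau-x)/(\tau-x)$ is odd about $\tau=x$, its principal value vanishes, and one obtains
\[
(S\varphi)(x)=\frac{1}{\pi i}\int_\R\frac{\varphi(\tau)-\varphi(x)}{\tau-x}\,\eta(\tau-x)\,d\tau+\frac{1}{\pi i}\int_\R\frac{\bigl(1-\eta(\tau-x)\bigr)\varphi(\tau)}{\tau-x}\,d\tau
\]
as a sum of two absolutely convergent integrals. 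The first integrand is dominated by $\|\varphi'\|_\infty\chi_{[-1,1]}(\tau-x)$ by the mean value theorem; the second has kernel bounded on the support of $1-\eta(\tau-x)$ and is dominated by a constant multiple of $|\varphi(\tau)|\in L^1(\R)$. Both integrands are continuous in $x$, so the dominated convergence theorem yields continuity of $S\varphi$ on $[-2M,2M]$ together with a uniform bound $|(S\varphi)(x)|\le M_0$.

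Combining the two estimates with $C_\varphi:=\max\{M_0(1+2M),\,4\|\varphi\|_{L^1(\R)}/\pi\}$ gives $|(S\varphi)(x)|\le C_\varphi/(1+|x|)$ on all of $\R$, and continuity is inherited from the two regions (the subtraction formula is in fact valid globally, so no matching issue arises at $|x|=2M$). The only delicate step is the use of the oddness of $\eta(\tau-x)/(\tau-x)$ to convert the principal value into two ordinary integrals; everything else reduces to routine dominated convergence, so there is no real technical obstacle.
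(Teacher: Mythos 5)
Your argument is correct in substance, and it takes a genuinely different (more self-contained) route than the paper: the paper does not prove the lemma at all but simply cites Privalov's theorem from Stein's book for the continuity of $S\varphi$ and an exercise in Grafakos for the pointwise decay, whereas you give the standard elementary proof from scratch. Your two ingredients --- the far-zone bound $|(S\varphi)(x)|\le\|\varphi\|_{L^1(\R)}/\bigl(\pi(|x|-M)\bigr)$ for $|x|>2M$, where the kernel is nonsingular, and the global subtraction identity obtained from the oddness of $\eta(\tau-x)/(\tau-x)$ about $\tau=x$, which turns the principal value into two absolutely convergent integrals dominated respectively by $\|\varphi'\|_\infty\chi_{[-1,1]}(\tau-x)$ and by a constant multiple of $|\varphi(\tau)|$ --- do give both the uniform bound $M_0$ and, via dominated convergence, continuity on all of $\R$; in fact your proof only uses $\varphi\in C_c^1(\R)$, so it is slightly more general and makes the lemma independent of outside references, at the cost of a page of routine estimates the paper avoids. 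One small bookkeeping slip: to pass from $2\|\varphi\|_{L^1(\R)}/(\pi|x|)$ on $\{|x|>2M\}$ to $C_\varphi/(1+|x|)$ with the constant $4\|\varphi\|_{L^1(\R)}/\pi$ you implicitly use $|x|\ge 1$, so if $2M<1$ the range $2M<|x|<1$ is not covered by your stated choice of $C_\varphi$; this is harmless, since you may assume $M\ge 1$ from the outset, or use the fact (which you yourself note) that the subtraction formula holds globally, so $M_0$ bounds $|S\varphi|$ on all of $\R$ and $C_\varphi=\max\bigl\{2M_0,\,M_0(1+2M),\,4\|\varphi\|_{L^1(\R)}/\pi\bigr\}$ works.
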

%%%%%%%%%%%%%%%%%%%%%%%%%%%%%%%%%%%%%%%%%%%%%%%%%%%%%%%%%%%%%%%%%%%%%%%%%%
\begin{proof}
The continuity of $S\varphi$ is a consequence of the Privalov theorem
(see, e.g., \cite[Chap.~II, Section~6.9]{S70}). For the pointwise estimate
for $S\varphi$, see e.g. \cite[Exercise~4.1.2(a)]{G08}.
\qed
\end{proof}
%%%%%%%%%%%%%%%%%%%%%%%%%%%%%%%%%%%%%%%%%%%%%%%%%%%%%%%%%%%%%%%%%%%%%%%%%%
Assume that $\alpha,\beta\in\{1,\dots,N\}$ and let $a_{\alpha\beta}$ denote the
$(\alpha,\beta)$-entry of a matrix function $a\in L_{N\times N}^\infty(\R)$.
%%%%%%%%%%%%%%%%%%%%%%%%%%%%%%%%%%%%%%%%%%%%%%%%%%%%%%%%%%%%%%%%%%%%%%%%%%%
\begin{lemma}\label{le:verification}
Let $\varphi\in C_c^\infty(\R)$. Suppose $a_l,a_r\in AP_{N\times N}^0$,
$a_0\in [C_0]_{N\times N}$, and
\[
a=(1-u)a_l+ua_r+a_0.
\]
Then
\begin{enumerate}[(a)]
\item
there exists a sequence $\{h_m\}_{m=1}^\infty$ such that
$h_m\to+\infty$ as $m\to\infty$ and  $w$, $\{w_m\}_{m=1}^\infty$
given by
%%%
\begin{equation}\label{eq:verification-1}
w:=\big((a_r)_{\alpha\beta}P+Q\big)\varphi,
\quad
w_m:=V_{-h_m}(a_{\alpha\beta}P+Q)V_{h_m}\varphi
\end{equation}
%%%
or
%%%
\begin{equation}\label{eq:verification-2}
w:=\big(\overline{(a_r)_{\alpha\beta}}P+Q\big)\varphi,
\quad
w_m:=V_{-h_m}(\overline{a_{\alpha\beta}}P+Q)V_{h_m}\varphi,
\end{equation}
%%%
where $\alpha,\beta\in\{1,\dots,N\}$, satisfy hypotheses {\rm(i)}
and {\rm(ii)} of Lemma~{\rm\ref{le:key}};

\item
there exists a sequence $\{h_m\}_{m=1}^\infty$ such that
$h_m\to-\infty$ as $m\to\infty$ and $w$, $\{w_m\}_{m=1}^\infty$
given by
\[
w:=\big((a_l)_{\alpha\beta}P+Q\big)\varphi,
\quad
w_m:=V_{-h_m}(a_{\alpha\beta}P+Q)V_{h_m}\varphi
\]
or
\[
w:=\big(\overline{(a_l)_{\beta\alpha}}P+Q\big)\varphi,
\quad
w_m:=V_{-h_m}(\overline{a_{\beta\alpha}}P+Q)V_{h_m}\varphi,
\]
where $\alpha,\beta\in\{1,\dots,N\}$, satisfy hypotheses {\rm(i)}
and {\rm(ii)} of Lemma~{\rm\ref{le:key}}.
\end{enumerate}
\end{lemma}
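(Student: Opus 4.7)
The plan is to use translation invariance of the Cauchy singular integral to express $w_m$ in closed form, then exploit Kronecker's theorem (Lemma~\ref{le:Kronecker}) to select a sequence $\{h_m\}$ compatible with the almost-periodic representative at $+\infty$. Since $S$ commutes with every translation $V_h$, so do $P$ and $Q$, and a direct calculation gives $V_{-h}(bI)V_h\varphi=b(\cdot+h)\varphi$ for any $b\in L^\infty(\R)$. Combining these identities yields
\[
w_m=V_{-h_m}(a_{\alpha\beta}P+Q)V_{h_m}\varphi=\bigl(a_{\alpha\beta}(\cdot+h_m)P+Q\bigr)\varphi.
\]
Applying Lemma~\ref{le:Kronecker} to the finite collection $\{(a_r)_{\alpha\beta}:1\le\alpha,\beta\le N\}\subset AP^0$ with the $+\infty$ option, I obtain a sequence along which $\|(a_r)_{\alpha\beta}(\cdot+h_m)-(a_r)_{\alpha\beta}\|_{L^\infty(\R)}\to 0$ as $m\to\infty$.

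To verify hypothesis (i) of Lemma~\ref{le:key}, I use the Sarason decomposition $a=(1-u)a_l+ua_r+a_0$ to write
\begin{align*}
a_{\alpha\beta}(x+h_m)-(a_r)_{\alpha\beta}(x)
&=\bigl(1-u(x+h_m)\bigr)\bigl[(a_l)_{\alpha\beta}(x+h_m)-(a_r)_{\alpha\beta}(x+h_m)\bigr]\\
&\quad+(a_0)_{\alpha\beta}(x+h_m)+\bigl[(a_r)_{\alpha\beta}(x+h_m)-(a_r)_{\alpha\beta}(x)\bigr].
\end{align*}
On any compact $K\subset\R$, as $h_m\to+\infty$ the factor $1-u(x+h_m)$ tends to zero uniformly (since $u\in C(\overline{\R})$ with $u(+\infty)=1$) while $(a_l-a_r)_{\alpha\beta}$ is bounded; the term $(a_0)_{\alpha\beta}(x+h_m)$ tends to zero uniformly because $a_0\in[C_0]_{N\times N}$; and the last bracket tends to zero uniformly on $\R$ by the Kronecker selection. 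Hence $a_{\alpha\beta}(\cdot+h_m)\to(a_r)_{\alpha\beta}$ uniformly on compact subsets of $\R$. Since $P\varphi$ is continuous by Lemma~\ref{le:Privalov} and
\[
w_m(x)-w(x)=\bigl[a_{\alpha\beta}(x+h_m)-(a_r)_{\alpha\beta}(x)\bigr](P\varphi)(x),
\]
this gives pointwise convergence on $\R$ together with uniform convergence on every closed segment of $\R_+$.

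Hypothesis (ii) is a decay estimate. By Lemma~\ref{le:Privalov}, $|(S\varphi)(x)|\le C_\varphi/(1+|x|)$, and the compact support of $\varphi$ yields the same bound (with a larger constant) for $\varphi$ itself, hence $|(P\varphi)(x)|,\,|(Q\varphi)(x)|\le C/(1+|x|)$ for some $C>0$. Since $\|a_{\alpha\beta}(\cdot+h_m)\|_\infty\le\|a\|_\infty$ and $(a_r)_{\alpha\beta}\in L^\infty(\R)$, there exists $C'>0$ independent of $m$ with $|w_m(x)|,\,|w(x)|\le C'/(1+|x|)$.

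For the conjugated variant in (a), I observe that $\overline{a}=(1-u)\overline{a_l}+u\,\overline{a_r}+\overline{a_0}$ (since $u$ is real-valued) and the entries of $\overline{a_r}$ remain almost-periodic polynomials, so the preceding argument applies verbatim with $a$ replaced by $\overline{a}$. Part (b) is identical after exchanging the roles of $a_l$ and $a_r$: the sequence $h_m\to-\infty$ is chosen via Lemma~\ref{le:Kronecker} applied to the entries of $a_l$, and now the factor $u(x+h_m)$ tends to zero, isolating the $a_l$-component. The conjugate-transpose case uses $(a^*)_{\alpha\beta}=\overline{a_{\beta\alpha}}$ and that the representative-at-$\pm\infty$ maps are $C^*$-algebra homomorphisms, so the left representative of $a^*$ is $a_l^*$, whose $(\alpha,\beta)$-entry is $\overline{(a_l)_{\beta\alpha}}$. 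The only point requiring care is the bookkeeping of the three terms of the Sarason decomposition against the Kronecker selection; no step presents a genuine obstacle.
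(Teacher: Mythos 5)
Your proposal is correct and follows essentially the same route as the paper's proof: express $w_m=(a_{\alpha\beta}(\cdot+h_m)P+Q)\varphi$ via translation invariance of $S$, choose $\{h_m\}$ by the Kronecker-type Lemma~\ref{le:Kronecker}, use the rearranged Sarason decomposition $a=(1-u)(a_l-a_r)+a_0+a_r$ to get locally uniform convergence for hypothesis (i), and use the Privalov-type decay bound of Lemma~\ref{le:Privalov} (together with the compact support of $\varphi$) for hypothesis (ii), with the conjugate and conjugate-transpose variants handled verbatim. No gaps of substance; your treatment of the decay constant for $\varphi$ itself is in fact slightly more careful than the paper's.
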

%%%%%%%%%%%%%%%%%%%%%%%%%%%%%%%%%%%%%%%%%%%%%%%%%%%%%%%%%%%%%%%%%%%%%%%%%%%
\begin{proof}
(a) By Lemma~\ref{le:Kronecker}, there exists a sequence $\{h_m\}_{m=1}^\infty$
such that
%%%
\begin{equation}\label{eq:verification-3}
\lim_{m\to\infty}\|a_r(\cdot+h_m)-a_r(\cdot)\|_{L_{N\times N}^\infty(\R)}=0.
\end{equation}
%%%
Fix $\alpha,\beta\in\{1,\dots,N\}$ and consider the pair given in \eqref{eq:verification-1}.
It is easy to see that for $m\in\N$ and $x\in\R$,
%%%
\begin{equation}\label{eq:verification-4}
w_m(x)
=(V_{-h_m}a_{\alpha\beta}V_{h_m}P\varphi)(x)+(Q\varphi)(x)
=
a_{\alpha\beta}(x+h_m)(P\varphi)(x)+(Q\varphi)(x).
\end{equation}
%%%
From Lemma~\ref{le:Privalov} it follows that $P\varphi,Q\varphi\in C(\R)$
and there exists a constant $C_\varphi>0$ such that
%%%
\begin{equation}\label{eq:verification-5}
|(P\varphi)(x)|\le\frac{\widetilde{C}_\varphi}{1+|x|},
\quad
|(Q\varphi)(x)|\le\frac{\widetilde{C}_\varphi}{1+|x|},
\end{equation}
%%%
where $\widetilde{C}_\varphi:=(C_\varphi+\|\varphi\|_\infty)/2$. From
\eqref{eq:verification-4}--\eqref{eq:verification-5} it follows that for
$m\in\N$ and $x\in\R$,
\[
|w_m(x)|\le\frac{\|a_{\alpha\beta}\|_\infty \widetilde{C}_\varphi}{1+|x|},
\quad
|w(x)|\le\frac{\|(a_r)_{\alpha\beta}\|_\infty\widetilde{C}_\varphi}{1+|x|}.
\]
These inequalities mean that hypothesis (ii) of Lemma~\ref{le:key}
holds for $w$, $w_m$ given by
\eqref{eq:verification-1} with $\alpha,\beta\in\{1,\dots,N\}$.

From \eqref{eq:verification-4} and the representation
\[
a=(1-u)(a_l-a_r)+a_0+a_r
\]
we obtain for every $m\in\N$ and every $x\in\R$,
%%%
\begin{align}
|w_m(x)- w(x)|
= &
|a_{\alpha\beta}(x+h_m)-(a_r)_{\alpha\beta}(x)|\,|(P\varphi)(x)|
\nonumber
\\
\le &
\big(|1-u(x+h_m)|+|(a_0)_{\alpha\beta}(x+h_m)|
+|(a_r)_{\alpha\beta}(x+h_m)-(a_r)_{\alpha\beta}(x)|\big)|(P\varphi)(x)|.
\label{eq:verification-6}
\end{align}
%%%
Let $J\subset\R$ be a closed segment. Since $1-u(+\infty)=0$ and
$(a_0)_{\alpha\beta}(+\infty)=0$, we have
%%%
\begin{equation}\label{eq:verification-7}
\lim_{m\to\infty}\sup_{x\in J}|1-u(x+h_m)|=0,
\quad
\lim_{k\to\infty}\sup_{x\in J}|(a_0)_{\alpha\beta}(x+h_m)|=0.
\end{equation}
%%%
From \eqref{eq:verification-3} we also have
%%%
\begin{equation}\label{eq:verification-8}
\lim_{m\to\infty}\sup_{x\in J}|(a_r)_{\alpha\beta}(x+h_m)-(a_r)_{\alpha\beta}(x)|=0.
\end{equation}
%%%
The first inequality in \eqref{eq:verification-5} yields
%%%
\begin{equation}\label{eq:verification-9}
\sup_{x\in J}|(P\varphi)(x)|\le\widetilde{C}_\varphi\sup_{x\in J}\frac{1}{1+|x|}<\infty.
\end{equation}
%%%
From \eqref{eq:verification-6}--\eqref{eq:verification-9} we deduce that
\[
\lim_{m\to\infty}\sup_{x\in J}|w_m(x)-w(x)|=0,
\]
which finishes the verification of hypothesis (i) of Lemma~\ref{le:key}
for $w$, $w_m$ given by
\eqref{eq:verification-1} with $\alpha,\beta\in\{1,\dots,N\}$. The proof
for $w$, $w_m$ given by \eqref{eq:verification-2} is similar.
Part (a) is proved. The proof of part (b) is analogous.
\qed
\end{proof}
%%%%%%%%%%%%%%%%%%%%%%%%%%%%%%%%%%%%%%%%%%%%%%%%%%%%%%%%%%%%%%%%%%%%%%%%%%%
\subsection{Proof of Theorem~\ref{th:main}}
(a) The idea of the proof is borrowed from \cite[Theorem~6.5]{RS08}.
Since the operator $aP+Q$ is Fredholm on $L_N^{p(\cdot)}(\R)$,
its adjoint operator $(aP+Q)^*$ is Fredholm on the dual space in view of
Theorem~\ref{th:Fredholm-duality}. From Corollary~\ref{co:adjoints-sio} and
Lemma~\ref{le:relations-sio} it follows that
\[
(aP+Q)^*=Pa^*I+Q=A_1(a^*P+Q)A_2,
\]
where the operators $A_1:=I+Pa^*Q$ and $A_2:=I-Qa^*P$ are
invertible on $L_N^{p'(\cdot)}(\R)$. From this equality and
Lemma~\ref{le:Atkinson} we deduce that the operator $a^*P+Q$ is
Fredholm on $L_N^{p'(\cdot)}(\R)$. Therefore, due to
Theorem~\ref{th:regularization}, the operator $A:=aP+Q$ admits a
left regularizer on $L_N^{p(\cdot)}(\R)$ and the operator
$A':=a^*P+Q$ admits a left regularizer on $L_N^{p'(\cdot)}(\R)$.
That is, there exist operators $B\in\cB(L_N^{p(\cdot)}(\R))$,
$K\in\cK(L_N^{p(\cdot)}(\R))$ and $B'\in\cB(L_N^{p'(\cdot)}(\R))$,
$K'\in\cK(L_N^{p'(\cdot)}(\R))$ such that
%%%
\begin{equation}\label{eq:main-1}
BA-K=I,
\quad
B'A'-K'=I.
\end{equation}
%%%
Since $a\in SAP_{N\times N}$, there exist $a_l,a_r\in AP_{N\times N}$
and $a_0\in [C_0]_{N\times N}$ such that
%%%
\begin{equation}\label{eq:main-2}
a=(1-u)a_l+ua_r+a_0.
\end{equation}
%%%
By the definition of $AP$, there exist sequences
$\{a_l^{(j)}\}_{j=1}^\infty,\{a_r^{(j)}\}_{j=1}^\infty\subset AP_{N\times N}^0$
such that
%%%
\begin{equation}\label{eq:main-3}
\lim_{j\to\infty}\|a_l^{(j)}-a_l\|_{L_{N\times N}^\infty(\R)}=0,
\quad
\lim_{j\to\infty}\|a_r^{(j)}-a_r\|_{L_{N\times N}^\infty(\R)}=0.
\end{equation}
%%%
Let $a_j:=(1-u)a_l^{(j)}+ua_r^{(j)}+a_0$ and
\[
A_j:=a_jP+Q,
\quad
A_j':=a_j^*P+Q,
\quad
R_j:=a_r^{(j)}P+Q,
\quad
R_j':=(a_r^{(j)})^*P+Q.
\]
Put
\[
J:=\left[\liminf_{x\to+\infty}p(x),\limsup_{x\to+\infty}p(x)\right],
\quad
J':=\left[\liminf_{x\to+\infty}p'(x),\limsup_{x\to+\infty}p'(x)\right].
\]
It is well known that the norm of the operator $S$ on the standard
Lebesgue spaces is calculated by
\[
\|S\|_{\cB(L^q(\R))}=\left\{\begin{array}{lll}
\displaystyle
\tan\frac{\pi}{2q} &\mbox{if}& 1<q\le 2,
\\[3mm]
\displaystyle
\cot\frac{\pi}{2q} &\mbox{if}& 2\le q< \infty
\end{array}\right.
\]
(see, e.g., \cite[Chap.~13, Theorem~1.3]{GK92}). Hence
\[
\sup_{q\in J\cup J'}\max\left\{\|P\|_{L_N^q(\R))},\|Q\|_{L_N^q(\R))}\right\}=:M<\infty.
\]
If we denote $R:=a_rP+Q$ and $R':=P+a_r^*Q$, then
%%%
\begin{align}
\label{eq:main-4}
&
\sup_{q\in J}\|R-R_j\|_{\cB(L_N^q(\R))}
\le
C_NM\|a_r-a_r^{(j)}\|_{L_{N\times N}^\infty(\R)},
\\
\label{eq:main-5}
&
\sup_{q'\in J'}\|R'-R_j'\|_{\cB(L_N^{q'}(\R))}
\le
C_NM\|a_r-a_r^{(j)}\|_{L_{N\times N}^\infty(\R)},
\end{align}
%%%
where the constant $C_N>0$ depends only on $N$.
From \eqref{eq:main-2}--\eqref{eq:main-3} it follows that
%%%
\begin{align}
\label{eq:main-6}
&
\|A-A_j\|_{\cB(L_N^{p(\cdot)}(\R))}<\frac{1}{2\|B\|_{\cB(L_N^{p(\cdot)}(\R))}},
\\
\label{eq:main-7}
&
\|A'-A_j'\|_{\cB(L_N^{p'(\cdot)}(\R))}<\frac{1}{2\|B'\|_{\cB(L_N^{p'(\cdot)}(\R))}}
\end{align}
%%%
for sufficiently large $j$. Further, from
\eqref{eq:main-3}--\eqref{eq:main-5} we also deduce that
%%%
\begin{align}
\label{eq:main-8}
&
\sup_{q\in J}\|R-R_j\|_{\cB(L_N^q(\R))}
<\frac{1}{2\|B\|_{\cB(L_N^{p(\cdot)}(\R))}},
\\
\label{eq:main-9}
&
\sup_{q'\in J'}\|R'-R_j'\|_{\cB(L_N^{q'}(\R))}
<\frac{1}{2\|B'\|_{\cB(L_N^{p'(\cdot)}(\R))}}
\end{align}
%%%
for sufficiently large $j$. Fix $j$ such that all inequalities
\eqref{eq:main-6}--\eqref{eq:main-9} are fulfilled simultaneously.

From the first equality in \eqref{eq:main-1} and \eqref{eq:main-6} it follows
that for every $f\in L_N^{p(\cdot)}(\R)$,
\[
\begin{split}
\|f\|_{L_N^{p(\cdot)}(\R)}
&\le
\|B\|_{\cB(L_N^{p(\cdot)}(\R))}\|Af\|_{L_N^{p(\cdot)}(\R)}+\|Kf\|_{L_N^{p(\cdot)}(\R)}
\\
&\le
\|B\|_{\cB(L_N^{p(\cdot)}(\R))}
\big(\|A_jf\|_{L_N^{p(\cdot)}(\R)}+\|Af-A_jf\|_{L_N^{p(\cdot)}(\R)}\big)+
\|Kf\|_{L_N^{p(\cdot)}(\R)}
\\
&\le
\|B\|_{\cB(L_N^{p(\cdot)}(\R))}\|A_jf\|_{L_N^{p(\cdot)}(\R)}+\frac{1}{2}\|f\|_{L_N^{p(\cdot)}(\R)}+\|Kf\|_{L_N^{p(\cdot)}(\R)}.
\end{split}
\]
Hence for all $f\in L_N^{p(\cdot)}(\R)$,
%%%
\begin{equation}\label{eq:main-10}
\|f\|_{L_N^{p(\cdot)}(\R)}\le 2\|B\|_{\cB(L_N^{p(\cdot)}(\R))}\|A_jf\|_{L_N^{p(\cdot)}(\R)}+2\|Kf\|_{L_N^{p(\cdot)}(\R)}.
\end{equation}
%%%
Analogously, from the second equality in \eqref{eq:main-1}
and \eqref{eq:main-7} we obtain for $g\in L_N^{p'(\cdot)}(\R)$,
%%%
\begin{equation}\label{eq:main-11}
\|g\|_{L_N^{p'(\cdot)}(\R)}
\le
2\|B'\|_{\cB(L_N^{p'(\cdot)}(\R))}\|A_j'g\|_{L_N^{p'(\cdot)}(\R)}+
2\|K'g\|_{L_N^{p'(\cdot)}(\R)}.
\end{equation}
%%%

Let $\psi_n$ be as in Lemma~\ref{le:convergence-compact-multiplication}.
It is clear that $\Psi_n:=\operatorname{diag}\{\psi_nI,\dots,\psi_nI\}$
is an idempotent, that is, $\Psi_n^2=\Psi_n$. By Lemma~\ref{le:convergence-compact-multiplication}(b),
there exists an $n\in\N$ such that
\[
\|K\Psi_n\|_{\cB(L_N^{p(\cdot)}(\R))}\le \frac{1}{4},
\quad
\|K'\Psi_n\|_{\cB(L_N^{p'(\cdot)}(\R))}\le \frac{1}{4}.
\]
Hence for all $f\in L_N^{p(\cdot)}(\R)$,
%%%
\begin{equation}\label{eq:main-12}
\|K\Psi_nf\|_{L_N^{p(\cdot)}(\R)}
=
\|K\Psi_n^2f\|_{L_N^{p(\cdot)}(\R)}
\le
\|K\Psi_n\|_{\cB(L_N^{p(\cdot)}(\R))}\|\Psi_nf\|_{L_N^{p(\cdot)}(\R)}
\le\frac{1}{4}\|\Psi_nf\|_{L_N^{p(\cdot)}(\R)},
\end{equation}
%%%
and similarly
%%%
\begin{equation}\label{eq:main-13}
\|K'\Psi_ng\|_{L_N^{p'(\cdot)}(\R)}
\le
\frac{1}{4}\|\Psi_ng\|_{L_N^{p'(\cdot)}(\R)}.
\end{equation}
%%%
From \eqref{eq:main-10} and \eqref{eq:main-12} it follows that for all $f\in L_N^{p(\cdot)}(\R)$,
%%%
\begin{equation}\label{eq:main-14}
\|\Psi_nf\|_{L_N^{p(\cdot)}(\R)}\le 4\|B\|_{\cB(L_N^{p(\cdot)}(\R))}\|A_j\Psi_n f\|_{L_N^{p(\cdot)}(\R)}
\end{equation}
%%%
In the same way, from \eqref{eq:main-11} and \eqref{eq:main-13}
we obtain for all $g\in L_N^{p'(\cdot)}(\R)$,
%%%
\begin{equation}\label{eq:main-15}
\|\Psi_n g\|_{L_N^{p'(\cdot)}(\R)}\le 4\|B'\|_{\cB(L_N^{p'(\cdot)}(\R))}\|A_j'\Psi_n g\|_{L_N^{p'(\cdot)}(\R)}.
\end{equation}
%%%

Let $\varphi\in [C_c^\infty(\R)]_N$. In view of
Lemma~\ref{le:verification}(a), there exists a sequence
$\{h_m\}_{m=1}^\infty$ such that $h_m\to+\infty$ as $m\to\infty$ and
each of the functions given by
\[
w_{\alpha\beta}:=\big((a_r^{(j)})_{\alpha\beta}P+Q\big)\varphi_\beta,
\quad
(w_{\alpha\beta})_m:=V_{-h_m}((a_j)_{\alpha\beta}P+Q)V_{h_m}\varphi_\beta
\]
and
\[
w_{\alpha\beta}':=\big(\overline{(a_r^{(j)})_{\beta\alpha}}P+Q\big)\varphi_\beta,
\quad
(w_{\alpha\beta})_m':=V_{-h_m}(\overline{(a_j)_{\beta\alpha}}P+Q)V_{h_m}\varphi_\beta
\]
%%%
for $\alpha,\beta\in\{1,\dots,N\}$ satisfies hypotheses (i) and (ii)
of Lemma~\ref{le:key}.

For $h\in\R$, let the translation operator $V_h$ be defined on $L_N^{p(\cdot)}(\R)$
and on $L_N^{p'(\cdot)}(\R)$ elementwise (although it may be unbounded on these spaces).
It is easy to see that there exists an $m_0\in\N$ such that
%%%
\begin{equation}\label{eq:main-16}
\Psi_nV_{h_m}\varphi=V_{h_m}\varphi \quad\text{for all}\quad m\ge m_0.
\end{equation}
%%%
Then from \eqref{eq:main-14} and \eqref{eq:main-16} it follows that
for all $\varphi\in [C_c^\infty(\R)]_N$ and all $m\ge m_0$,
%%%
\begin{align}
\|V_{h_m}\varphi\|_{L_N^{p(\cdot)}(\R)}
&=
\|\Psi_nV_{h_m}\varphi\|_{L_N^{p(\cdot)}(\R)}
\nonumber
\\
&\le
4\|B\|_{\cB(L_N^{p(\cdot)}(\R))}\|A_j\Psi_nV_{h_m}\varphi\|_{L_N^{p(\cdot)}(\R)}
\nonumber
\\
&=
4\|B\|_{\cB(L_N^{p(\cdot)}(\R))}\|A_jV_{h_m}\varphi\|_{L_N^{p(\cdot)}(\R)}
\nonumber
\\
&=
4\|B\|_{\cB(L_N^{p(\cdot)}(\R))}\|V_{h_m}(V_{-h_m}A_jV_{h_m}\varphi)\|_{L_N^{p(\cdot)}(\R)}.
\label{eq:main-17}
\end{align}
%%%
Analogously, from \eqref{eq:main-15} and \eqref{eq:main-16} we get
for all $\varphi\in [C_c^\infty(\R)]_N$ and all $m\ge m_0$,
%%%
\begin{equation}\label{eq:main-18}
\|V_{h_m}\varphi\|_{L_N^{p'(\cdot)}(\R)}
\le
4\|B'\|_{\cB(L_N^{p'(\cdot)}(\R))}\|V_{h_m}(V_{-h_m}A_j'V_{h_m}\varphi)\|_{L_N^{p'(\cdot)}(\R)}.
\end{equation}
%%%

Since the sequence $\{p(h_m)\}_{m=1}^\infty$ is bounded,
$p_-\le p(h_m)\le p_+$ for all $m\in\N$, there exists its convergent
subsequence $\{ p(h_{m_k}) \}_{k=1}^\infty$. Let
\[
q_r:=\lim_{k\to\infty}p(h_{m_k}).
\]
It is clear that $q_r\in J$. Taking into account \eqref{eq:exponents} we also see
that
\[
\lim_{k\to\infty}p'(h_{m_k})=q_r/(q_r-1)=:q_r'\in J'.
\]
Applying Lemma~\ref{le:key} to
\[
w_{\alpha\beta}:=\big((a_r^{(j)})_{\alpha\beta}P+Q\big)\varphi_\beta,
\quad
(w_{\alpha\beta})_{m_k}:=V_{-h_{m_k}}\big((a_j)_{\alpha\beta}P+Q\big)V_{h_{m_k}}\varphi_\beta
\]
with $\alpha,\beta\in\{1,\dots,N\}$, we obtain
\[
\lim_{k\to\infty}
\|V_{h_{m_k}}(V_{-h_{m_k}}(A_j)_{\alpha\beta}V_{h_{m_k}}\varphi_\beta\|_{p(\cdot)}
=
\lim_{k\to\infty}\|V_{h_{m_k}}(w_{\alpha\beta})_{m_k}\|_{p(\cdot)}
=
\|w_{\alpha,\beta}\|_{q_r}=\|(R_j)_{\alpha\beta}\varphi_\beta\|_{q_r}.
\]
Then
%%%
\begin{equation}\label{eq:main-19}
\lim_{k\to\infty}\|V_{h_{m_k}}(V_{-h_{m_k}}A_jV_{h_{m_k}}\varphi)\|_{L_N^{p(\cdot)}(\R)}
=
\|R_j\varphi\|_{L_N^{q_r}(\R)}.
\end{equation}
%%%
Analogously, applying Lemma~\ref{le:key} to
\[
w_{\alpha\beta}':=\big(\overline{(a_r^{(j)})_{\beta\alpha}}P+Q\big)\varphi_\beta,
\quad
(w_{\alpha\beta})_{m_k}':=V_{-h_{m_k}}(\overline{(a_j)_{\beta\alpha}}P+Q)V_{h_{m_k}}\varphi_\beta
\]
with $\alpha,\beta\in\{1,\dots,N\}$ on the dual space, we get
%%%
\begin{equation}\label{eq:main-20}
\lim_{k\to\infty}\|V_{h_{m_k}}(V_{-h_{m_k}}A_j'V_{h_{m_k}}\varphi)\|_{L_N^{p'(\cdot)}(\R)}
=
\|R_j'\varphi\|_{L_N^{q_r'}(\R)}.
\end{equation}
%%%
Finally, applying Lemma~\ref{le:key} to the constant sequences $w_k=\varphi_\beta$ and $w=\varphi_\beta$
for all $\beta\in\{1,\dots,N\}$, we get
%%%
\begin{equation}\label{eq:main-21}
\lim_{k\to\infty}\|V_{h_{m_k}}\varphi\|_{L_N^{p(\cdot)}(\R)}=\|\varphi\|_{L_N^{q_r}(\R)},
\quad
\lim_{k\to\infty}\|V_{h_{m_k}}\varphi\|_{L_N^{p'(\cdot)}(\R)}=\|\varphi\|_{L_N^{q_r'}(\R)}.
\end{equation}
%%%
Inequalities \eqref{eq:main-17} and \eqref{eq:main-18}, in particular, imply that
for all $k\in\N$ and $\varphi\in [C_c^\infty(\R)]_N$,
%%%
\begin{align*}
&
\|V_{h_{m_k}}\varphi\|_{L_N^{p(\cdot)}(\R)}
\le
4\|B\|_{\cB(L_N^{p(\cdot)}(\R))}\|V_{h_{m_k}}(V_{-h_{m_k}}A_jV_{h_{m_k}}\varphi)\|_{L_N^{p(\cdot)}(\R)},
\\
&
\|V_{h_{m_k}}\varphi\|_{L_N^{p'(\cdot)}(\R)}
\le
4\|B'\|_{\cB(L_N^{p'(\cdot)}(\R))}\|V_{h_{m_k}}(V_{-h_{m_k}}A_j'V_{h_{m_k}}\varphi)\|_{L_N^{p'(\cdot)}(\R)}.
\end{align*}
%%%
Passing in these inequalities to the limit as $k\to\infty$ and taking into
account equalities \eqref{eq:main-19}--\eqref{eq:main-21}, we obtain
for all $\varphi\in [C_c^\infty(\R)]_N$,
%%%
\begin{align}
\label{eq:main-22}
&
\|\varphi\|_{L_N^{q_r}(\R)}
\le
4\|B\|_{\cB(L_N^{p(\cdot)}(\R))}
\|R_j\varphi\|_{L_N^{q_r}(\R)},
\\
\label{eq:main-23}
&
\|\varphi\|_{L_N^{q_r'}(\R)}
\le
4\|B'\|_{\cB(L_N^{p'(\cdot)}(\R))}
\|R_j'\varphi\|_{L_N^{q_r'}(\R)}.
\end{align}
%%%
From inequalities \eqref{eq:main-8} and \eqref{eq:main-22} we obtain
\[
\begin{split}
\|\varphi\|_{L_N^{q_r}(\R)}
\le &
4\|B\|_{\cB(L_N^{p(\cdot)}(\R))}\|R\varphi\|_{L_N^{q_r}(\R)}
+
4\|B\|_{\cB(L_N^{p(\cdot)}(\R))}
\|R-R_j\|_{\cB(L_N^{q_r}(\R))}\|\varphi\|_{L_N^{q_r}(\R)}
\\
\le &
4\|B\|_{\cB(L_N^{p(\cdot)}(\R))}\|R\varphi\|_{L_N^{q_r}(\R)}
+\frac{1}{2}\|\varphi\|_{L_N^{q_r}(\R)}.
\end{split}
\]
Hence, for all $\varphi\in [C_c^\infty(\R)]_N$,
%%%
\begin{equation}\label{eq:main-24}
\|\varphi\|_{L_N^{q_r}(\R)}
\le
8\|B\|_{\cB(L_N^{p(\cdot)}(\R))}\|R\varphi\|_{L_N^{q_r}(\R)}.
\end{equation}
%%%
Let $f\in L_N^{q_r}(\R)$ and $\{\varphi_k\}_{k=1}^\infty\subset [C_c^\infty(\R)]_N$
be a sequence such that
\[
\lim_{k\to\infty}\|f-\varphi_k\|_{L_N^{q_r}(\R)}=0.
\]
From this equality and \eqref{eq:main-24} it follows that
\[
\|f\|_{L_N^{q_r}(\R)}
=
\lim_{k\to\infty}\|\varphi_k\|_{L_N^{q_r}(\R)}
\le
8\|B\|_{\cB(L_N^{p(\cdot)}(\R))}\lim_{k\to\infty}\|R\varphi_k\|_{L_N^{q_r}(\R)}
\\
=
8\|B\|_{\cB(L_N^{p(\cdot)}(\R))}\|Rf\|_{L_N^{q_r}(\R)}.
\]
Therefore
%%%
\begin{equation}\label{eq:main-25}
0<\frac{1}{8\|B\|_{\cB(L_N^{p(\cdot)}(\R))}}\le \cJ(R;L_N^{q_r}(\R)).
\end{equation}
%%%
Arguing analogously and starting with \eqref{eq:main-9} and \eqref{eq:main-23},
we obtain
%%%
\begin{equation}\label{eq:main-26}
0<\frac{1}{8\|B'\|_{\cB(L_N^{p'(\cdot)}(\R))}}\le \cJ(R';L_N^{q_r'}(\R)).
\end{equation}

From Corollary~\ref{co:adjoints-sio} and Lemma~\ref{le:relations-sio} we obtain
\[
R^*=(a_rP+Q)^*=Pa_r^*I+Q=A_3R'A_4,
\]
where $A_3:=I+Pa_r^*Q$ and $A_4:=I-Qa_r^*P$ are invertible on
$L_N^{q_r'}(\R)$. From this equality,
Lemma~\ref{le:moduli-supermult} and Theorem~\ref{th:Kurbatov} it
follows that
%%%
\begin{align}
\cJ(R^*; L_N^{q_r'}(\R))
%&
\ge
\cJ(A_3;L_N^{q_r'}(\R))\cdot
\cJ(R';L_N^{q_r'}(\R))\cdot
\cJ(A_4;L_N^{q_r'}(\R))
=
\frac{\cJ(R';L_N^{q_r'}(\R))}
{\|A_3^{-1}\|_{\cB(L_N^{q_r'}(\R))}\|A_4^{-1}\|_{\cB(L_N^{q_r'}(\R))}}.
\label{eq:main-27}
\end{align}
%%%
From \eqref{eq:relations-sio-1} we see that
%%%
\begin{align}
\|A_3^{-1}\|_{\cB(L_N^{q_r'}(\R))}
&=
\|I-Pa_r^*Q\|_{\cB(L_N^{q_r'}(\R))}
\nonumber
\\
&\le
1+\|P\|_{\cB(L_N^{q_r'}(\R))}\|a_r^*I\|_{\cB(L_N^{q_r'}(\R))}\|Q\|_{\cB(L_N^{q_r'}(\R))}
\nonumber
\\
&\le
1+C_N\|a_r\|_{L_{N\times N}^\infty(\R)}M^2
\label{eq:main-28}
\end{align}
%%%
and analogously
%%%
\begin{equation}\label{eq:main-29}
\|A_4^{-1}\|_{\cB(L_N^{q_r'}(\R))}
\le
1+C_N\|a_r\|_{L_{N\times N}^\infty(\R)}M^2.
\end{equation}
%%%
Combining \eqref{eq:main-26}--\eqref{eq:main-29}, we arrive at
\[
\cJ(R^*; L_N^{q_r'}(\R))
\ge
\frac{\cJ(R';L_N^{q_r'}(\R))}
{\big(1+C_N\|a_r\|_{L_{N\times N}^\infty(\R)}M^2\big)^2}
\ge
\frac{\big(8\|B'\|_{\cB(L_N^{p'(\cdot)}(\R))}\big)^{-1}}
{\big(1+C_N\|a_r\|_{L_{N\times N}^\infty(\R)}M^2\big)^2}
=:
M_1>0.
\]
From this inequality and Lemma~\ref{le:injection-surjection-duality} we
conclude that
%%%
\begin{equation}\label{eq:main-30}
\cQ(R; L_N^{q_r}(\R))\ge M_1>0.
\end{equation}
%%%
Finally, inequalities \eqref{eq:main-25}, \eqref{eq:main-30} and
Theorem~\ref{th:Kurbatov} imply that the operator $R=a_rP+Q$
is invertible on the standard Lebesgue space $L_N^{q_r}(\R)$.
\qed
%%%%%%%%%%%%%%%%%%%%%%%%%%%%%%%%%%%%%%%%%%%%%%%%%%%%%%%%%%%%%%%%%%%%%%%%%%%

\textbf{Acknowledgement.}
The authors would like to thank the anonymous referee for pointing
out an inaccuracy in the first version of the paper.

%%%%%%%%%%%%%%%%%%%%%%%%%%%%%%%%%%%%%%%%%%%%%%%%%%%%%%%%%%%%%%%%%%%%%%%%%%%


\begin{thebibliography}{99}
\bibitem{A74}
T.~Apostol, Mathematical Analysis. 2nd ed.,
Addison-Wesley Publishing Company, Reading, Massachusetts, 1974.

\bibitem{BBK04}
M.~A.~Bastos, A.~Bravo, and Yu.~I.~Karlovich,
Convolution type operators with symbols generated by slowly oscillating and piecewise continuous matrix functions.
In: ``Operator theoretical methods and applications to mathematical physics".
Operator Theory: Advances and Applications 147 (2004), 151-–174.

\bibitem{BKS04}
M.~A.~Bastos, Yu.~I.~Karlovich, and B.~Silbermann
Toeplitz operators with symbols generated by slowly oscillating and
semi-almost periodic matrix functions.
\href{http://dx.doi.org/10.1112/S0024611504014819}
{Proc. London Math. Soc. (3) 89 (2004), 697-–737}.

\bibitem{BK97}
A. B\"ottcher and Yu. I. Karlovich,
Carleson Curves, Muckenhoupt Weights, and Toeplitz Operators.
Progress in Mathematics, \textbf{154}.
Birkh\"auser, Basel, 1997.

\bibitem{BKR00}
A.~B\"ottcher, Yu.~I.~Karlovich, and V.~S.~Rabinovich,
The method of limit operators for one-dimensional singular integrals
with slowly oscillating data.
\href{http://www.mathjournals.org/jot/2000-043-001/2000-043-001-008.pdf}
{J. Operator Theory 43 (2000), 171--198.}

\bibitem{BKS02}
A.~B\"ottcher, Yu.~I.~Karlovich, and I.~M.~Spitkovsky,
Convolution Operators and Factorization of Almost Periodic Matrix Functions.
Operator Theory: Advances and Applications 131. Birkh\"auser Verlag, Basel, 2002.

\bibitem{CFN03}
D.~Cruz-Uribe, A.~Fiorenza, C.~J.~Neugebauer,
The maximal function on variable $L^{p}$ spaces.
Ann. Acad. Sci. Fenn. Math. 28 (2003), 223-–238.

\bibitem{CFN04}
D.~Cruz-Uribe, A.~Fiorenza, C.~J.~Neugebauer,
Corrections to: ``The maximal function on variable $L^{p}$ spaces".
Ann. Acad. Sci. Fenn. Math. 29 (2004), 247-–249.

\bibitem{D04}
L.~Diening,
Maximal function on generalized Lebesgue spaces $L^{p(\cdot)}$.
Math. Inequal. Appl. 7 (2004), 245-–253.

\bibitem{D05}
L.~Diening,
Maximal function on Musielak-Orlicz spaces and generalized Lebesgue spaces.
\href{http://dx.doi.org/10.1016/j.bulsci.2003.10.003}
{Bull. Sci. Math.  129  (2005),  657--700}.

\bibitem{DR03}
L.~Diening and M.~R\r{u}\v zi\v cka,
Calder\'on-Zygmund operators on generalized Lebesgue spaces $L^{p(\cdot)}$
and problems related to fluid dynamics.
\href{http://dx.doi.org/10.1515/crll.2003.081}
{J. Reine Angew. Math. 563 (2003), 197--220}.

\bibitem{DHHR11}
L.~Diening, P.~Harjulehto, P., P.~H\"ast\"o, and M.~R\r{u}\v zi\v cka,
Lebesgue and Sobolev Spaces with Variable Exponents.
\href{http://dx.doi.org/10.1007/978-3-642-18363-8}
{Lecture Notes in Mathematics 2017. Springer, 2011}.

\bibitem{D01}
J.~Duoandikoetxea,
Fourier Analysis.
Graduate Studies in Mathematics 29.
American Mathematical Society, Providence, RI, 2001.

\bibitem{GK92}
I. Gohberg and N. Krupnik,
One-Dimensional Linear Singular Integral Equations.
Vols. 1 and 2.
Operator Theory: Advances and Applications  53--54.
Birkh\"auser Verlag, Basel, 1992.

\bibitem{G08}
L. Grafakos,
\href{http://dx.doi.org/10.1007/978-0-387-09432-8}
{Classical Fourier Analysis}.
Springer, New York, 2008.

\bibitem{H09}
P.~A.~H\"ast\"o,
Local-to-global results in variable exponent spaces.
Math. Res. Lett. 16 (2009), 263-–278.

\bibitem{K09}
A.~Yu.~Karlovich,
Singular integral operators on variable Lebesgue spaces
with radial oscillating weights.
In: ``Operator Algebras, Operator Theory and Applications".
\href{http://dx.doi.org/10.1007/978-3-0346-0174-0_9}
{Operator Theory: Advances and Applications 195 (2009), 185--212}.

\bibitem{K10}
A.~Yu.~Karlovich,
Singular integral operators on variable Lebesgue spaces over arbitrary Carleson curves.
In: ``Topics in Operator Theory. Operators, Matrices and Analytic Functions".
\href{http://dx.doi.org/10.1007/978-3-0346-0158-0_18}
{Operator Theory: Advances and Applications 202 (2010), 321--336}.

\bibitem{K11}
A.~Yu.~Karlovich,
Singular integral operators on Nakano
spaces with weights having finite sets of discontinuities.
Proceedings of ``Function Spaces IX". Banach Center Publications, to appear.
Preprint is available at {\tt
\href{http://arxiv.org/abs/1002.4813}{arXiv:1002.4813 [math.FA]}}

\bibitem{KL05}
A.~Yu.~Karlovich and A.~K.~Lerner,
Commutators of singular integrals on generalized $L^p$ spaces with
variable exponent.
\href{http://mat.uab.cat/pubmat/fitxers/download/FileType:pdf/FolderName:v49(1)/FileName:49105_03.pdf}
{Publ. Mat.  49  (2005),  111--125}.

\bibitem{K93}
Yu.~I.~Karlovich,
On the Haseman problem.
Demonstratio Math. 26 (1993), no. 3-4, 581-–595.

\bibitem{K95}
T. Kato,
Perturbation Theory for Linear Operators.
Reprint of the 1980 edition. Springer-Verlag, Berlin, 1995.

\bibitem{KS08}
V.~Kokilashvili and S.~Samko,
Boundedness of maximal operators and potential operators on Carleson curves in Lebesgue spaces with variable exponent.
\href{http://dx.doi.org/10.1007/s10114-008-6464-1}
{Acta Math. Sin. (Engl. Ser.)  24 (2008), 1775-–1800}.

\bibitem{KR91}
O.~Kov\'a{\v c}ik and J.~R\'akosn\'ik,
On spaces $L\sp {p(x)}$ and $W\sp {k,p(x)}$.
\href{http://hdl.handle.net/10338.dmlcz/102493}
{Czechoslovak Math. J. 41(116) (1991), no. 4, 592--618}.

\bibitem{Ku03}
L.~D.~Kudryavtsev,
A Course in Mathematical Analysis.
Vol. 2, 5th ed., Drofa, Moscow, 2003 (in Russian).

\bibitem{Ku99}
V.~G.~Kurbatov,
Functional-Differential Operators and Equations.
Mathematics and its Applications 473.
Kluwer Academic Publishers, Dordrecht, 1999.

\bibitem{L05}
A.~K.~Lerner,
Some remarks on the Hardy-Littlewood maximal function on variable $L^p$ spaces.
\href{http://dx.doi.org/10.1007/s00209-005-0818-5}
{Math. Z. 251 (2005), no. 3, 509–-521}.

\bibitem{L10}
A.~K.~Lerner,
On some questions related to the maximal operator on variable $L^p$ spaces.
\href{http://dx.doi.org/10.1090/S0002-9947-10-05066-X}
{Trans. Amer. Math. Soc. 362 (2010), 4229-–4242}.

\bibitem{N08}
A.~Nekvinda,
Maximal operator on variable Lebesgue spaces for almost monotone radial exponent.
\href{http://dx.doi.org/10.1016/j.jmaa.2007.04.047}
{J. Math.~Anal. Appl. 337 (2008), 1345-–1365}.

\bibitem{P80}
A.~Pietsch,
Operator Ideals.
North-Holland Mathematical Library 20.
North-Holland Publishing Co., Amsterdam, 1980.

\bibitem{Power80}
S.~C.~Power,
Fredholm Toeplitz operators and slow oscillation.
\href{http://dx.doi.org/10.4153/CJM-1980-081-2}
{Canad. J. Math. 32 (1980), 1058–-1071}.

\bibitem{R96}
V.~S.~Rabinovich,
Algebras of singular integral operators on compound
contours with nodes that are logarithmic whirl points.
\href{http://dx.doi.org/10.1070/IM1996v060n06ABEH000099}
{Izv. Math. 60 (1996) 1261--1292.}

\bibitem{RRS04}
V.~S.~Rabinovich, S.~Roch, and B.~ Silbermann,
Limit Operators and Their Applications in Operator Theory.
Operator Theory: Advances and Applications 150.
Birkh\"auser, Basel, 2004.

\bibitem{RS08}
V.~Rabinovich and S.~Samko,
Boundedness and Fredholmness of pseudodifferential operators in variable exponent spaces.
\href{http://dx.doi.org/10.1007/s00020-008-1566-9}
{Integral Equations and Operator Theory 60 (2008), 507--537.}

\bibitem{RS11}
V.~Rabinovich and S.~Samko,
Pseudodifferential operators approach
to singular integral operators in weighted variable exponent
Lebesgue spaces on Carleson curves.
\href{http://dx.doi.org/10.1007/s00020-010-1848-x}
{Integral Equations and Operator Theory 69 (2011), 405--444}.

\bibitem{RSS11}
S.~Roch, P.~A.~Santos, and B.~Silbermann,
Non-commutative Gelfand Theories.
\href{http://dx.doi.org/10.1007/978-0-85729-183-7}
{Universitext 2011. Springer, Berlin, 2011}.

\bibitem{Sarason77}
D.~Sarason,
Toeplitz operators with semi-almost periodic symbols.
\href{http://dx.doi.org/10.1215/S0012-7094-77-04415-5}
{Duke Math. J. 44 (1977), 357-–364}.

\bibitem{S70}
E.~M.~Stein,
Singular Integrals and Differentiability Properties of Functions.
Princeton University Press, Princeton, N.J., 1970.

\end{thebibliography}
\end{document}